\newtheorem{thm}{Theorem}[section]
\newtheorem{cor}[thm]{Corollary}
\newtheorem{lem}[thm]{Lemma}
\newtheorem{dfn}[thm]{Definition}
\newtheorem{rmk}[thm]{Remark}
\theoremstyle{definition}
\theoremstyle{remark}
\numberwithin{equation}{section}
\newcommand{\sm}{\left(\begin{smallmatrix}}
\newcommand{\esm}{\end{smallmatrix}\right)}
\newcommand{\mat}{\left(\begin{matrix}}
\newcommand{\emat}{\end{matrix}\right)}
\newcommand{\mbf}{\mathbf}
\def\CC{\mathbb{C}}
\def\HH{\mathbb{H}}
\def\NN{\mathbb{N}}
\def\QQ{\mathbb{Q}}
\def\RR{\mathbb{R}}
\def\ZZ{\mathbb{Z}}
\def\det{\mathrm{det}}
\def\m{\mathrm{mod}}
\def\re{\mathrm{Re}}
\def\reg{\mathrm{reg}}
\def\Tr{\mathrm{Tr}}
\def\PSL{\mathrm{PSL}}
\def\SL{\mathrm{SL}}
\begin{document}


\title{limits of traces of singular moduli}


\author{Dohoon Choi}
\author{Subong Lim}

\address{Department of Mathematics, Korea University, 145 Anam-ro, Seongbuk-gu, Seoul 02841, Republic of Korea}
\email{dohoonchoi@korea.ac.kr}	

\address{Department of Mathematics Education, Sungkyunkwan University, 25-2, Sungkyunkwan-ro, Jongno-gu, Seoul 03063, Republic of Korea}
\email{subong@skku.edu}

\keywords{Modular traces, Regularized $L$-functions, Eichler-Shimura cohomology theory}
\subjclass[2010]{
Primary:   11F37, 
Secondary: 11F30  
}
\thanks{
The authors were supported by Samsung Science and Technology Foundation
under Project SSTF-BA1301-11.
}

\begin{abstract}
Let $f$ and $g$ be weakly holomorphic modular functions on $\Gamma_0(N)$ with the trivial character. For an integer $d$, let $\Tr_d(f)$ denote the modular trace of $f$ of index $d$. Let $r$ be a rational number equivalent to $i\infty$ under the action of $\Gamma_0(4N)$.
In this paper, we prove that, when $z$ goes radially to $r$, the limit $Q_{\hat{H}(f)}(r)$ of the sum $H(f)(z) = \sum_{d>0}\Tr_d(f)e^{2\pi idz}$ is a special value of a regularized twisted $L$-function defined by $\Tr_d(f)$ for $d\leq0$. It is proved that the regularized $L$-function is meromorphic  on $\mathbb{C}$ and satisfies a certain functional equation.  Finally, under the assumption that $N$ is square free, we prove that if $Q_{\hat{H}(f)}(r)=Q_{\hat{H}(g)}(r)$ for all $r$ equivalent to $i \infty$ under the action of $\Gamma_0(4N)$, then $\Tr_d(f)=\Tr_d(g)$ for all integers $d$.
\end{abstract}

\maketitle

\section{Introduction}\label{section1}

The   $j$-invariant function is defined by
\[
j(z) = q^{-1} + 744 + 196884q +\cdots,
\]
for $z$ in the complex upper half plane $\HH$, where $q:= e^{2\pi iz}$.
Let $j_1 := j - 744$, which is the normalized Hauptmodul for the group $\Gamma = \PSL_2(\ZZ)$.
For an integer $d$ such that $d\equiv 0$ or  $1\ (\m\ 4)$, let $\mathcal{Q}_d$ be the set of integral binary quadratic forms
\[
Q(x,y) = ax^2 + bxy + cy^2
\]
with discriminant $d = b^2 - 4ac$. Let us assume that $Q$ is positive definite if $d<0$.
The group $\Gamma$ acts on $\mathcal{Q}_d$ by
\[
\left(Q \circ \sm \alpha& \beta\\ \gamma& \delta\esm\right)(x,y) = (\alpha x+\beta y, \gamma x+\delta y).
\]
Let $\Gamma_Q$ denote the  stabilizer of $Q$ in $\Gamma$.

For $d<0$ and $Q(x,y) = ax^2 + bxy + cy^2 \in\mathcal{Q}_d$, the root $z_Q := \frac{-b+\sqrt{d}}{2a}$ defines a CM point in $\HH$.
The values of the function $j_1$ at  CM points are called {\it singular moduli}.
The trace of singular moduli of index $d$ is defined by
\begin{equation*} \label{negativetrace}
\Tr_d(j_1) := \sum_{Q\in \Gamma\setminus \mathcal{Q}_d} \frac{1}{|\Gamma_Q|} j_1(z_Q).
\end{equation*}
Zagier \cite{Zag} proved that
 the generating series of $\Tr_d(j_1)$, defined by
\begin{equation} \label{g1}
g_1(z) := -q^{-1} + 2 + \sum_{d<0} \Tr_d(j_1)q^{|d|} = -q^{-1} + 2 - 248q^3 + 492q^4 - 4119q^7 + 7256q^8+\cdots,
\end{equation}
 is a weakly holomorphic modular form of weight $\frac32$ and the  multiplier system $\chi_{\theta}^3$ on $\Gamma_0(4)$. Here, $\theta(z) := 1 + 2\sum_{n=1}^\infty q^{n^2}$ is the theta function, and $\chi^{}_\theta$ is the theta multiplier system of weight $\frac12$ on $\Gamma_0(4)$ defined by
 $$\chi^{}_{\theta}(\gamma) := \frac{\theta(\gamma z)}{(cz+d)^{\frac12}\theta(z)}$$
 for $\gamma = \sm a&b\\c&d\esm\in \Gamma_0(4)$.
 In this paper, we use the convention:
$z = |z|e^{i\mathrm{arg}(z)},\ -\pi< \mathrm{arg}(z) \leq \pi$ as in \cite{BF}.
Bruinier and Funke \cite{BF2} extended the result of Zagier  to weakly holomorphic modular functions on modular curves of arbitrary genus.

For $d>0$, the modular trace $\Tr_d(j_1)$ of index $d$ is defined by the sum of cycle integrals of $j_1$.
Cycle integrals were used to define the Shintani lifting in \cite{Shi}, and Kaneko \cite{Kan} used them to compute values of $j(\tau)$ at real qudratics.

Let us assume that $d>0$, and that $Q(x, y) = ax^2 + bxy + cy^2$ is in $\mathcal{Q}_d$. If $a, b$, and $c$ have no non-trivial common divisor, then, for a non-square integer $d$, we let $t$ and $u$ be the smallest half-integral solutions of the Pell equation $t^2 - du^2 = 1$, and
\[
g_Q := \sm t+bu& 2cu\\ -2au& t-bu\esm \in \SL_2(\ZZ).
\]
Let $\delta$ be the greatest common divisor of $a, b$, and $c$, and $Q':=Q/\delta$. Then $c(Q)$ denotes any rectifiable curve in $\HH$ from $z$ to $g_{Q'}z$, where $z$ is any point on $\HH$.
For a non-square integer $d$, the modular trace $\Tr_d(j_1)$ of index $d$ is defined by
\[
\Tr_d(j_1) := \frac{1}{2\pi}\sum_{Q\in\Gamma\setminus \mathcal{Q}_d} \int_{c(Q)} j_1(\tau)\frac{d\tau}{Q(\tau,1)}
\]
(see \cite[Section 1]{DIT3} for the definition of $\Tr_d(j_1)$  when $d$ is a square of an integer).
In \cite{DIT3}, Duke, Imamo$\bar{\mathrm{g}}$lu, and T\'oth proved that the generating series of $\Tr_d(j_1)$ for $d>0$, defined by
\begin{equation*}
F(z) :=\sum_{d>0} \Tr_d(j_1)q^d
\end{equation*}
is a mock modular form of weight $\frac12$ and multiplier system $\chi^{}_\theta$ on $\Gamma_0(4)$ such that its shadow is $-2g_1$ in (\ref{g1}), i.e., $\xi_{\frac12}\left(\hat{F}\right) = -2g_1$, where $\xi_{k} := 2iy^k \overline{\frac{\partial}{\partial\bar{z}}}$, $y$ is the imaginary part of $z$, and $\hat{F}$ is a harmonic weak Maass form of weight $\frac12$ on $\Gamma_0(4)$ obtained as a completion of $F$.
Recently, Bruinier, Funke, and  Imamo$\bar{\mathrm{g}}$lu \cite{BFI} generalized the result of  Duke, Imamo$\bar{\mathrm{g}}$lu, and T\'oth to weakly holomorphic modular functions on modular curves of arbitrary genus. They also gave a geometric interpretation of $\Tr_d$ for square integers $d$.

In this paper, we investigate connections between $\Tr_d(f)\ (d>0)$ and $\Tr_d(f)\ (d<0)$ for a weakly holomorphic modular function $f$ by considering a certain asymptotic behavior of twisted sums of $\Tr_d(f)$ over $d>0$ and $d<0$, respectively. For this purpose, we define a regularized twisted $L$-function of a weakly holomorphic modular form, which has a meromorphic continuation on $\mathbb{C}$ and satisfies a certain functional equation.

To state our main theorem, we introduce more notation.
Let $N$ be a positive integer.
For $k\in \frac{1}{2}\mathbb{Z}$, let
$M^!_{k}(\Gamma_0(4N))$ be the space of weakly holomorphic modular forms of weight $k$ and multiplier $\chi_{\theta}^{2k}$ on $\Gamma_0(4N)$, where a weakly holomorphic modular form is a meromorphic modular form which has poles only at cusps.
For a discrete subgroup $\Gamma$ of $\SL_2(\RR)$,
let $M^!_{0}(\Gamma)$ be the space of weakly holomorphic modular functions on $\Gamma$ with the trivial character.
Let $\mathcal{Q}_{d,N}$ be  the set of quadratic forms $ax^2 + bxy +cy^2 \in\mathcal{Q}_d$ such that $a\equiv 0\ (\m\ N)$.
The group $\Gamma_0(N)$ acts on $\mathcal{Q}_{d,N}$ with finitely many orbits.

Let $f\in M^!_{0}(\Gamma_0(N))$ with a Fourier expansion of the form
\begin{equation} \label{Fourierinfinite}
f(z) = \sum_{n\gg-\infty} a(n)e^{2\pi inz}
\end{equation}
and assume that its constant coefficients  vanish at all cusps of $\Gamma_0(N)$.
For a nonzero integer $d$, the {\it modular trace} $\Tr_{d}(f)$ of $f$ of index $d$ is defined by
\begin{equation*}
\Tr_d(f) :=
\begin{cases}
\sum_{Q\in \Gamma_0(N)\setminus \mathcal{Q}_{d,N}} \frac{1}{|\Gamma_0(N)_Q|} f(z_Q) & \text{if $d<0$},\\
\frac1{2\pi} \sum_{Q\in\Gamma_0(N)\setminus\mathcal{Q}_{d,N}} \int^{\reg}_{\Gamma_0(N)_Q\setminus c_Q} f(z)\frac{dz}{Q(z,1)}& \text{if $d>0$},\\
-\frac{1}{2\pi}\int^{\mathrm{reg}}_{\Gamma_0(N)\setminus \HH} f(z)\frac{dxdy}{y^2} & \text{if $d = 0$},
\end{cases}
\end{equation*}
where $c_Q$ is a properly oriented geodesic in $\HH$ connecting two roots of $Q(z,1)$ and $\int^{\reg}_{\Gamma_0(N)_Q\setminus c_Q}$ and $\int^{\reg}_{\Gamma_0(N)\setminus\HH}$ are regularized integrals, as defined by Bruinier, Funke, and  Imamo$\bar{\mathrm{g}}$lu \cite{BFI} (for their precise definitions, see Section \ref{tracesingular}).

It was proved in \cite{BFI} that the generating series of  $\Tr_d(f)$ for $d>0$, defined by
\begin{equation} \label{mockpart}
H(f)(z) := \sum_{d>0} \Tr_d(f)e^{2\pi idz},
\end{equation}
is a mock modular form of weight $\frac12$ on $\Gamma_0(4N)$.
Furthermore, the shadow is $-2W(f)$, where $W(f)$ is the generating series of $\Tr_d(f)$ with $d\geq0$.
For example, if $f$ is a weakly holomorphic modular function on $\Gamma_0^*(N)$ for a square-free integer $N$, then $W(f)$ is given by
\begin{equation} \label{shadowW}
W(f)(z) = -\sum_{d>0}\sum_{n<0}d\overline{a(dn)}q^{-d^2} + \frac12\overline{\Tr_0(f)} + \sum_{d<0}\overline{\Tr_d(f)}q^{-d}.
\end{equation}
Here, $\Gamma_0^*(N)$ denotes the extension of $\Gamma_0(N)$ by the Atkin-Lehner involutions $W_p = \sm 0&-1\\p&0\esm$ for all primes $p | N$. For a weakly holomorphic modular function $f$ on $\Gamma_0(N)$ with an arbitrary positive integer $N$, we give an explicit expression for $W(f)$ in Section \ref{workBFI}.

For $k\in\frac12\ZZ$, let $g$ be a weakly holomorphic modular form in $M^!_{k}(\Gamma_0(4N))$. Let $\Gamma(s,x)$ be the incomplete gamma function as in (6.5.3) of \cite{AS} (see also Section 3 of \cite{BF}).
Assume that, for each $\gamma = \sm a&b\\c&d\esm\in\SL_2(\ZZ)$, the function $(cz+d)^{-k}g(\gamma z)$ has a Fourier expansion of the form
\begin{equation} \label{fourierq}
 \sum_{n\gg-\infty} b_\gamma(n)e^{2\pi i(n+\kappa_\gamma)z/\lambda_\gamma},
\end{equation}
where $\kappa_\gamma\in[0,1)$ and $\lambda_\gamma$ is a positive integer.
If $\gamma = I$, then we drop $\gamma$ from the notation.
For $r\in\QQ$, we define a {\it regularized twisted $L$-function of $g$ associated with $r$} by
\begin{eqnarray} \label{generalcuspL}
\nonumber L^{\reg}_r(g,s) &:=& \frac{1}{\Gamma(s)}\sum_{n\gg-\infty\atop n\neq0} \frac{b(n)e^{2\pi inr}}{n^s} \Gamma(s, 2\pi n) \\
&&+ \frac{i^{k}}{c^k\Gamma(s)} (2\pi)^{2s-k} (c^2)^{k-s}
\sum_{n\gg-\infty\atop n+\kappa_{\gamma}\neq0} \frac{b_\gamma(n)e^{2\pi i(n+\kappa_\gamma)(-\frac{d}{c})/\lambda_\gamma}}{\left(\frac{n+\kappa_\gamma}{\lambda_\gamma}\right)^{k-s}} \Gamma\left(k-s, \frac{2\pi(n+\kappa_\gamma)}{c^2\lambda_\gamma}\right)\\
\nonumber&& -\frac{(2\pi)^{s} b(0)}{s\Gamma(s)}- \delta_{\kappa_\gamma,0} \frac{i^k (2\pi)^s b_\gamma(0)}{ c^k(k-s)\Gamma(s)},
\end{eqnarray}
where $\delta_{\kappa_\gamma,0}$ is the Kronecker delta, and $\gamma = \sm a&b\\c&d\esm\in\SL_2(\ZZ)$  such that
$r= \gamma(i\infty)$.
The definition of $L^{\reg}_r(g,s)$ is independent of the choice of $\gamma$ (see Lemma \ref{Lseries} (2)).

To state the first theorem, we introduce some notations.
For odd $d$, let $\varepsilon_d$ be defined by
\begin{equation*}
\varepsilon_d  :=
\begin{cases}
1 & \text{if $d\equiv 1\ (\mathrm{mod}\ 4)$},\\
i & \text{if $d\equiv 3\ (\mathrm{mod}\ 4)$}.
\end{cases}
\end{equation*}
If $d$ is an odd prime, then let $\left( \frac cd \right)$ be the usual Legendre symbol.
For positive odd $d$, define $\left( \frac cd \right)$ by multiplicativity.
For negative odd $d$, we let
\begin{equation*}
\left( \frac cd \right) =
\begin{cases}
\left( \frac {c}{|d|}\right) & \text{if $d<0$ and $c>0$},\\
-\left( \frac {c}{|d|}\right) & \text{if $d<0$ and $c<0$}.
\end{cases}
\end{equation*}
We also let $\left( \frac{0}{\pm1}\right) = 1$.
 The following theorem shows that, as $z$ approaches a rational number, the limit of the sum $\sum_{d>0}\Tr_d(f)e^{2\pi idz}$ is a special value of a regularized twisted  $L$-function defined by $\Tr_d(f)$ for $d\leq0$.

\begin{thm} \label{trace}
Let $N$ be a positive integer. Suppose that $f\in M^!_{0}(\Gamma_0(N))$ with Fourier expansion as in (\ref{Fourierinfinite}),
and that its constant coefficients vanish at all cusps of $\Gamma_0(N)$.
If $r=\frac ac$ is a rational number such that $c\equiv 0\ (\m\ 4N)$, $\mathrm{gcd}(a,c) = 1$ and $c>0$,
then
\begin{equation} \label{radial}
\lim_{t \to 0} \left( \sum_{d>0} \Tr_d(f)e^{2\pi id(r+it)}+ \frac{2c_r\sqrt{1+ct}}{ct} \right) = - \overline{L^{\reg}_r\left(W(f), \frac12\right)}+c_r,
\end{equation}
where $$c_r:=\frac{e^{i\pi/4}\varepsilon_a^{3}}{\sqrt{2c}}\left(\frac{c}{a}\right)\Tr_0(f).$$
\end{thm}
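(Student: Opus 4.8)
The plan is to exploit that $H(f)$ is the holomorphic part of the harmonic weak Maass form $\hat{H}(f)$ of weight $\frac12$ on $\Gamma_0(4N)$ with shadow $\xi_{1/2}(\hat{H}(f)) = -2W(f)$, and to split the radial limit accordingly. Write $\hat{H}(f) = H(f) + H^-(f)$, where $H^-(f)$ is the non-holomorphic part. By the standard structure of harmonic weak Maass forms, $H^-(f)$ is the non-holomorphic Eichler integral of the shadow, given explicitly as a sum of incomplete gamma terms $\Gamma(\frac12,\cdot)$ together with a single $y^{1/2}$-term whose coefficient is fixed by the constant term $\frac12\overline{\Tr_0(f)}$ of $W(f)$ (using $\xi_{1/2}(y^{1/2}) = \frac12$). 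Thus for $t>0$,
\[
\sum_{d>0}\Tr_d(f)e^{2\pi id(r+it)} = H(f)(r+it) = \hat{H}(f)(r+it) - H^-(f)(r+it),
\]
and I would take the limit $t\to0^+$ of the two terms on the right separately.

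First I would treat the non-holomorphic contribution. For each fixed $t>0$, $H^-(f)(r+it)$ is a convergent series of incomplete gamma terms; as $t\to0^+$ it converges, after the built-in regularization, to $\overline{L^{\reg}_r(W(f),\frac12)}$, so that $-H^-(f)(r+it)\to-\overline{L^{\reg}_r(W(f),\frac12)}$. Concretely, I would specialize (\ref{generalcuspL}) to $g=W(f)$, $k=\frac32$ and $s=\frac12$, so that $\Gamma(s,\cdot)=\Gamma(\frac12,\cdot)$ in the first sum and $\Gamma(k-s,\cdot)=\Gamma(1,\cdot)=e^{-\cdot}$ in the second, and match these two sums term-by-term against the Fourier expansions of $H^-(f)$ at the cusps $i\infty$ and $r$, respectively. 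The meromorphic continuation and the $\gamma$-independence supplied by Lemma \ref{Lseries} are what make this limit well defined and identify it with $\overline{L^{\reg}_r(W(f),\frac12)}$; the overall complex conjugation and the sign track through from the normalization of the shadow $-2W(f)$ and the conjugate-linearity of $\xi_{1/2}$.

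Next I would treat $\hat{H}(f)(r+it)$. Since $c\equiv0\ (\m\ 4N)$, the matrix $\gamma=\sm a&b\\c&d\esm$ with $ad-bc=1$ lies in $\Gamma_0(4N)$ and satisfies $\gamma(i\infty)=r$, so the transformation law gives $\hat{H}(f)(r+it)=\chi_\theta(\gamma)(cw+d)^{1/2}\hat{H}(f)(w)$ with $w=\gamma^{-1}(r+it)$; one computes $w=-\frac dc+\frac{i}{c^2t}$, whence $w\to i\infty$ and $(cw+d)^{1/2}=e^{i\pi/4}(ct)^{-1/2}$. Because the holomorphic part $H(f)(w)=\sum_{d'>0}\Tr_{d'}(f)e^{2\pi id'w}$ decays exponentially, the limit is governed entirely by the $y^{1/2}$-term of $H^-(f)(w)$, i.e. by $\Tr_0(f)$. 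This produces the divergent counterterm together with the finite constant, namely $\hat{H}(f)(r+it)=-\frac{2c_r\sqrt{1+ct}}{ct}+c_r+o(1)$; the precise constant $c_r=\frac{e^{i\pi/4}\varepsilon_a^{3}}{\sqrt{2c}}\left(\frac ca\right)\Tr_0(f)$ emerges by combining the $e^{i\pi/4}=i^{1/2}$ from $(cw+d)^{1/2}$ with the explicit evaluation of the theta multiplier $\chi_\theta(\gamma)$, rewritten through quadratic reciprocity so that $\varepsilon_d$ and $\left(\frac cd\right)$ become $\varepsilon_a$ and $\left(\frac ca\right)$. Adding the counterterm $\frac{2c_r\sqrt{1+ct}}{ct}$ leaves the finite limit $c_r$, and combining with the non-holomorphic contribution yields exactly $-\overline{L^{\reg}_r(W(f),\frac12)}+c_r$, as claimed.

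The main obstacle is twofold and lies in the careful bookkeeping rather than in the overall strategy. First, one must justify interchanging $t\to0^+$ with the regularized summation in the $H^-$-step: the defining Dirichlet series of $L^{\reg}_r$ is divergent at $s=\frac12$ in the naive sense, so identifying the $t\to0^+$ limit with the meromorphically continued value is an Abelian-limit statement that must be routed through the integral representation and analytic continuation in Lemma \ref{Lseries}. Second, one must pin down \emph{every} constant in the $\hat{H}(f)$-step — not merely the leading $1/t$ pole but the exact counterterm $\frac{2c_r\sqrt{1+ct}}{ct}$ and the finite limit $c_r$ — which requires the exact asymptotic expansion of the constant-term contribution at the cusp and a correct evaluation of the theta multiplier $\chi_\theta(\gamma)$, including the Gauss-sum and reciprocity manipulations that produce the Legendre symbol $\left(\frac ca\right)$ and the factor $\varepsilon_a^{3}$.
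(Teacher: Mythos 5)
There is a genuine gap, and it is in the decomposition itself, not in the bookkeeping. You split $H(f)(r+it)=\hat{H}(f)(r+it)-H^-(f)(r+it)$ and take the two limits separately, but in general \emph{neither} limit exists. By Corollary \ref{scalar}, the nonholomorphic part of $\hat{H}(f)$ contains the terms $\frac1{\sqrt{N}}\sum_{d>0}\Tr^c_{d^2}(f)\bigl(\int_0^{2\sqrt{Ny}}e^{\pi d^2w^2/N}dw\bigr)e^{2\pi id^2z}$, and these grow like $e^{2\pi d^2y}$ (up to powers of $y$) as $y\to\infty$. So when you pass to $w=\gamma^{-1}(r+it)=-\frac dc+\frac{i}{c^2t}$ and use the transformation law, $\hat{H}(f)(w)$ contains contributions of size $e^{2\pi d^2/(c^2t)}$ whenever $\Tr^c_{d^2}(f)\neq0$; hence $\hat{H}(f)(r+it)$ blows up like $t^{-1/2}e^{2\pi d^2/(c^2t)}$ rather than like $\frac{2c_r\sqrt{1+ct}}{ct}$. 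This is the generic situation: by (\ref{shadowW}) and Remark \ref{squarefreeformula}, $\Tr^c_{d^2}(f)=2d\sum_{n<0}a(dn)$, which is nonzero for some $d$ as soon as $f$ has a pole at $i\infty$, i.e.\ for every nonconstant $f$. Correspondingly $H^-(f)(r+it)$ carries a matching exponential divergence (its $\mathrm{erfc}$-series has coefficients of size $e^{C\sqrt{|d|}}$ against only $e^{-2\pi|d|t}$ damping), and only the \emph{combination} of the two pieces has the power-type divergence recorded in (\ref{radial}). So both of your asserted separate limits --- ``$\hat{H}(f)(r+it)=-\frac{2c_r\sqrt{1+ct}}{ct}+c_r+o(1)$'' and ``$-H^-(f)(r+it)\to-\overline{L^{\reg}_r(W(f),\frac12)}$'' --- are false in general; the ``Abelian-limit'' caveat you raise cannot repair a grouping in which each half diverges exponentially.

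A second, related error: $H^-(f)$ is not modular, so ``the Fourier expansion of $H^-(f)$ at the cusp $r$'' against which you propose to match the $b_\gamma(n)$-sum of (\ref{generalcuspL}) is not defined, and that sum does not arise from $H^-(f)$ near $r$. The paper's grouping (Theorem \ref{mocklimit}) is designed precisely so that the exponential divergences cancel identically: one writes $H(f)(\gamma z)$ as a term $\chi_\theta^{-2k}(\gamma)(cz+d)^{2-k}H(f)(z)$, which tends to $0$ because $H(f)$ has neither principal part nor constant term at $i\infty$, plus the \emph{period} $(H^- - H^-|_{\frac12}\gamma)$, which by Lemma \ref{periodfunction} equals a regularized Eichler integral of the shadow $-2W(f)$ along $[\gamma^{-1}(i\infty),i\infty]$ --- an object with controlled boundary behavior. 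The two sums in $L^{\reg}_r$ then come from splitting this integral at the interior point $z_0=\frac ac+\frac ic$ (the pieces $[\gamma^{-1}z_0,i\infty]$ and $[z_0,i\infty]$ yield the $b_\gamma(n)$-sum with $\Gamma(1,\cdot)$ weights and the $b(n)$-sum with $\Gamma(k-1,\cdot)$ weights, respectively), with the limit interchange justified by the asymptotics of $A(x,y,s)$ in Lemmas \ref{gammaasymptotic} and \ref{uniform} and the $t$-independence in (\ref{tindependent}). Your final step --- evaluating $\chi_\theta(\gamma)=\varepsilon_a^{-1}\left(\frac ca\right)$ via reciprocity and assembling $c_r$ --- does coincide with the paper's concluding computation, but the analytic core of your proposal must be replaced by the period/cocycle route of Theorem \ref{mocklimit}.
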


\begin{rmk} Let $g\in M_{k}^{!}(\Gamma_0(4N))$ for $k\in\frac12\ZZ$. The regularized $L$-functions of weakly holomorphic modular forms satisfy the following properties  (for the details, see Proposition \ref{Lseries} in Section \ref{Regularized $L$-function}).
\begin{enumerate}
\item
The series $L^{\reg}_r(g,s)$ defined in (\ref{generalcuspL}) converges absolutely  on $\mathbb{C}$ except $s=0$ and $s=k$.
\item
If $g$ is a cusp form, then the usual $L$-function $L(g,s)$ of $g$ is the same as $L^{\reg}_0(g,s)$.
\end{enumerate}
\end{rmk}

\begin{rmk}
The limit in (\ref{radial}) is the radial limit that was used by Ramanujan in explaining what he meant by a mock theta function, in his last letter to Hardy (for example, see \cite{AH}).
Ramanujan claimed that, for a mock theta function $f(q)$ and a root of unity $\zeta$, there is a modular form
$
\theta_{f,\zeta}(q)
$
such that the difference
\begin{equation} \label{difference}
f(q) - \theta_{f,\zeta}(q)
\end{equation}
  is bounded as $q\to\zeta$ radially.
Watson \cite{Wat} was the first to prove Ramanujan's claim concerning the mock theta function
\[
1 + \frac{q}{(1+q)^2} + \frac{q^4}{(1+q)^2(1+q^2)^2} + \cdots.
\]
Folsom, Ono, and Rhoades \cite{FOR} gave a new proof of this result, and obtained a simple closed formula for the suggested $O(1)$ constants as values of a ``quantum'' $q$-hypergeometric series.
The authors and Rhoades \cite{CLR} 
computed the radial limit of (\ref{difference}) by using special values of the twisted $L$-functions of their shadows.
\end{rmk}

Duke, Imamo$\bar{\mathrm{g}}$lu, and T\'oth \cite{DIT0, DIT3} studied period functions of modular integrals whose Fourier coefficients are given by cycle integrals of certain weakly holomorphic modular forms.
Note that $H(f)$ defined in (\ref{mockpart}) is a mock modular form of weight $\frac12$.
The following corollary describes
 the limit behavior of period functions of  the mock modular form $H(f)$.

\begin{cor} \label{period}
Let
$
\psi(\hat{H}(f))_\gamma := H(f) - H(f)|_{\frac12}\gamma
$
for $\gamma\in\Gamma_0(4N)$.
Let $\gamma = \sm a&b\\c&d\esm\in \Gamma_0(4N)$ with $r = \gamma(\infty)\in\QQ$ and $c>0$.
Then, we have
\begin{equation}  \label{psiHflimit}
\lim_{t \to 0} \left( \psi(\hat{H}(f))_{\gamma^{-1}}(r+it) + \frac{2c_r\sqrt{1+ct}}{ct} \right) = - \overline{L^{\reg}_r\left(W(f), \frac12\right)}+c_r.
\end{equation}
\end{cor}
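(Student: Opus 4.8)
The plan is to unfold the definition of the period function and reduce the statement to Theorem~\ref{trace}. Write $\gamma=\sm a&b\\c&d\esm\in\Gamma_0(4N)$, so that $r=\gamma(\infty)=\frac ac$. First I would check that the hypotheses of Theorem~\ref{trace} hold for this $r$: since $\gamma\in\Gamma_0(4N)$ we have $c\equiv0\ (\m\ 4N)$, since $\gamma\in\SL_2(\ZZ)$ we have $\mathrm{gcd}(a,c)=1$, and $c>0$ by assumption. Using the definition $\psi(\hat H(f))_{\gamma^{-1}}=H(f)-H(f)|_{\frac12}\gamma^{-1}$, I split
\[
\psi(\hat H(f))_{\gamma^{-1}}(r+it)+\frac{2c_r\sqrt{1+ct}}{ct}=\left(H(f)(r+it)+\frac{2c_r\sqrt{1+ct}}{ct}\right)-\bigl(H(f)|_{\frac12}\gamma^{-1}\bigr)(r+it).
\]
Since $H(f)(r+it)=\sum_{d>0}\Tr_d(f)e^{2\pi id(r+it)}$ is precisely the sum in Theorem~\ref{trace}, the radial limit of the first term on the right-hand side equals $-\overline{L^{\reg}_r(W(f),\frac12)}+c_r$. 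Hence everything reduces to showing that the slashed term is negligible, i.e.\ $\lim_{t\to0}\bigl(H(f)|_{\frac12}\gamma^{-1}\bigr)(r+it)=0$.

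To prove this vanishing, I would track the point $r+it$ under $\gamma^{-1}=\sm d&-b\\-c&a\esm$. Using $r=\frac ac$ and $ad-bc=1$, a direct computation gives
\[
\gamma^{-1}(r+it)=\frac{d(r+it)-b}{-c(r+it)+a}=-\frac dc+\frac{i}{c^2t},
\]
so that $\im\bigl(\gamma^{-1}(r+it)\bigr)=\frac{1}{c^2t}\to+\infty$ as $t\to0^+$. Writing the weight-$\frac12$ slash operator as
\[
\bigl(H(f)|_{\frac12}\gamma^{-1}\bigr)(z)=\chi_\theta(\gamma^{-1})^{-1}\,(-cz+a)^{-\frac12}\,H(f)\bigl(\gamma^{-1}z\bigr),
\]
I note that at $z=r+it$ one has $-cz+a=-cit$, so the automorphy factor has modulus $(ct)^{-\frac12}$, while $|\chi_\theta(\gamma^{-1})^{-1}|=1$.

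The heart of the argument is then a competition between this polynomial growth and the exponential decay of $H(f)$ toward $i\infty$. Since $H(f)$ is a $q$-series supported on strictly positive exponents, letting $d_0>0$ be the smallest index with $\Tr_{d_0}(f)\neq0$, one gets the bound
\[
\bigl|H(f)\bigl(\gamma^{-1}(r+it)\bigr)\bigr|\le\sum_{d>0}|\Tr_d(f)|\,e^{-2\pi d/(c^2t)}=O\!\left(e^{-2\pi d_0/(c^2t)}\right)\qquad(t\to0^+),
\]
which decays faster than any power of $1/t$. Multiplying by the bounded multiplier and the factor $(ct)^{-1/2}$ still gives something tending to $0$, so $\lim_{t\to0}\bigl(H(f)|_{\frac12}\gamma^{-1}\bigr)(r+it)=0$, and combining with Theorem~\ref{trace} yields (\ref{psiHflimit}). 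I do not expect a genuine obstacle here; the only steps needing care are confirming $|\chi_\theta(\gamma^{-1})^{-1}|=1$ so the theta multiplier cannot disturb the limit, and observing that the regularizing term $\frac{2c_r\sqrt{1+ct}}{ct}$ is inherited unchanged from Theorem~\ref{trace} precisely because the slashed piece contributes nothing in the limit.
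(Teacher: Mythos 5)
Your proposal is correct and follows essentially the same route as the paper: the paper deduces Corollary~\ref{period} from Corollary~\ref{psilimit} (specialized to $k=\frac32$, $g=\hat{H}(f)$) combined with the multiplier computation from the proof of Theorem~\ref{trace}, and the proof of Corollary~\ref{psilimit} is precisely your decomposition $\psi(\hat{H}(f))_{\gamma^{-1}} = H(f) - H(f)|_{\frac12}\gamma^{-1}$ together with the vanishing of the slashed term as $t\to0$, which the paper justifies in one line by the absence of principal part and constant term of the holomorphic part at $i\infty$ and which you verify explicitly via $\gamma^{-1}(r+it) = -\frac dc + \frac{i}{c^2t}$ and the exponential-decay estimate. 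Your explicit bound beating the factor $(ct)^{-1/2}$ is just a fleshed-out version of the paper's argument, so there is no substantive difference.
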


For $r\in\QQ$ equivalent to $i\infty$ under the action of $\Gamma_0(4N)$, let
\begin{equation} \label{qhf}
Q_{\hat{H}(f)}(r): =  \lim_{t \to 0} \left( \sum_{d>0} \Tr_d(f)e^{2\pi id(r+it)}+ \frac{2c_r\sqrt{1+ct}}{ct} \right).
\end{equation}
In the following theorem, we prove that, for two weakly holomorphic modular functions $f$ and $g$ on $\Gamma_0^*(N)$ with a square-free integer $N$,  if $Q_{\hat{H}(f)}(r)$ and $Q_{\hat{H}(g)}(r)$ have the same values at rational numbers $r$ equivalent to $i\infty$ under the action of $\Gamma_0(4N)$, then $\mathrm{Tr}_d(f) = \mathrm{Tr}_{d}(g)$ for each index $d$.

\begin{thm} \label{determine}
Let $N$ be a square-free integer and  $f,g\in M^!_{0}(\Gamma_0^*(N))$ such that  their
 constant terms vanish in Fourier expansions at $i\infty$.
Suppose that the identity
\[
Q_{\hat{H}(f)}(r) = Q_{\hat{H}(g)}(r)
\]
holds for all rational numbers $r$ equivalent to $i\infty$ under the action of $\Gamma_0(4N)$.
Then, for each integer $d$,
\[
\mathrm{Tr}_d(f) = \mathrm{Tr}_{d}(g).
\]
\end{thm}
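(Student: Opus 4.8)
The plan is to exploit the linearity of every construction in play and to reduce the statement to a rigidity property of a single function. Set $\phi := f-g\in M^!_0(\Gamma_0^*(N))$; its constant term at $i\infty$ vanishes. Since the modular traces, the series $W(\cdot)$ in (\ref{shadowW}), the constants $c_r$, and the radial limit in (\ref{qhf}) are all linear in the input, the hypothesis becomes $Q_{\hat H(\phi)}(r)=0$ for every $r$ equivalent to $i\infty$ under $\Gamma_0(4N)$, and it suffices to prove $\phi\equiv 0$, which yields $\Tr_d(f)=\Tr_d(g)$ for all $d$ simultaneously. The decisive structural remark is that $\phi$ is pinned down by finitely many numbers: being weakly holomorphic it has a finite principal part at $i\infty$, its constant term there is zero, and---because $N$ is square-free, so the Atkin--Lehner involutions act transitively on the cusps of $\Gamma_0(N)$ and $\phi$ is $\Gamma_0^*(N)$-invariant---its behaviour at every other cusp is governed by that at $i\infty$. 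Hence, once the principal part of $\phi$ at $i\infty$ is shown to vanish, $\phi$ extends to a holomorphic function on the compact curve $X_0^*(N)$ with a zero at $i\infty$, forcing $\phi\equiv 0$.

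Next I would translate $Q_{\hat H(\phi)}(r)=0$ into information about the shadow $W(\phi)$. By Theorem \ref{trace}, for $r=\tfrac ac$ with $c\equiv 0\pmod{4N}$, $\gcd(a,c)=1$ and $c>0$, this reads
\[
\overline{L^{\reg}_r\!\left(W(\phi),\tfrac12\right)}\;=\;c_r(\phi)\;=\;\frac{e^{i\pi/4}\varepsilon_a^{3}}{\sqrt{2c}}\left(\frac ca\right)\Tr_0(\phi).
\]
By (\ref{shadowW}) the principal part of the weight-$\tfrac32$ form $W(\phi)$ is concentrated in the coefficients of $q^{-d^2}$ $(d>0)$, namely $-d\sum_{n<0}\overline{a_\phi(dn)}$, while $\Tr_0(\phi)$ and $\Tr_{-m}(\phi)$ $(m>0)$ occupy the constant and holomorphic parts. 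A short Möbius/leading-order argument shows that the principal part of $\phi$ at $i\infty$ vanishes if and only if all of the sums $\sum_{n<0}a_\phi(dn)$ vanish: taking $d$ equal to the pole order isolates the top coefficient $a_\phi(-d)$, and a descending induction kills the rest. Thus everything comes down to reading the finitely many principal-part coefficients $b(-d^2)$ of $W(\phi)$ off the special values displayed above.

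The heart of the proof is therefore to recover these coefficients of $W(\phi)$ from the family of special values $L^{\reg}_r(W(\phi),\tfrac12)$ as $r=a/c$ ranges over the cusps equivalent to $i\infty$. For $\gamma=\sm a&b\\c&d\esm\in\Gamma_0(4N)$ one has $\kappa_\gamma=0$, $\lambda_\gamma=1$ and $W(\phi)|_{3/2}\gamma=\chi_\theta^3(\gamma)W(\phi)$, so (\ref{generalcuspL}) collapses to an explicit additively twisted series in the coefficients $b(n)$ of $W(\phi)$, weighted by $\Gamma(\tfrac12,2\pi n)$, by $\Gamma(1,2\pi n/c^2)=e^{-2\pi n/c^2}$, and by the additive characters $e^{2\pi i n a/c}$ and $e^{-2\pi i n d/c}$, together with the Gauss-sum-type factor $i^{3/2}\chi_\theta^3(\gamma)$. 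To isolate a fixed coefficient I would, for a fixed modulus $c$ (a multiple of $4N$), sum the identity $\overline{L^{\reg}_r(W(\phi),\tfrac12)}=c_r(\phi)$ over the residues $a\bmod c$ with $\gcd(a,c)=1$ against a suitable additive character, so that the exponentials select $n$ in a prescribed class modulo $c$; letting $c$ vary through multiples of $4N$ then pins down the index itself. Because only the finitely many principal-part coefficients matter by the previous paragraph, this becomes a finite linear system. The twisting factors $\varepsilon_a$ and $\left(\frac ca\right)$ turn the relevant averages into Salié-type sums, and the explicit, nonvanishing evaluation of these sums---equivalently, the nondegeneracy of the resulting linear system in the $b(-d^2)$---is the step I expect to be the main obstacle. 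The functional equation of $L^{\reg}_r$ (Proposition \ref{Lseries}) is the natural device for controlling the two groups of terms in (\ref{generalcuspL}) simultaneously.

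Finally, once the principal-part coefficients of $W(\phi)$ are shown to vanish, the Möbius step gives $a_\phi(m)=0$ for all $m<0$, so $\phi$ has no pole at $i\infty$; by Atkin--Lehner invariance and square-freeness of $N$ it then has no pole at any cusp, while its constant term at $i\infty$ is zero by hypothesis. Consequently $\phi$ descends to a holomorphic function on the compact modular curve $X_0^*(N)$ vanishing at $i\infty$, hence $\phi\equiv 0$. Therefore $\Tr_d(f)=\Tr_d(g)$ for every integer $d$, as claimed.
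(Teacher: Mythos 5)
Your outer skeleton coincides with the paper's: pass by linearity to $\phi=f-g$, show that the shadow $W(\phi)$ has no principal part, deduce from (\ref{shadowW}) that $\sum_{n<0}a_\phi(dn)=0$ for every $d>0$ (whence, by the descending induction you sketch, $\phi$ has no pole at $i\infty$), and finish on the one-cusped compact curve. The gap lies in your central step, the recovery of the principal-part coefficients $b(-d^2)$ of $W(\phi)$ by averaging the identities $\overline{L^{\reg}_r\left(W(\phi),\frac12\right)}=c_r(\phi)$ over $a\bmod c$ against additive characters. First, this is not a finite linear system: each special value in (\ref{generalcuspL}) involves \emph{all} Fourier coefficients of $W(\phi)$, in particular the infinitely many unknowns $\overline{\Tr_d(\phi)}$, $d<0$. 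For the first group of terms one can hope that the weights $\Gamma\left(\frac12,2\pi n\right)\sim(2\pi n)^{-1/2}e^{-2\pi n}$ suppress the large indices as $c\to\infty$; but the second group, at $s=\frac12$, $k=\frac32$ and $\gamma\in\Gamma_0(4N)$ (so $\kappa_\gamma=0$, $\lambda_\gamma=1$, $b_\gamma(n)=\chi_\theta^3(\gamma)b(n)$), carries the prefactor $i^{k}c^{-k}(2\pi)^{2s-k}(c^2)^{k-s}\asymp c^{1/2}$ and weights $\Gamma(1,2\pi n/c^2)=e^{-2\pi n/c^2}$ which decay only at the scale $n\asymp c^2$. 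Since $b(n)=O\left(e^{C\sqrt{n}}\right)$, the partial sums over that range are a priori exponentially large in $c$, while the value itself equals $c_r=O(c^{-1/2})$; so the finiteness of the $L$-value already encodes massive cancellation among exactly the Sali\'e-type sums you propose to evaluate, and term-by-term extraction via character orthogonality is not controlled. Second, you yourself leave the decisive point open: the nonvanishing of those twisted averages, equivalently the nondegeneracy of the linear system in the $b(-d^2)$, is flagged as ``the main obstacle'' but never addressed, and nothing in the paper or in standard estimates supplies it. As written, the proof does not go through.

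It is worth knowing how the paper closes precisely this gap, because it does so by soft means rather than by exponential sums. It rescales, setting $h(z)=\frac1{M^2}\hat{H}(f-g)(M^2z)$, and shows via Lemma \ref{Heckeoperator} that $Q_h$ vanishes on the rationals equivalent to $i\infty$ under $\Gamma_0(4NM^2)$; Corollary \ref{limitperiod} then identifies the limits of the period functions $\psi(h-\hat{H}(f-g))_{\gamma'}$ with period values of $Q_{h-\hat{H}(f-g)}$, which vanish. Since each such period function is, by Lemma \ref{periodfunction}, a regularized Eichler integral that is holomorphic in $z$ on an open region (Lemmas \ref{generalpoint} and \ref{welldefinedforcusp}, and the argument of Lemma \ref{coboundary}), the identity theorem forces it to vanish identically; hence $(h-\hat{H}(f-g))^-$ is itself a harmonic weak Maass form with zero holomorphic part, and the cohomological rigidity result Theorem \ref{nonexistence} (resting on Theorem E of Bruggeman--Choie--Diamantis) gives $\xi_{2-k}(h-\hat{H}(f-g))=0$. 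Comparing pole orders at $i\infty$ (that of $\xi_{2-k}(h)$ is $M^2$ times that of $\xi_{2-k}\hat{H}(f-g)$) then annihilates the principal part of the shadow without evaluating a single character sum. Your reduction and endgame are sound; if you wish to salvage your analytic route, you would have to prove the nondegeneracy of the Sali\'e-sum system in the presence of the slowly weighted infinite tail described above, which is a substantially harder problem than the theorem itself.
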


The remainder of this paper is organized as follows. In Section \ref{tracesingular}, we review the basic notions of modular traces and their modularity which were proved by Duke, Imamo$\bar{\mathrm{g}}$lu, T\'oth \cite{DIT3} and Bruinier, Funke, Imamo$\bar{\mathrm{g}}$lu \cite{BFI}.
In Section \ref{regular}, we define regularized Eichler integrals for weakly holomorphic modular forms, and then we prove that the regularized Eichler integrals are well defined and that, for a harmonic weak Maass form $f$, the nonholomorphic part of $f$ is the same as the regularized Eichler integral of its shadow. In Section \ref{Regularized $L$-function}, we define a regularized $L$-function for a weakly holomorphic
modular form, and we prove that the regularized $L$-function is meromorphic on $\mathbb{C}$ and satisfies a certain functional equation.
In Section \ref{proof1}, we prove Theorem \ref{trace} and Corollary \ref{period}.
In Section \ref{reviewcohomology}, we review the Eichler-Shimura cohomology theory for weakly holomorphic modular forms, which were proved by Bruggeman, Choie, and Diamantis \cite{BCD}. And then, by using the result, we prove that there is no nonzero harmonic weak Maass form of half integral weight such that its holomorphic part is zero. In Section \ref{proofs}, we prove Theorem  \ref{determine}.

\section{Modular traces and their modularity} \label{tracesingular}

In this section, we review basic notions of harmonic weak Maass forms, definitions of modular traces of weakly holomorphic modular functions, and the results \cite{BFI, DIT3} on modularity of modular traces of a weakly holomorphic modular function. We follow the notation in \cite{BFI} (see also  \cite{KS, KM, Shi}).

\subsection{Harmonic weak Maass forms} \label{section2}
A harmonic weak Maass form represents a special case of the real analytic modular forms introduced by Maass \cite{Maa}. For $z\in \HH$, let $x$ (resp. $y$) denote the real part (resp. imaginary part) of $z$.
Let \[\Delta_k := -y^2\left(\frac{\partial^2}{\partial x^2} + \frac{\partial^2}{\partial y^2}\right) + iky\left(\frac{\partial}{\partial x}+ i\frac{\partial}{\partial y}\right).\]
For $\gamma= \sm a&b\\c&d\esm\in \Gamma_0(4N)$ and a function $f$ on $\HH$,
we have a slash operator defined by
\[
(f|_{k}\gamma)(z) := \overline{\chi^{}_{\theta}(\gamma)}^{2k}(cz+d)^{-k}f(\gamma z),
\]
where $\gamma z = \frac{az+b}{cz+d}$.

\begin{dfn}
A harmonic weak Maass form of weight $k$ on $\Gamma_0(4N)$ is a smooth function on $\HH$ satisfying the following properties
\begin{enumerate}
\item for all $\gamma\in\Gamma_0(4N)$, $f|_{k}\gamma = f$,
\item $\Delta_k f = 0$,
\item there exists a constant $\delta>0$ such that
\[(cz+d)^{-k}f(\gamma z) = O(e^{\delta y}),\]
as $y\to\infty$ uniformly in $x$, for every element $\gamma = \sm a&b\\c&d\esm\in \SL_2(\ZZ)$.
\end{enumerate}
\end{dfn}

Let $H_{k}(\Gamma_0(4N))$ denote the space of harmonic weak Maass forms of weight $k$ on $\Gamma_0(4N)$. Let $B_k(x) := e^{-x}\int^{\infty}_{-2x} e^{-t}t^{-k}dt$, which converges for $k<1$ and is holomorphically continued in $k$ (for $x\neq 0$) in the same manner as the gamma function.
If $f$ is a harmonic weak Maass form in $H_{k}(\Gamma_0(4N))$, then $f$ has a unique decomposition $f = f^+ + f^-$, where
\begin{equation} \label{pluspart}
f^+(z) = \sum_{n\gg-\infty} a^+(n)e^{2\pi inz}
\end{equation}
and
\begin{equation} \label{minuspart}
f^-(z) =  \sum_{n\neq0} a^-(n)B_k(2\pi ny)e^{2\pi inx} +a^-(0)c(y).
\end{equation}
Here, $c(y) = \log y$ if $k=1$ and $c(y) = y^{1-k}$ otherwise.
It was proved that there is a constant $C>0$ such that
the Fourier coefficients satisfy
\begin{eqnarray*}
a^+(n) &=& O\left(e^{C\sqrt{|n|}}\right),\ n \to \infty,\\
a^-(n) &=& O\left(e^{C\sqrt{|n|}}\right),\ n\to-\infty
\end{eqnarray*}
(see \cite[Lemma 3.4]{BF}).
In this decomposition, $f^+$ (resp. $f^-$) is called the holomorphic part (resp. nonholomorphic part) of $f$.
Remark that this decomposition depends on the choice of the branch.
In this paper, we follow the choice of the branch in \cite{BF}.

\subsection{Quadratic spaces}
Let $(V,Q)$ be the $3$-dimensional quadratic space over $\QQ$ given by
\[
V := \left\{ X = \sm x_1 & x_2 \\ x_3 & -x_1 \esm \in \mathrm{Mat}_2(\QQ) \right\},
\]
with the quadratic form $Q(X) := -N\det(X)$.
We let $G = \mathrm{Spin}(V)$, viewed as an algebraic group over $\QQ$, and write $\bar{G}$ for its image in $\mathrm{SO}(V)$.
The corresponding bilinear form is $(X,Y) = N\mathrm{tr}(XY)$, and its signature is $(2, 1)$.
The group $\mathrm{SL}_2(\QQ)$ acts on $V$ by
\[
g.X := gXg^{-1}
\]
for $X\in V$ and $g\in \SL_2(\QQ)$, which gives rise to isomorphisms $G\cong \mathrm{SL}_2$ and $\bar{G} \cong \mathrm{PSL}_2$.

We realize the associated hermitian symmetric space as the Grassmannian of negative lines in $V(\RR)$
\[
D := \{ z\subset V(\RR)|\ \dim z = 1\ \text{and}\ Q|_z < 0\}.
\]
We identify $D$ with the complex upper half plane $\HH$ as follows. Let $z_0\in D$ be the line spanned by $\sm 0&1\\ -1&0\esm$.
Its stabilizer in $G(\RR)$ is equal to $K = \mathrm{SO}(2)$.
For $z = x + iy\in\HH$, we choose $g_z\in G(\RR)$ such that $g_z i = z$ and put
\[
X(z) := \frac{1}{\sqrt{N}}g_z. \sm 0&1\\-1&0\esm = \frac{1}{\sqrt{N}y}\sm -x&z\bar{z}\\ -1&x\esm \in V(\RR).
\]
We obtain the isomorphism $\Phi:\HH \to D,\ z\mapsto g_z z_0=\RR X(z)$.

Let $L\subset V(\QQ)$ be an even lattice of full rank and write $L'$ for the dual lattice of $L$.
Let $\Gamma$ be a congruence subgroup of $\mathrm{Spin}(L)$ which takes $L$ to itself and acts trivially on the discriminant group $L'/L$.
We set $M := \Gamma\setminus D$.

Since $V$ is isotropic, the modular curve $M$ is a non-compact Riemann surface.
The group $\Gamma$ acts on the set $\mathrm{Iso}(V)$ of isotropic lines in $V$.
The cusps of $M$ correspond to the $\Gamma$-equivalence classes of $\mathrm{Iso}(V)$, with $i\infty$ corresponding to the isotropic line $l_0$ spanned by $u_0 := \sm 0&1\\0&0\esm$.
For $l\in \mathrm{Iso}(V)$, we pick $\sigma_l\in \SL_2(\ZZ)$ such that $\sigma_l l_0 = l$ and set $u_l:= \sigma_l^{-1} u_0$.
We let $\Gamma_l$ be the stabilizer of $l$ in $\Gamma$.
Then
\[
\sigma_l^{-1}\overline{\Gamma}_l \sigma_l = \left\{ \sm 1&k\alpha_l\\ 0&1\esm \big|\ k\in\ZZ\right\}
\]
for some $\alpha_l\in\ZZ_{>0}$.
There is $\beta_l\in\QQ_{>0}$ such that $\beta_l u_l$ is a primitive element of $l\cap L$.
We write $\epsilon_l := \alpha_l / \beta_l$.

\subsection{Modular traces}
For $X\in V$ of negative length $Q(X) = m < 0$, we put $z_X := \RR X\in D$.
We also write $z_X$ for the corresponding point in $\HH$ and we have
\[
z_X = \frac{-b}{2a} + \frac{i\sqrt{|d|}}{2|a|} \in \HH
\]
for $X = \sm b&2c\\-2a&-b\esm$ with $Q(X) = Nd<0$.
For $m\in\QQ^{*}$ and $h\in L'/L$, the group $\Gamma$ acts on $L_{m,h} := \{X\in L+h|\ Q(X) = m\}$ with finitely many orbits.
For a function $f$ on $M$, we define a modular trace of index $m<0$ by
\[
\mathrm{tr}_{m,h}(f) := \sum_{X\in \Gamma\setminus L_{m,h}} \frac{1}{|\bar{\Gamma}_X|} f(z_X).
\]

For $X\in V$ of positive length $Q(X) = m > 0$,
we define a modular trace by using the period $\int_{c(X)}f(z)dz_X$ as follows.
We define a geodesic $c_X$ in $D$ via
\[
c_X := \{z\in D|\ z\perp X\}.
\]
We also write $c_X$ for the corresponding geodesic in $\HH$ and, for $X = \sm b&2c\\-2a&-b\esm$, we have
\[
c_X = \{z \in \HH\ |\ a|z|^2 + b\mathrm{Re}(z) + c = 0\},
\]
where $\mathrm{Re}(z)$ denotes the real part of $z\in\CC$.
We orient the geodesics as follows. For $X = \pm \sm 1&0\\0&-1\esm$, the geodesic $c_X = \pm (0, i\infty)$ is the imaginary axis with the indicated orientation.
The orientation preserving action of $\SL_2(\RR)$ induces an orientation for all $c_X$.
We define the line measure $dz_X$ for $c_X$ by $dz_X := \pm \frac{dz}{\sqrt{m}z}$ for $X = \pm \sqrt{m/N} \sm 1&0\\0&-1\esm$ and then by $dz_{g^{-1}X} = d(gz)_X$ for $g\in \SL_2(\RR)$.
So, for $X = \frac{1}{\sqrt{N}}\sm b&2c\\-2a&-b\esm$, we have
\begin{equation*} \label{differential}
dz_X = \frac{dz}{az^2 + bz+c}.
\end{equation*}
We set $c(X) := \Gamma_X \setminus c_X$, and we use the same symbol for the image of $c(X)$ in $M$.
The stabilizer $\bar{\Gamma}_X$ is either trivial or infinite cyclic.
If $\bar{\Gamma}_X$ is infinite, then $c(X)$ is a closed geodesic in $M$.
Hence, for a continuous function $f$ on $M$, the period $\int_{c(X)} f(z)dz_X$ converges.

If $\bar{\Gamma}_X$ is trivial, then $c(X)$ is an infinite geodesic and we need to regularize the period $\int_{c(X)} f(z)dz_X$.
Assume that $X$ with $Q(X) = m>0$ gives rise to an infinite geodesic in $M$.
Let $f\in M_0^!(\Gamma)$.
For any isotropic line $l$, the function $f_l(z) := f(\sigma_l z)$ can be written as
\[
f_l(z) = \sum_{n\in \frac{1}{\alpha_l}\ZZ} a_l(n)e^{2\pi inz}
\]
with $a_l(n) = 0$ for $n\ll 0$.
Note that $X^{\perp}$ is split over $\QQ$, a rational hyperbolic plane spanned by two rational isotropic lines $l_X$ and $\tilde{l}_X$.
Note that $\tilde{l}_X = l_{-X}$.
We can distinguish these isotropic lines by requiring that $l_X$ represents the endpoint of the oriented geodesic.
The geodesic $c_X$ is explicitly given in $D\cong \HH$ by
\[
c_X = \sigma_{l_X} \{ z\in D|\ z = \Phi(r+it)\ \text{for some $t>0$} \}.
\]
We call $r = \mathrm{Re}(c_X)$ the {\it real part} of the geodesic $c_X$.
Pick a number $c>0$.
We set
\begin{eqnarray*}
\sqrt{m}\int^{\mathrm{reg}}_{c(X)} f(z)dz_X &:=& -a_{l_X}(0)log(c) + \sum_{n\neq0} a_{l_X}(n)e^{2\pi inr}\mathcal{EI}(2\pi nc)\\
&& -a_{\tilde{l}_X}(0)log(\tilde{c}) + \sum_{n\neq0} a_{\tilde{l}_X}(n)e^{2\pi in\tilde{r}}\mathcal{EI}(2\pi n\tilde{c}),
\end{eqnarray*}
where $\tilde{c} = \mathrm{Im}\left(\sigma_{\tilde{l}_X}^{-1}(r+ic)\right)$, $\tilde{r}$ is the real part of $c_{-X}$ and $\mathcal{EI}(w)$ is defined by
\[
\mathcal{EI}(w)  := \int^{\infty}_{w} e^{-t}\frac{dt}{t}.
\]

Then, we define the trace for positive index $m$ and $h\in L'/L$ of $f$ by
\[
\mathrm{tr}_{m,h}(f) := \frac{1}{2\pi} \sum_{X\in \Gamma\setminus L_{m,h}}\int^{\mathrm{reg}}_{c(X)} f(z)dz_X.
\]
Here, if $X$ is a closed geodesic, then $\int^{\mathrm{reg}}_{c(X)} f(z)dz_X$ denotes the usual integral $\int_{c(X)} f(z)dz_X$.
We also define the complementary trace of $f$ for $m\in N(\QQ^*)^2$ and $h\in L'/L$ by
\[
\mathrm{tr}^c_{m,h}(f) := \sum_{X\in \Gamma\setminus L_{m,h}}\left[ \sum_{n<0} a_{l_X}(n)e^{2\pi i\mathrm{Re}(c(X))n} + \sum_{n<0} a_{\tilde{l}_X}(n) e^{2\pi i\mathrm{Re}(c(-X))n}\right].
\]
By Proposition 4.7 of \cite{BF2}, we have
\begin{eqnarray} \label{trc}
\mathrm{tr}^c_{m,h}(f) &=& 2\sqrt{\frac mN}\sum_{l\in\Gamma\setminus \mathrm{Iso}(V)} \epsilon_l \\
\nonumber &&\times \biggl[ \delta_l(m,h) \sum_{n\in \frac 2{\beta_l}\sqrt{\frac mN}\ZZ\atop n<0} a_l(n)e^{2\pi ir_+n} + \delta_l(m,-h) \sum_{n\in \frac 2{\beta_l}\sqrt{\frac mN}\ZZ\atop n<0} a_l(n)e^{2\pi ir_-n}\biggr],
\end{eqnarray}
where $\delta_l(m,h) = 1$ if there exists a vector $X\in L_{m,h}$ such that $c_X$ ends at the cusp $l$ and $\delta_l(m,h)  = 0$ otherwise.
If such $X$ exists, then $r_{\pm}$ is the real part of $\pm X$.
In particular, $\mathrm{tr}^c_{m,h}(f) = 0$ for $m\gg0$.

To define the trace for the index $(0,h)$, we need to truncate $M$ as follows.
For every cusp $l\in \Gamma\setminus \mathrm{Iso}(V)$, we choose sufficiently small neighborhood $U_l$.
We write $q_l := e^{2\pi i(\sigma_l^{-1}z/\alpha_l)}$ with $z\in U_l$ for the local variable around $l\in \bar{M}$, where $\bar{M}$ is the compact Riemann surface obtained by adding a point for each cusp $l\in \Gamma\setminus\mathrm{Iso}(V)$ to $M$.
For $T>0$, we let $U_{1/T} := \left\{w\in \CC\ |\ |w|<\frac{1}{2\pi T}\right\}$. We truncate $M$ by setting
\[
M_T := \bar{M}\setminus \coprod_{[l]\in \mathrm{Iso}(V)} q_l^{-1}(U_{1/T}).
\]
We define the regularized average value of a function $f$ on $M$ by
\[
\int^{\reg}_{M} f(z)\frac{dxdy}{y^2} := \lim_{T\to\infty} \int_{M_T} f(z)\frac{dxdy}{y^2}.
\]
For $h\in L'/L$, we define the trace of $f$ for the index $(0,h)$ by
\[
\mathrm{tr}_{0,h}(f) := -\delta_{h,0}\frac{1}{2\pi} \int^{\mathrm{reg}}_{M} f(z)\frac{dxdy}{y^2}.
\]

\subsection{Modularity of modular traces} \label{workBFI}
In this subsection, we review results on modularity of modular traces of a weakly holomorphic modular function. 
Let $\mathrm{Mp}_2(\RR)$ be the two-fold metapletic cover of $\SL_2(\RR)$ realized as the group of pairs $(g, \phi(g,z))$, where $g = \sm a&b\\c&d\esm \in \SL_2(\RR)$ and $\phi(g,z)$ is a holomorphic square root of the automorphy factor $j(g,z) = cz+d$.
Let $\Gamma'\subset \mathrm{Mp}_2(\RR)$ be the inverse image of $\SL_2(\ZZ)$ under the covering map.
We denote the standard basis of the group algebra $\CC[L'/L]$ by $\{ \mbf{e}_h|\ h\in L'/L\}$.
Recall that there is a Weil representation $\rho_L$ of $\Gamma'$ on the group algebra $\CC[L'/L]$ (see \cite[Section 4]{Bor0} or \cite[Chapter 1.1]{Bru} for explicit formulas).

Let $\Gamma'' \subset \Gamma'$ be a subgroup of finite index.
For $k\in\frac12\ZZ$, we let $A_{k,L}(\Gamma'')$ be the space of $C^{\infty}$-functions $f:\HH\to \CC[L'/L]$ that satisfy
\[
f(\gamma'z) = \phi^{2k}(z)\rho_L(\gamma',\phi) f(z)
\]
for $(\gamma',\phi)\in \Gamma''$.
We denote by $f_h$ the components of $f$ for $h\in L'/L$.
We call a function $f\in A_{k,L}(\Gamma'')$ a harmonic weak Maass form of weight $k$ for $\Gamma''$ with representation $\rho_L$ if $\Delta_k(f) = 0$ and $f$ has at most linear exponential growth at the cusps of $\Gamma''$. In \cite{DIT3}, Duke, Imamo$\bar{\mathrm{g}}$lu, and T\'oth proved that the generating series of $\Tr_d(j_1)$ for $d>0$, defined by
\begin{equation*}
F(z) :=\sum_{d>0} \Tr_d(j_1)q^d
\end{equation*}
is a mock modular form of weight $\frac12$ and multiplier system $\chi^{}_\theta$ on $\Gamma_0(4)$ such that its shadow is the weakly holomorphic modular form $-2g_1$, where $g_1(z) := -q^{-1} + 2 + \sum_{d<0} \Tr_d(j_1)q^{|d|} $. The following theorem is the extension of \cite{DIT3}, by Bruinier, Funke, and  Imamo$\bar{\mathrm{g}}$lu \cite{BFI}, to weakly holomorphic modular functions on modular curves of arbitrary genus.

\begin{thm} \label{mainofBFI} \cite[Theorem 4.1]{BFI}
Let $f\in M_0^!(\Gamma)$ be a weakly holomorphic modular function whose constant coefficients $a_l(0)$ vanishes at all cusps $l$.
Then, the generating series $\hat{G}(f) = \sum_{h\in L'/L}\hat{G}(f)_h \mbf{e}_h$ is a harmonic weak Maass form of weight $\frac12$ for $\Gamma'$ with representation $\rho_L$, where
\begin{eqnarray*}
\hat{G}(f)_h(z) &:=& -2\sqrt{y} \mathrm{tr}_{0,h}(f) + \sum_{m<0}\mathrm{tr}_{m,h}(f)\frac{\mathrm{erfc}(2\sqrt{\pi|m|y})}{2\sqrt{|m|}}e^{2\pi imz}\\
&& + \sum_{m>0} \mathrm{tr}_{m,h}(f)e^{2\pi imz} + 2\sum_{m>0}\mathrm{tr}^c_{Nm^2,h}(f)\left(\int_0^{\sqrt{y}} e^{4\pi Nm^2w^2}dw\right)  e^{2\pi iNm^2z}.
\end{eqnarray*}
Here, $\mathrm{erfc}(w) = \frac{2}{\sqrt{\pi}}\int^{\infty}_{w} e^{-t^2}dt$ is the complementary error function.
\end{thm}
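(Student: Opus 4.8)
The plan is to realize $\hat{G}(f)$ as a regularized theta lift of $f$ and to transfer modularity and harmonicity from a theta kernel; throughout I write $\tau=u+iv\in\HH$ for the variable of the generating series (the variable called $z$ in the statement) and reserve $z=x+iy\in D\cong\HH$ for the integration variable on $M$. I would attach to $V(\RR)$ the weight-$\frac12$ Millson Schwartz function $\varphi$ (obtained by applying a suitable differential operator to the standard Gaussian) and form the vector-valued theta series
$$\Theta(\tau,z) := \sum_{h\in L'/L}\theta_h(\tau,z)\,\mbf{e}_h,\qquad \theta_h(\tau,z):=\sum_{X\in L+h}\varphi(X,\tau,z),$$
normalized so that, as a function of $\tau$, it lies in $A_{1/2,L}(\Gamma')$ with representation $\rho_L$, while as a function of $z$ it is $\Gamma$-invariant on $D$. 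One then forms the regularized lift
$$I(\tau,f):=\int^{\reg}_{M} f(z)\,\Theta(\tau,z)\,\frac{dx\,dy}{y^2},$$
defined via the truncation $M_T$ exactly as in the definition of $\mathrm{tr}_{0,h}$, and the goal is to prove $I(\tau,f)=\hat{G}(f)(\tau)$.

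Modularity in $\tau$ is inherited directly: since $\Theta(\cdot,z)\in A_{1/2,L}(\Gamma')$ for each fixed $z$ and the $\Gamma'$-action on $\tau$ commutes with the regularized $z$-integral, $I(\cdot,f)$ again transforms with weight $\frac12$ and representation $\rho_L$. For harmonicity I would invoke the intertwining identity satisfied by the Millson kernel, a relation of the form $\Delta_{1/2,\tau}\Theta = C\,\Delta_{0,z}\Theta$ up to an explicit constant $C$ and a total $z$-derivative, where $\Delta_{0,z}$ is the invariant hyperbolic Laplacian in $z$. Applying $\Delta_{1/2,\tau}$ under the integral sign, substituting this identity, and integrating by parts over $M$ against $f$---which is weakly holomorphic and hence satisfies $\Delta_{0,z}f=0$---gives $\Delta_{1/2,\tau}I(\tau,f)=0$, provided the boundary contributions from $\partial M_T$ vanish as $T\to\infty$.

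The identification $I(\tau,f)=\hat{G}(f)$ is the computational core. I would unfold the integral by decomposing $\Theta$ along the $\Gamma$-orbits $\Gamma\setminus L_{m,h}$ and inserting the cusp expansions $f_l(z)=\sum_n a_l(n)e^{2\pi inz}$ of $f$. Orbits of negative length $m<0$ produce the CM values $f(z_X)$, and evaluating the archimedean $z$-integral against the Gaussian-type kernel yields the factor $\frac{\mathrm{erfc}(2\sqrt{\pi|m|v})}{2\sqrt{|m|}}$; orbits of positive non-square length $m>0$ give the regularized cycle integrals $\int^{\reg}_{c(X)}f\,dz_X$, i.e. the terms $\mathrm{tr}_{m,h}(f)e^{2\pi im\tau}$; the index $m=0$ contributes $-2\sqrt{v}\,\mathrm{tr}_{0,h}(f)$ from the regularized average of $f$ over $M$; and the square-class lengths $m=Nn^2$ with $n\in\ZZ_{>0}$ yield the complementary traces $\mathrm{tr}^c_{Nn^2,h}(f)$ weighted by $\int_0^{\sqrt v}e^{4\pi Nn^2w^2}\,dw$, arising from the non-vanishing boundary terms of the truncation near each cusp. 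A final check of linear exponential growth at all cusps then certifies $\hat{G}(f)\in A_{1/2,L}(\Gamma')$ as a harmonic weak Maass form.

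The main obstacle will be the regularization together with the degenerate contributions. Since $f$ is only \emph{weakly} holomorphic it grows exponentially at the cusps, so the theta integral diverges and the truncation $M_T$ is indispensable; controlling the growth of $f\cdot\Theta$ near each cusp, showing that the integration-by-parts boundary terms vanish, and---most delicately---extracting the precise cuspidal boundary limits that produce the $\mathrm{tr}^c_{Nn^2,h}$ terms and the $m=0$ term with their exact normalizations is where the genuine difficulty lies. By contrast, the archimedean integral evaluations that match the $\mathrm{erfc}$ and $\int_0^{\sqrt v}e^{4\pi Nn^2w^2}\,dw$ factors are routine once the kernel is fixed, though pinning down the constant $C$ in the Laplacian identity and the normalization of $\varphi$ so that every numerical factor in the stated formula agrees still demands care.
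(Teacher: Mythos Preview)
The paper does not prove this theorem at all: it is quoted verbatim as \cite[Theorem 4.1]{BFI} and used as a black box, so there is no ``paper's own proof'' to compare against. Your outline is in fact a faithful sketch of the argument in the cited source \cite{BFI}, where $\hat{G}(f)$ is realized as the regularized Millson theta lift of $f$ and the Fourier expansion is obtained by unfolding over $\Gamma$-orbits in $L_{m,h}$; so as a plan for proving the cited result your approach is the right one, with the regularization near the cusps and the degenerate (isotropic and square-length) contributions indeed being the delicate points.
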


We consider the lattice
\begin{equation} \label{lattice}
L = \left\{ \sm b & c/N \\ a & -b\esm\bigg|\ a,b,c\in\ZZ\right\}.
\end{equation}
Its dual lattice is
\[
L' = \left\{ \sm b & c/N\\ a & -b\esm \bigg|\ a,c\in\ZZ, b\in\frac1{2N}\ZZ\right\}.
\]
Then, given $g = \sum_{h\in L'/L} g_h \mbf{e}_h \in A_{k,L}(\Gamma')$,
the sum
\[
 \sum_{h\in L'/L} g_h(4Nz)
\]
gives a scalar-valued form of weight $k$ for $\Gamma_0(4N)$.
From this, we have the following corollary.

\begin{cor} \label{scalar}
Let $L$ be a lattice given in (\ref{lattice}).
Let $f\in M^!_0(\Gamma_0(N))$ be a weakly holomorphic modular function whose constant coefficients $a_l(0)$ vanish at all cusps $l$.
Then,
\begin{eqnarray*}
\hat{H}(f)(z) &:=& \sum_{d>0} \mathrm{Tr}_d(f)e^{2\pi idz}
-2\mathrm{Tr}_0(f)\sqrt{y} + \sum_{d<0} \mathrm{Tr}_d(f)\frac{\mathrm{erfc}(2\sqrt{\pi|d|y})}{\sqrt{|d|}} e^{2\pi idz}\\
&& + \frac1{\sqrt{N}}\sum_{d>0} \mathrm{Tr}^c_{d^2}(f)\left( \int^{2\sqrt{Ny}}_{0} e^{\pi d^2w^2/N}dw \right) e^{2\pi id^2z}
\end{eqnarray*}
is a harmonic weak Maass form in $H_{\frac12}(\Gamma_0(4N))$, where $\mathrm{Tr}^c_{d^2}(f) = \sum_{h\in L'/L} tr^c_{\frac{d^2}{4N},h}(f)$.
In particular, we have
\[
W(f) := -\frac12 \xi_{\frac12}\left(\hat{H}(f)\right) = -\frac12 \sum_{d>0} \overline{\mathrm{Tr}^c_{d^2}(f)}q^{-d^2}   + \frac12\overline{\mathrm{Tr}_0(f)} +
\sum_{d<0} \overline{\mathrm{Tr}_d(f)}q^{-d}.
\]
\end{cor}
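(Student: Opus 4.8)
The plan is to obtain $\hat H(f)$ by pushing the vector-valued form of Theorem \ref{mainofBFI} down to $\Gamma_0(4N)$. By Theorem \ref{mainofBFI}, the generating series $\hat G(f) = \sum_{h\in L'/L}\hat G(f)_h\mbf e_h$, attached to the lattice $L$ of (\ref{lattice}), is a harmonic weak Maass form of weight $\frac12$ for $\Gamma'$ with representation $\rho_L$. I would apply to it the averaging map $g = \sum_h g_h\mbf e_h\mapsto\sum_h g_h(4Nz)$ recorded just before the corollary, which carries $A_{\frac12,L}(\Gamma')$ into scalar-valued forms of weight $\frac12$ on $\Gamma_0(4N)$. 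Since this map is linear, since the weight-$\frac12$ Laplacian $\Delta_{\frac12}$ commutes with the $\GL_2^+(\RR)$-action so that harmonicity is preserved under $z\mapsto 4Nz$, and since the at-most-linear-exponential growth of $\hat G(f)$ at the cusps of $\Gamma'$ descends to the required growth at the cusps of $\Gamma_0(4N)$, the image $\sum_h\hat G(f)_h(4Nz)$ automatically lies in $H_{\frac12}(\Gamma_0(4N))$. The whole first assertion then reduces to the identity
\[
\hat H(f)(z) = \sum_{h\in L'/L}\hat G(f)_h(4Nz),
\]
which I would verify by comparing Fourier expansions term by term.

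To carry out the comparison I would set up the dictionary between the lattice data attached to $L$ in (\ref{lattice}) and the integral binary quadratic forms $\mc Q_{d,N}$ used to define $\Tr_d$ in Section \ref{tracesingular}. Under the standard identification a vector $X\in L'$ with $Q(X)=m$ corresponds to a form in $\mc Q_{d,N}$ with $d=4Nm$ (up to the sign and orientation conventions fixed in Section \ref{tracesingular}), the coset $h\in L'/L$ recording the relevant residue class; running over all $h$ reconstructs the full orbit space $\Gamma_0(N)\setminus\mc Q_{d,N}$, the CM point $z_X$ matches $z_Q$, the stabilizer orders agree, and the line measure $dz_X$ matches $\frac{dz}{Q(z,1)}$. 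This yields $\Tr_d(f)=\sum_h\mathrm{tr}_{m,h}(f)$ for $d=4Nm$ and, at the squares, $\Tr^c_{d^2}(f)=\sum_h\mathrm{tr}^c_{\frac{d^2}{4N},h}(f)$ as in the statement. I would then feed the substitution $z\mapsto 4Nz$ into each of the four types of terms of $\hat G(f)_h$: the holomorphic part becomes $\sum_{d>0}\Tr_d(f)e^{2\pi idz}$; the constant-index term becomes $-2\Tr_0(f)\sqrt y$; the relation $|d|=4N|m|$ turns $\mathrm{erfc}(2\sqrt{\pi|m|y})$ into $\mathrm{erfc}(2\sqrt{\pi|d|y})$ while converting the normalizing factor $\tfrac1{2\sqrt{|m|}}$; and, because the complementary term carries the \emph{quadratic} index $Nm^2$, the separate relation forces $4\pi Nm^2w^2\mapsto\pi d^2w^2/N$, the endpoint $\sqrt y\mapsto 2\sqrt{Ny}$, and the stated prefactor $\frac1{\sqrt N}$. \textbf{The main obstacle is precisely this constant bookkeeping}: confirming that the factors of $\sqrt N$ and $2$ introduced by $Q=-N\det$, by the scaling $z\mapsto 4Nz$, and by the half-integral (metaplectic) normalization combine to reproduce exactly the coefficients $\frac1{\sqrt{|d|}}$ and $\frac1{\sqrt N}$ appearing in the definition of $\hat H(f)$.

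For the final (``in particular'') assertion I would compute $\xi_{\frac12}(\hat H(f))$ directly, term by term. The holomorphic part $\sum_{d>0}\Tr_d(f)e^{2\pi idz}$ is annihilated by $\xi_{\frac12}$. Using $\partial_{\bar z}\sqrt y=\tfrac{i}{4\sqrt y}$ one finds $\xi_{\frac12}(-2\Tr_0(f)\sqrt y)=-\overline{\Tr_0(f)}$. For the error-function terms, since $e^{2\pi idz}$ is holomorphic and $\mathrm{erfc}'(w)=-\tfrac2{\sqrt\pi}e^{-w^2}$, a short computation gives $\xi_{\frac12}\!\left(\tfrac1{\sqrt{|d|}}\mathrm{erfc}(2\sqrt{\pi|d|y})e^{2\pi idz}\right)=-2q^{-d}$ for $d<0$, hence a contribution $-2\overline{\Tr_d(f)}q^{-d}$. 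Likewise, differentiating under the integral sign, $\xi_{\frac12}\!\left(\tfrac1{\sqrt N}\big(\int_0^{2\sqrt{Ny}}e^{\pi d^2w^2/N}dw\big)e^{2\pi id^2z}\right)=q^{-d^2}$, hence a contribution $\overline{\Tr^c_{d^2}(f)}q^{-d^2}$. Collecting these and multiplying by $-\tfrac12$ would give exactly
\[
W(f)=-\frac12\sum_{d>0}\overline{\Tr^c_{d^2}(f)}q^{-d^2}+\frac12\overline{\Tr_0(f)}+\sum_{d<0}\overline{\Tr_d(f)}q^{-d},
\]
which is the displayed formula, completing the argument.
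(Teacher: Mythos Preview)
Your overall strategy coincides with the paper's: apply the averaging map $g\mapsto\sum_h g_h(4Nz)$ to the vector-valued form of Theorem~\ref{mainofBFI}, set up the dictionary between lattice vectors $X\in L'$ and quadratic forms in $\mathcal Q_{d,N}$ via $d=4Nm$, and read off the four types of Fourier terms. Your direct computation of $\xi_{\frac12}(\hat H(f))$ at the end is correct and is actually more than the paper provides (the paper simply states the formula for $W(f)$).

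However, the bookkeeping you flag as ``the main obstacle'' is genuinely wrong as you have written it, and if left as is the argument fails. The identity is not $\hat H(f)(z)=\sum_h\hat G(f)_h(4Nz)$ but
\[
\hat H(f)(z)=\frac{1}{2\sqrt N}\sum_{h\in L'/L}\hat G(f)_h(4Nz),
\]
and the trace dictionary is not $\Tr_d(f)=\sum_h\mathrm{tr}_{m,h}(f)$ uniformly. The correct relations are
\[
\sum_{h}\mathrm{tr}_{m,h}(f)=2\,\Tr_{4Nm}(f)\quad(m<0),\qquad
\sum_{h}\mathrm{tr}_{m,h}(f)=2\sqrt N\,\Tr_{4Nm}(f)\quad(m>0),
\]
together with $\mathrm{tr}_{0,0}(f)=\Tr_0(f)$. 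The factor $2$ for $m<0$ arises because $L_{m,h}$ contains both positive and negative definite vectors ($X$ and $-X$ have the same CM point $z_X$), whereas $\Tr_d(f)$ counts only positive definite forms. The factor $2\sqrt N$ for $m>0$ comes from the line measure: under the correspondence $X\leftrightarrow g_X$ one has $dz_X=2\sqrt N\,\frac{dz}{g_X(z,1)}$, not $dz_X=\frac{dz}{g_X(z,1)}$ as your ``line measure matches'' claim suggests. With these factors in place, the prefactor $\frac{1}{2\sqrt N}$ cancels correctly in each block and the four Fourier pieces emerge as stated; without them the coefficients are off by different constants in different ranges of $d$, and no single global rescaling repairs the identity.
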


Although this corollary was explained in \cite{BFI}, we will sketch the proof
for the convenience of the reader (see also \cite[Section 6]{BF2}).

\begin{proof} [\bf Proof of Corollary \ref{scalar}]
From the vector-valued harmonic weak Maass form $\hat{G}(f)$ on $\Gamma'$ in Theorem \ref{mainofBFI}, we obtain a scalar-valued harmonic weak Maass form in $H_{\frac12}(\Gamma_0(4N))$
\begin{eqnarray*}
\frac1{2\sqrt{N}} \sum_{h\in L'/L} \hat{G}(f)_h(4Nz) &=&
-2\sqrt{y}\mathrm{tr}_{0,0}(f) + \frac1{4\sqrt{N}}\sum_{m<0} \sum_{h\in L'/L} \mathrm{tr}_{m,h}(f) \frac{\mathrm{erfc}(4\sqrt{\pi|m|Ny})}{\sqrt{|m|}}e^{2\pi i(4Nm)z}\\
&&+ \frac{1}{2\sqrt{N}}\sum_{m>0}\sum_{h\in L'/L} \mathrm{tr}_{m,h}(f)e^{2\pi i(4Nm)z}\\
&& + \frac1{\sqrt{N}}\sum_{m>0} \sum_{h\in L'/L} \mathrm{tr}^c_{Nm^2,h}(f)\left( \int^{2\sqrt{Ny}}_{0} e^{4\pi Nm^2w^2}dw\right) e^{2\pi i(2Nm)^2z}.
\end{eqnarray*}

To $X = \sm b&c/N \\ a & -b\esm \in L'$ with $a,c\in \ZZ$ and $b\in \frac1{2N}\ZZ$, we can correspond an integral binary quadratic form $g_X(x,y) = -aNx^2 + 2Nbxy + cy^2$.
If $Q(X) = m\in \frac1{4N}\ZZ$, then the discriminant of $g_X$ is $4Nm\in \ZZ$.
With this correspondence, if $m<0$, we have
\begin{eqnarray} \label{rel1}
\sum_{h\in L'/L} \mathrm{tr}_{m,h}(f) = 2\mathrm{Tr}_{4Nm}(f).
\end{eqnarray}
The number $2$ comes from the fact that the left hand side counts positive and negative definite binary quadratic forms of discriminant $m$.
If $m>0$, then we have
\begin{equation} \label{rel2}
\sum_{h\in L'/L} \mathrm{tr}_{m,h}(f) = 2\sqrt{N}\mathrm{Tr}_{4Nm}(f).
\end{equation}
The number $2\sqrt{N}$ comes from the fact that $dz_X = 2\sqrt{N}\frac{dz}{g_X(z,1)}$.
Finally, note that
\begin{equation} \label{rel3}
\mathrm{tr}_{0,0}(f) = \mathrm{Tr}_0(f).
\end{equation}
From the relations (\ref{rel1}), (\ref{rel2}), and (\ref{rel3}), we can see that $\frac1{2\sqrt{N}} \sum_{h\in L'/L} \hat{G}(f)_h(4Nz)= \hat{H}(f)(z)$.
\end{proof}

\begin{rmk} \label{squarefreeformula}
If $N$ is a square-free positive integer, then $\Gamma_0^*(N)$ has only one cusp.
Hence, if $f$ is a weakly holomorphic modular function on $\Gamma_0^*(N)$, then, by (\ref{trc}), we have
\[
\mathrm{Tr}^c_{d^2}(f) = 2d\sum_{n<0} a(dn).
\]
\end{rmk}

\section{Regularized Eichler integrals} \label{regular}


Let $g$ be a weakly holomorphic modular form in $M_{k}^{!}(\Gamma_0(4N))$.
Let $z_1\in \HH$ and $z_2, \alpha,\beta,s\in \CC$.
Assume that, for each $\gamma = \sm a&b\\c&d\esm\in\SL_2(\ZZ)$, the function $(cz+d)^{-k}g(\gamma z)$ has a Fourier expansion of the form as in (\ref{fourierq}).
For a complex number $z$, let $\mathrm{Re}(z)$ and $\mathrm{Im}(z)$ denote the real and imaginary part of $z$, respectively.

We consider an integral of the form
\begin{equation} \label{inside}
I_s(g,z_1, z_2, \alpha,\beta) := \int^{i\infty}_{z_1} g(\tau)(\tau-\overline{z_2})^{s+\alpha}e^{2\pi i\beta\tau}d\tau.
\end{equation}
Let us fix  $z_1$ and $z_2$ such that $\mathrm{Im}(z_1) + \mathrm{Im}(z_2)>0$.
For $\mathrm{Re}(\alpha) \ll0$ and $\mathrm{Re}(\beta)\gg0$, the above integral converges absolutely and defines a holomorphic function of $(\alpha,\beta)$.
If, for a fixed $\alpha$ such that $\mathrm{Re}(\alpha)\ll 0$, the integral $I_s(g,z_1, z_2, \alpha,\beta)$ has  analytic continuation at $\beta=0$, then the value of this analytic continuation at $\beta=0$ is denoted by $\left[ I_{s}(g,z_1, z_2, \alpha,\beta)\right]_{\beta=0}$.
If $\left[ I_{s}(g,z_1, z_2, \alpha,\beta)\right]_{\beta=0}$ has  analytic continuation at $\alpha=0$, then the value of this analytic continuation at $\beta=0$ is denoted by $\left[ \left[ I_{s}(g,z_1, z_2, \alpha,\beta)\right]_{\beta=0} \right]_{\alpha=0}$.
If these two continuations exist, we define the
 {\it  regularized Eichler integral associated with $g$} by
\begin{equation} \label{regularEichler}
\int^{\reg}_{[z_1,i\infty]} g(\tau)(\tau-\overline{z_2})^{s}d\tau := \left[ \left[ I_{s}(g,z_1, z_2, \alpha,\beta)\right]_{\beta=0} \right]_{\alpha=0}.
\end{equation}

More generally, if we let $r\in\QQ$ and $\gamma = \sm a&b\\c&d\esm\in\SL_2(\ZZ)$ such that $r = \gamma(i\infty)$,
then the {\it regularized Eichler integral associated with $g$ for a cusp $r$} is defined by
\begin{equation} \label{othercusp}
\int^{\reg}_{[z_1,r]} g(\tau)(\tau-\overline{z_2})^{s}d\tau := \left[ \left[ \int^{i\infty}_{\gamma^{-1}(z_1)} (c\tau+d)^{-2+\alpha}g(\gamma\tau)(\gamma\tau-\overline{z_2})^{s+\alpha}e^{2\pi i\beta\tau}d\tau \right]_{\beta=0} \right]_{\alpha=0}.
\end{equation}
Since $\chi_{\theta}(T) = 1$, this definition is independent of the choice of $\gamma$, i.e.,  the right hand side of (\ref{othercusp}) remains the same
when we replace $\gamma$ by $\pm \gamma T^n$ for $n\in\ZZ$, where $T = \sm 1&1\\0&1\esm$.

For two distinct cusps $r_1$ and  $r_2$, we define
\begin{equation} \label{twocusp}
\int^{\reg}_{[r_1,r_2]} g(\tau)(\tau-\overline{z_2})^{s}d\tau
:= -\int^{\reg}_{[z_1,r_1]} g(\tau)(\tau-\overline{z_2})^{s}d\tau + \int^{\reg}_{[z_1,r_2]} g(\tau)(\tau-\overline{z_2})^{s}d\tau,
\end{equation}
for any $z_1\in\HH$. Let us note that a regularized integral in (\ref{twocusp}) is independent of the choice of $z_1$ (see Lemma \ref{independent} in Section \ref{Well-definedness of regularized Eichler integrals}). In this section, we prove the following theorem.

\begin{thm} \label{maintheoremsection3}
Let $z_1\in\HH$, $r\in\QQ\cup\{i\infty\}$ and $z_2\in \HH$.
Let $f\in H_{2-k}(\Gamma_0(4N))$ and $g = \xi_{2-k}(f) \in M^!_{k}(\Gamma_0(4N))$. If $k\neq1$, then the regularized Eichler integral
$$
\int^{\reg}_{[z_1,r]} g(\tau)(\tau-\overline{z_2})^{k-2}d\tau
$$
is well defined for $z_1$ and $z_2$.
Moreover, we have
\[
\xi_{2-k}\left(\overline{\int^{\reg}_{[z,i\infty]} g(\tau)(\tau-\overline{z})^{k-2}d\tau}\right) = -(2i)^{k-1}g(z).
\]
and
\[
(f^- - f^-|_{2-k}\gamma)(z) = -\frac{1}{(-2i)^{k-1}} \overline{\int^{\reg}_{[\gamma^{-1}
(i\infty),i\infty]} g(\tau)(\tau-\bar{z})^{k-2}d\tau}
\]
for each $\gamma\in \Gamma_0(4N)$ and $z_1\in \HH$.
\end{thm}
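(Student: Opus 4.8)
The plan is to prove the three assertions in turn: first well-definedness, then the two ``Moreover'' identities, reducing the automorphy relation to an identification of $f^-$ with a nonholomorphic Eichler integral of its shadow.

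\emph{Well-definedness.} First I would insert the Fourier expansion $g(\tau)=\sum_{n\gg-\infty}b(n)e^{2\pi in\tau}$ into the defining integral $I_s(g,z_1,z_2,\alpha,\beta)$ and integrate term by term along the vertical ray from $z_1$ to $i\infty$. After the substitution $w=\tau-\overline{z_2}$ each term becomes, up to elementary factors, the product of $e^{2\pi i(n+\beta)\overline{z_2}}(n+\beta)^{-(s+\alpha+1)}$ with an incomplete gamma value $\Gamma\!\left(s+\alpha+1,\,-2\pi i(n+\beta)(z_1-\overline{z_2})\right)$; here $z_1-\overline{z_2}$ has strictly positive imaginary part because $\mathrm{Im}(z_1)+\mathrm{Im}(z_2)>0$, so the second argument never vanishes. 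Since $\Gamma(a,x)$ is entire in $a$ for $x\neq0$, each term with $n+\beta\neq0$ continues holomorphically in $\alpha$ to $\alpha=0$ and in $\beta$ to $\beta=0$; the single exceptional term $n=0$ carries the pole in $\beta$ whose finite part defines the continuation, which is exactly the source of the constant-term correction in (\ref{generalcuspL}). The hypothesis $k\neq1$, i.e. $s=k-2\neq-1$, keeps us in the power-law rather than logarithmic normalization. For a general cusp $r=\gamma(i\infty)$ I would run the same computation on $(c\tau+d)^{-k}g(\gamma\tau)$ using the expansion (\ref{fourierq}), with $n+\kappa_\gamma$ in place of $n$, the term $n+\kappa_\gamma=0$ being treated exactly as the $n=0$ case. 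This yields both the double continuation and, as a byproduct, an explicit Fourier expansion of the regularized integral to be used below.

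\emph{The $\xi$-identity.} Writing $\Phi(z):=\int^{\reg}_{[z,i\infty]}g(\tau)(\tau-\overline{z})^{k-2}\,d\tau$, I would compute $\xi_{2-k}(\overline{\Phi})=2iy^{2-k}\,\overline{\partial_{\overline z}\,\overline{\Phi}}=2iy^{2-k}\,\partial_z\Phi$. The only holomorphic $z$-dependence of $\Phi$ is through the lower endpoint, since the factor $\overline{z}$ in the integrand is anti-holomorphic and hence invisible to $\partial_z$, so the fundamental theorem of calculus gives $\partial_z\Phi=-g(z)(z-\overline z)^{k-2}=-g(z)(2iy)^{k-2}$. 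Substituting yields $\xi_{2-k}(\overline{\Phi})=-2iy^{2-k}g(z)(2i)^{k-2}y^{k-2}=-(2i)^{k-1}g(z)$, as claimed. The one point needing justification is that differentiation commutes with the regularization: for $\mathrm{Re}(\alpha)\ll0,\ \mathrm{Re}(\beta)\gg0$ the integral converges, differentiation under the integral and the endpoint boundary term are legitimate, and both sides are analytic in $(\alpha,\beta)$, so the identity persists to $\alpha=\beta=0$ by analytic continuation.

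\emph{The automorphy identity.} I would first upgrade the $\xi$-identity to the function-level statement $f^-(z)=-\tfrac{1}{(-2i)^{k-1}}\overline{\Phi(z)}$. Indeed $\xi_{2-k}(f^-)=\xi_{2-k}(f)=g$, and a short anti-linearity computation shows $\xi_{2-k}\!\left(-\tfrac{1}{(-2i)^{k-1}}\overline{\Phi}\right)=g$ as well, so the two differ by a holomorphic function; to see that this difference vanishes I would compare the full real-analytic Fourier expansions at $i\infty$, matching the explicit incomplete-gamma expansion of $\overline{\Phi}$ from the first step against the $B_{2-k}$-expansion (\ref{minuspart}) of $f^-$ via the standard relation between the coefficients $a^-(n)$ and the shadow coefficients $b(n)$. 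Granting $f^-=-\tfrac{1}{(-2i)^{k-1}}\overline{\Phi}$, the automorphy formula becomes a transformation computation: substituting $\tau=\gamma\sigma$ in $\Phi(\gamma z)$, using $\overline{\gamma z}=\gamma\overline z$, the weight-$k$ modularity $g(\gamma\sigma)=\chi_\theta(\gamma)^{2k}(c\sigma+d)^k g(\sigma)$ and the identity $\gamma\sigma-\gamma\overline z=(\sigma-\overline z)/((c\sigma+d)(c\overline z+d))$, the automorphy factors $(c\sigma+d)$ cancel completely since their exponents sum to $0$, and $\chi_\theta^4=1$ collapses the multiplier. The additivity relation (\ref{twocusp}) rewrites $\int_{[z,\gamma^{-1}(i\infty)]}$ as $\int_{[z,i\infty]}-\int_{[\gamma^{-1}(i\infty),i\infty]}$, and passing through the slash operator $|_{2-k}\gamma$ produces exactly $f^--f^-|_{2-k}\gamma=-\tfrac{1}{(-2i)^{k-1}}\overline{\int^{\reg}_{[\gamma^{-1}(i\infty),i\infty]}g(\tau)(\tau-\overline z)^{k-2}\,d\tau}$.

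\emph{Main obstacle.} I expect the delicate part to be the first step together with the Fourier-matching in the third: controlling the two-variable analytic continuation uniformly across the finitely many principal-part terms and the constant term, checking that the $n+\kappa_\gamma=0$ exceptional terms at a general cusp produce precisely the normalizing summands, and — most of all — tracking the branch conventions for the powers $(\tau-\overline{z_2})^{k-2}$ and $(2i)^{k-1}$ consistently so that the constants in the $\xi$-identity and in $f^-=-\tfrac{1}{(-2i)^{k-1}}\overline{\Phi}$ come out exactly. The transformation computation itself is routine once the modularity of $g$ and $\chi_\theta^4=1$ are in hand.
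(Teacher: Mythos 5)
Your proposal is correct and, in architecture, is essentially the paper's own proof: the term-by-term incomplete-gamma continuation is Lemmas \ref{gammaasymptotic}, \ref{uniform}, and \ref{generalpoint} (the paper packages your $\Gamma(s+\alpha+1,\cdot)$ with complex second argument as the function $A(x,y,s)$, whose asymptotics supply the uniform convergence you flag as the main obstacle); the general-cusp case via the cocycle identity for $(\gamma\tau-\overline{z_2})^{k-2+\alpha}$ is Lemma \ref{welldefinedforcusp}; and your final step --- the additivity (\ref{twocusp}) plus the transformation law (\ref{slashpart}) applied to $f^-=-\frac{1}{(-2i)^{k-1}}\overline{\Phi}$ --- is exactly Lemma \ref{periodfunction}, including the collapse of the multiplier via $\chi_\theta^4=1$. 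The one genuine divergence is your proof of the $\xi$-identity: you differentiate the regularized integral at its endpoint (fundamental theorem of calculus, $\partial_z\Phi=-g(z)(2iy)^{k-2}$) and justify the interchange with the regularization by analytic continuation, whereas the paper (Remark \ref{nonholo}(5)) instead reads off the explicit $A$-function Fourier expansion from Lemma \ref{generalpoint} with $z_1=z_2=z$, recognizes it as a $B_{2-k}$-series of the shape (\ref{minuspart}), and applies $\xi_{2-k}$ term by term; your route is slicker but needs precisely the term-by-term differentiation estimate that the paper only develops later, in the proof of Lemma \ref{coboundary}, so nothing is missing in substance. One local misstatement to fix: the exceptional $n=0$ term does \emph{not} carry a pole in $\beta$ --- for fixed $\mathrm{Re}(\alpha)\ll0$ it is analytic at $\beta=0$ (the factor $e^{2\pi i\beta\tau}$ is needed only to tame the principal-part terms $n<0$); the obstruction for the constant term sits in $\alpha$, through the factor $1/(s+1+\alpha)$ of $S_2$, which is why the continuation at $\alpha=0$ exists exactly when $s=k-2\neq-1$ --- a condition you do state correctly immediately afterwards, so the slip does not propagate.
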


\begin{rmk}
It was noted by Bruggeman that one can also use the method in Section 3 of \cite{BCD} to define the regularized Eichler integrals
and to prove their properties obtained in this paper.
\end{rmk}

\subsection{Asymptotic behavior of a generalized incomplete gamma function $A(x,y,s)$}
To prove the well-definedness of the regularized Eichler integral $\int^{\reg}_{[z_1, i\infty]} g(\tau)(\tau-\overline{z_2})^s d\tau$, we study the function
\[
A(x,y,s) := \int^{\infty}_{x} e^{-t}(t+iy)^s dt
\]
for $x, y\in \RR$ and $s\in \CC$.
Note that if $y\neq 0$, then the function $A(x,y,s)$ is well defined for $x\in\RR$ and $y\in \RR$.
If $y = 0$, then we define $A(x,y, s)$ as the analytic continuation of
the incomplete gamma function $\Gamma(s+1, x)$.
This function appears in series expressions of the regularized Eichler integrals (see Lemma \ref{generalpoint}).

It is known that the incomplete gamma function satisfies the following asymptotic properties
\begin{equation} \label{incompletegamma}
\Gamma(s+1, x) \sim e^{-x} x^{s}
\end{equation}
as $x\to \pm\infty$ (for example see \cite[Section 3]{BF}).
In the following lemma, as $x\to \pm\infty$, we prove the asymptotic properties of $A(x,y,s)$ for  general $y$ and $s$.

\begin{lem} \label{gammaasymptotic}
We assume the above notation.
\begin{enumerate}
\item For each fixed $y\in \RR$, we have $A(x, y, s) \sim e^{-x}(x+iy)^s$ as $x \to \pm\infty$.
\item Let $c(x) = ax + b$ for $a, b\in \RR$. Then we have $A(x, c(x), s) \sim e^{-x}(x+ic(x))^s$ as $x\to\infty$.
\end{enumerate}
\end{lem}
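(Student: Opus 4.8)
The plan is to reduce both statements to the known asymptotic \eqref{incompletegamma} for the incomplete gamma function by extracting the dominant factor $e^{-x}(x+iy)^s$ and showing the remaining quotient tends to $1$. For part (1), with $y$ fixed, the idea is to write
\[
A(x,y,s) = \int_x^\infty e^{-t}(t+iy)^s\,dt = e^{-x}(x+iy)^s \int_0^\infty e^{-u}\left(1+\frac{u}{x+iy}\right)^s du,
\]
after the substitution $t = x+u$. As $x\to\pm\infty$, the factor $(1+u/(x+iy))^s$ converges to $1$ pointwise in $u$, and the plan is to apply dominated convergence: one bounds $|(1+u/(x+iy))^s|$ by an integrable majorant (uniformly for large $|x|$) so that the integral converges to $\int_0^\infty e^{-u}\,du = 1$. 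The bound is routine once one notes that for $|x|$ large the quantity $1+u/(x+iy)$ stays in a fixed sector avoiding the negative real axis, so the complex power is controlled by $(1+u/|x+iy|)^{\re s}$ or a similar elementary estimate, giving an $e^{-u}$-integrable dominating function. This yields $A(x,y,s)/\bigl(e^{-x}(x+iy)^s\bigr)\to 1$, which is exactly the claimed asymptotic.

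For part (2), where now $c(x)=ax+b$ grows linearly, the same substitution $t=x+u$ gives
\[
A(x,c(x),s) = e^{-x}(x+ic(x))^s \int_0^\infty e^{-u}\left(1+\frac{u}{x+ic(x)}\right)^s du,
\]
and since $|x+ic(x)| = |x+i(ax+b)|\to\infty$ as $x\to\infty$, the inner quotient again converges to $1$ pointwise with a uniform integrable majorant. Here the key point to verify is that $\arg(x+ic(x))$ stays bounded away from $\pm\pi$ as $x\to\infty$, so that $1+u/(x+ic(x))$ remains in a sector where the power $(\cdot)^s$ is single-valued and tamely bounded; this holds because $x+ic(x) = x(1+ia)+ib$ lies asymptotically along a fixed ray in the right half of the plane. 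With that observation, dominated convergence applies verbatim and yields the result.

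The main obstacle I anticipate is not the convergence itself but the uniformity of the dominating bound: one must choose the majorant for $|(1+u/w)^s|$ (with $w=x+iy$ or $w=x+ic(x)$) so that it is integrable against $e^{-u}$ \emph{and} independent of $x$ for all sufficiently large $|x|$, taking care over the branch of the complex power and the sign of $\re s$. Splitting the $u$-integral into the ranges $u\le |w|/2$ and $u>|w|/2$ is the cleanest way to handle this: on the first range $1+u/w$ is close to $1$ and the integrand is comparable to $e^{-u}$, while on the second range the factor $e^{-u}$ already decays fast enough to swamp any polynomial-in-$u$ growth of $(1+u/w)^s$. Once the estimate is arranged on these two pieces, both parts follow immediately.
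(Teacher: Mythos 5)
Your proposal is correct in substance but takes a genuinely different route from the paper. The paper integrates by parts $n$ times, with $n\geq 2$ chosen so that $\re(s)-n+1<0$: the dominant term $e^{-x}(x+ic(x))^s$ then appears as the first boundary term, and the remainder integral is estimated directly via $0<|t|\leq|t+ic(x)|$ and $|\arg(t+ic(x))|\leq|\arg(x+ic(x))|$, with the case $x\to-\infty$ handled by splitting the integral at $t=-1$. You instead substitute $t=x+u$, factor out $e^{-x}(x+iy)^s$, and show the normalized integral $\int_0^\infty e^{-u}(1+u/(x+iy))^s\,du$ tends to $1$ by a split dominated-convergence estimate. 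Your route is shorter and avoids all boundary-term bookkeeping; its cost is the branch subtlety in the factorization $(x+u+iy)^s=(x+iy)^s\bigl(1+u/(x+iy)\bigr)^s$, which the paper's integration-by-parts scheme never has to confront because it works with $(t+iy)^{s-n}$ directly. With the paper's convention $-\pi<\arg(z)\leq\pi$ the factorization does hold for $y\neq0$, since one checks $\arg(x+iy)+\arg\bigl(1+u/(x+iy)\bigr)\in(-\pi,\pi)$ throughout.

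Two points need patching. First, your claim that $1+u/(x+iy)$ stays in a fixed sector avoiding the negative real axis is false as $x\to-\infty$: for $y>0$ and $u=2|x|$ one computes $1+u/(x+iy)\to-1$. Fortunately your splitting does not actually need the sector: with the principal branch one always has $\bigl|(1+u/(x+iy))^s\bigr|\leq|1+u/(x+iy)|^{\re(s)}e^{\pi|\im(s)|}$, and on the tail $u>|x+iy|/2$ the imaginary-part lower bound $|1+u/(x+iy)|\geq u|y|/|x+iy|^2$ gives, when $\re(s)<0$, a bound of size $(2|x+iy|/|y|)^{|\re(s)|}e^{-|x+iy|/2}\to0$. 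Note this growth is in $|x|$, not ``polynomial in $u$'' as you wrote, so for $\re(s)<0$ and $x\to-\infty$ the tail must be estimated directly rather than dominated by an $x$-independent majorant; only the range $u\leq|x+iy|/2$ is handled by dominated convergence. Second, for $y=0$ and $x\to-\infty$ your integral representation is unavailable: the contour passes through the branch point at $t=0$ (and the integral diverges for $\re(s)\leq-1$), which is why the paper defines $A(x,0,s)$ as the analytic continuation of $\Gamma(s+1,x)$ and, in this case, simply quotes the known asymptotic \eqref{incompletegamma} in one line. You should do the same.
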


\begin{proof} [\bf Proof of Lemma \ref{gammaasymptotic}]
First, we consider the case when $x$ goes to $\infty$.
In this case, we only need to prove the part (2) since the part (1) comes by setting $c(x) = y$.
If we use the integration by parts, $A(x,c(x), s)$ is equal to
\begin{eqnarray} \label{axys0}
\nonumber && e^{-x}(x+ic(x))^{s} + s e^{-x}(x+ic(x))^{s-1} + \cdots +
s(s-1)\cdots (s-(n-2))e^{-x}(x+ic(x))^{s-(n-1)}\\
 &&+ s(s-1)\cdots (s-(n-1))\int^{\infty}_{x} e^{-t}(t+ic(x))^{s-n}dt
\end{eqnarray}
for $x>0$ and $n>0$.
We take $n$ such that $n\geq 2$ and $\mathrm{Re}(s)-n+1<0$.
Note that
\begin{eqnarray*}
e^{x} \left|\int^{\infty}_{x} e^{-t}(t+ic(x))^{s-n}dt\right| &\leq& \int^{\infty}_{x}\left| e^{x-t}(t+ic(x))^{s-n}\right| dt \leq \int^{\infty}_{x}\left| (t+ic(x))^{s-n}\right| dt
\end{eqnarray*}
and
\begin{eqnarray} \label{complexexponent}
|(t+ic(x))^{s-n}| = |t+ic(x)|^{\mathrm{Re}(s)-n} e^{-\mathrm{arg}(t+ic(x))\mathrm{Im}(s)}.
\end{eqnarray}
For $t\in [x,\infty)$, we have
\[
0< t \leq |t+ ic(x)|\ \text{and}\ |\mathrm{arg}(t+ic(x))| \leq |\mathrm{arg}(x+ic(x))|
\]
and
\begin{equation} \label{axys}
\int^{\infty}_{x}\left| (t+ic(x))^{s-n}\right| dt \leq  e^{|\mathrm{arg}(x+ic(x))\mathrm{Im}(s)|}  \int^{\infty}_x t^{\mathrm{Re}(s)-n}dt.
\end{equation}
The right hand side of (\ref{axys}) is equal to
\begin{equation} \label{axys2}
e^{|\mathrm{arg}(x+ic(x))\mathrm{Im}(s)|} \left[ \frac{1}{\mathrm{Re}(s)-n+1}t^{\mathrm{Re}(s)-n+1}\right]^{\infty}_{x} = -\frac{ e^{|\mathrm{arg}(x+ic(x))\mathrm{Im}(s)|}} {\mathrm{Re}(s)-n+1} x^{\mathrm{Re}(s)-n+1}.
\end{equation}
On the other hand, we have
\begin{eqnarray*}
\lim_{x\to\infty} \frac{e^{-x}(x+ic(x))^{s-j}}{e^{-x}(x+ic(x))^{s}} &=& \lim_{x\to\infty} \frac{1} {(x+ic(x))^j} = 0,\ 1\leq j\leq n-1,
\end{eqnarray*}
and
\begin{eqnarray*}
\lim_{x\to\infty} \frac{\left|\int^{\infty}_{x} e^{-t}(t+ic(x))^{s-n}dt\right|}{|e^{-x}x^s|} = \lim_{x\to\infty} \frac{e^x\left|\int^{\infty}_{x} e^{-t}(t+ic(x))^{s-n}dt\right|} {x^{\mathrm{Re}(s)}} \leq \frac{ e^{|\pi\mathrm{Im}(s)|}} {\mathrm{Re}(s)-n+1} \lim_{x\to\infty} \frac{-1}{x^{n-1}} = 0.
\end{eqnarray*}
Here, we used (\ref{axys2}) and the fact that $n\geq 2$.
If we combine all these limits with (\ref{axys0}), then we have
\begin{eqnarray*}
\lim_{x\to\infty} \frac{A(x,y,s)}{e^{-x}(x+ic(x))^s} = 1.
\end{eqnarray*}

Now, we will prove the part (1) as $x\to-\infty$.
We may assume that $y\neq 0$ since the asymptotic properties of $A(x,0,s)$ comes directly from (\ref{incompletegamma}).
Note that the integral
\[
\int^{\infty}_{-1} e^{-t}(t+iy)^sdt
\]
is well defined if $y\neq0$.
Therefore, it suffices to show that
\begin{equation} \label{axysminus}
\lim_{x<-1 \atop x\to-\infty} \frac{\int^{-1}_x e^{-t}(t+iy)^sdt}{e^{-x}x^s} = 1.
\end{equation}
We will use the similar argument as above.
Suppose that $x<-1$.
Note that the integral
\[
\int^{-1}_x e^{-t}(t+iy)^sdt
\]
is equal to
\begin{eqnarray} \label{axysminus2}
&& e^{-x}(x+iy)^{s} + s e^{-x}(x+iy)^{s-1} + \cdots +
s(s-1)\cdots (s-(n-2))e^{-x}(x+iy)^{s-(n-1)}\\
\nonumber &&+ s(s-1)\cdots (s-(n-1))\int^{-1}_{x} e^{-t}(t+iy)^{s-n}dt\\
\nonumber &&-e(-1+iy)^s -se(-1+iy)^s - \cdots -s(s-1)\cdots (s-(n-1))e(-1+iy)^{s-(n-1)}.
\end{eqnarray}
We take $n$ such that $n\geq2$ and $\mathrm{Re}(s) - n + 1<0$.
The terms in the last line of (\ref{axysminus2}) are constant terms with respect to the $x$ variable. Therefore, they do not contribute in the computation of the limit (\ref{axysminus}) since $\lim_{x\to-\infty} |e^{-x}x^s| = \infty$.
One can also see that
\[
\lim_{x\to-\infty} \frac{e^{-x}(x+iy)^{s-j}}{e^{-x}(x+iy)^s} = 0,\ 1\leq j\leq n-1.
\]
Thus, it is enough to show that
\begin{equation} \label{axysminus3}
\lim_{x\to-\infty} \frac{\left|\int^{-1}_{x} e^{-t}(t+iy)^{s-n}dt\right|}{|e^{-x}(x+iy)^s|} = 0.
\end{equation}
For $t\in [x,-1]$, we have
\[
0< -t \leq |t+iy|\ \text{and}\ |\mathrm{arg}(t+iy)| \leq |\mathrm{arg}(x+iy)|.
\]
If we use (\ref{complexexponent}), then we obtain
\begin{equation} \label{axysminus4}
e^x \left|\int^{-1}_{x} e^{-t}(t+iy)^{s-n}dt\right| \leq \int^{-1}_x |(t+iy)^{s-n}|dt \leq e^{|\mathrm{arg}(x+iy)\mathrm{Im}(s)|}\int^{-1}_x (-t)^{\mathrm{Re}(s)-n}dt.
\end{equation}
The right hand side of (\ref{axysminus4}) is equal to
\[
\frac{e^{|\mathrm{arg}(x+iy)\mathrm{Im}(s)|}}{\mathrm{Re}(s)-n+1}((-x)^{\mathrm{Re}(s)-n+1}-1).
\]
Therefore, the equation (\ref{axysminus3}) holds since the fact that $n\geq2$ implies that
\[
\lim_{x\to-\infty} \frac{1}{x^{n-1}} = 0.
\]
This completes the proof.
\end{proof}

In the following lemma, we prove the uniform convergence of a certain series by using Lemma \ref{gammaasymptotic}.

\begin{lem} \label{uniform}
Let $E\subset \RR$ be bounded.
Suppose that $a(n)$ is a sequence with $a(n) = O(e^{C\sqrt{n}})$ for some $C>0$ as $n\to\infty$.
Then the series
\begin{equation} \label{Aseries}
\sum_{n=1}^{\infty} a(n) e^{nb(t)} A(n(b(t)+\alpha), nc(t), s)
\end{equation}
converges uniformly on $E$, where $b(t)$ and $c(t)$ are continuous real-valued functions on $E$, and $\alpha$ is a positive real number.
\end{lem}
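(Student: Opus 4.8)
The plan is to prove uniform convergence by the Weierstrass $M$-test: I will produce constants $M_n$, \emph{independent of} $t\in E$, satisfying $\bigl|a(n)e^{nb(t)}A(n(b(t)+\alpha),nc(t),s)\bigr|\le M_n$ together with $\sum_{n\ge 1}M_n<\infty$. This immediately gives uniform convergence of the series (\ref{Aseries}) on $E$, so everything reduces to a single uniform estimate of the general term.

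The crux is a \emph{uniform} version of Lemma \ref{gammaasymptotic}. Writing $x=n(b(t)+\alpha)$ and $y=nc(t)$, observe that whenever $b(t)+\alpha\neq 0$ one has $y=\frac{c(t)}{b(t)+\alpha}\,x$, a linear function of $x$ through the origin; hence for each fixed $t$ with $b(t)+\alpha>0$ the quantity $x$ tends to $+\infty$ as $n\to\infty$, and Lemma \ref{gammaasymptotic}(2) yields
\[
A\bigl(n(b(t)+\alpha),nc(t),s\bigr)\ \sim\ e^{-n(b(t)+\alpha)}\bigl(n(b(t)+\alpha)+inc(t)\bigr)^{s}\qquad(n\to\infty).
\]
To make this uniform in $t$, I would first note that $b$ and $c$, being continuous, are bounded on $E$ (we may take $E$ inside a compact interval on which they extend continuously, as in our application), so that $|(b(t)+\alpha)+ic(t)|$ is bounded and $\arg\bigl(n(b(t)+\alpha)+inc(t)\bigr)\in(-\pi,\pi]$. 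I would then rerun the integration-by-parts expansion from the proof of Lemma \ref{gammaasymptotic}: each of the finitely many explicit terms has the stated leading shape, and the remainder integral is controlled exactly as in (\ref{axys2}), whose only $t$-dependence enters through the argument factor (uniformly $\le \pi|\mathrm{Im}(s)|$) and through the base point $x$. Collecting these estimates produces a constant $K$ and an index $N_0$, both independent of $t$, with
\[
\bigl|A(n(b(t)+\alpha),nc(t),s)\bigr|\ \le\ K\,e^{-n(b(t)+\alpha)}\,n^{\mathrm{Re}(s)}\qquad(n\ge N_0,\ t\in E),
\]
where $K$ has absorbed the uniformly bounded factors $|(b(t)+\alpha)+ic(t)|^{\mathrm{Re}(s)}$ and $e^{-\arg(\cdots)\mathrm{Im}(s)}$.

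The decisive point is that multiplying by $e^{nb(t)}$ cancels the $t$-dependent exponential, since $e^{nb(t)}e^{-n(b(t)+\alpha)}=e^{-n\alpha}$ with $\alpha>0$ fixed. Thus for $n\ge N_0$ and all $t\in E$,
\[
\bigl|a(n)e^{nb(t)}A(n(b(t)+\alpha),nc(t),s)\bigr|\ \le\ K\,|a(n)|\,e^{-n\alpha}\,n^{\mathrm{Re}(s)}\ \le\ K'\,e^{C\sqrt{n}}\,e^{-n\alpha}\,n^{\mathrm{Re}(s)}=:M_n,
\]
using the hypothesis $a(n)=O(e^{C\sqrt{n}})$. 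Because $\alpha>0$, the factor $e^{-n\alpha}$ dominates $e^{C\sqrt{n}}n^{\mathrm{Re}(s)}$, so $\sum_n M_n<\infty$ and the $M$-test concludes.

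I expect the main obstacle to lie in the second paragraph, namely upgrading the pointwise asymptotic of Lemma \ref{gammaasymptotic} to a bound uniform in $t$. The slope $c(t)/(b(t)+\alpha)$ of the governing linear function varies with $t$, so one must verify that no constant in the integration-by-parts estimate degenerates as $t$ ranges over $E$; concretely, one must treat the sign of $b(t)+\alpha$ (the regime $x\to+\infty$ handled by Lemma \ref{gammaasymptotic}(2) versus the boundary and negative cases, which can be absorbed into the same $e^{-n\alpha}$-dominated bound) and ensure that $(b(t)+\alpha)+ic(t)$ stays bounded away from $0$ when $\mathrm{Re}(s)<0$. Once boundedness (and nondegeneracy) of $b$ and $c$ on $E$ is secured, all these constants are controlled simultaneously and the exponential gain $e^{-n\alpha}$ makes summability automatic.
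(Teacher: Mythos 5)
Your proposal is correct and follows essentially the same route as the paper's proof: both rest on the asymptotic $A(x,y,s)\sim e^{-x}(x+iy)^{s}$ from Lemma \ref{gammaasymptotic}, the key cancellation $e^{nb(t)}e^{-n(b(t)+\alpha)}=e^{-n\alpha}$, boundedness of $b$ and $c$ on $E$ for uniformity, and the growth bound $a(n)=O(e^{C\sqrt{n}})$, with the paper merely packaging the conclusion as a uniform tail (Cauchy) estimate---absorbing the polynomial factor $(x+iy)^{s}$ into $e^{n\beta}$ for an auxiliary $\beta>0$---instead of your Weierstrass $M$-test with $M_n=K'e^{C\sqrt{n}}e^{-n\alpha}n^{\mathrm{Re}(s)}$. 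Your extra care in upgrading the pointwise asymptotic to a bound uniform in $t$ (rerunning the integration by parts with tracked constants) and in flagging the sign of $b(t)+\alpha$ and possible degeneration of $(b(t)+\alpha)+ic(t)$ addresses exactly the steps the paper asserts without detail from the boundedness of $\{b(t)\}$ and $\{c(t)\}$, so it is a tightening of the same argument rather than a different one.
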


\begin{proof} [\bf Proof of Lemma \ref{uniform}]
Let $\beta$ be a positive real number.
Since $a(n) = O(e^{C\sqrt{n}})$ as $n\to\infty$, the series
\[
\sum_{n=1}^\infty |a(n)|e^{-n(\alpha+\beta)}
\]
converges.
Therefore, for given $\epsilon>0$, there exists $N_1\in\NN$ such that
\[
\sum_{m=n}^\infty |a(m)|e^{-m(\alpha+\beta)} < 2\epsilon
\]
for all $n\geq N_1$.
Note that by Lemma \ref{gammaasymptotic} we have
\[
\lim_{n\to\infty} \frac{A(n(b(t)+\alpha), nc(t), s)}{e^{n(b(t)+\alpha)}e^{n\beta}} = \lim_{n\to\infty} \frac{(n(b(t)+\alpha)+inc(t))^s}{e^{n\beta}} = 0.
\]
Since $E$ is bounded, the sets $\{ b(t)\ |\ t\in E\}$ and $\{ c(t)\ |\ t\in E\}$ are also bounded.
This means that there exists $N_2\in\NN$ such that
\[
\left| \frac{A(n(b(t)+\alpha), nc(t), s)}{e^{n(b(t)+\alpha)}e^{n\beta}}\right| < \frac12
\]
for all $n\geq N_2$ and all $t\in E$.
Therefore, for all $n\geq \mathrm{max}(N_1, N_2)$, we have
\[
\left| \sum_{m=n}^\infty a(m)e^{nb(t)}A(n(b(t)+\alpha), nc(t), s)\right| < \frac12 \sum_{m=n}^\infty |a(m)|e^{-m(\alpha+\beta)}< \epsilon.
\]
Hence, the series (\ref{Aseries}) converges uniformly.
\end{proof}

\subsection{Well-definedness of regularized Eichler integrals}\label{Well-definedness of regularized Eichler integrals}
The following lemma shows that  the regularized Eichler integral associated with $g$ defined in (\ref{regularEichler})
is well defined for  $z_1\in \HH$ and $z_2\in \CC$ if $s\neq1$ and $\mathrm{Im}(z_1) + \mathrm{Im}(z_2) > 0$.

\begin{lem} \label{generalpoint}
Let $z_1\in\HH$ and $z_2\in \CC$.
Suppose that $s\neq-1, \mathrm{Im}(z_1) + \mathrm{Im}(z_2) >0$ and
$g(z) = \sum_{n\geq n_0} b(n)e^{2\pi inz} \in M^!_{k}(\Gamma_0(4N))$ for $n_0\in\ZZ$.
Then, the regularized Eichler integral associated with $g$ 
 is well defined, and can be written as
\begin{eqnarray*}
\int^{\reg}_{[z_1,i\infty]} g(\tau)(\tau-\overline{z_2})^{s}d\tau &=&  \left(\frac i{2\pi}\right)^{s+1}\sum_{n\gg-\infty\atop n\neq0} \frac{b(n)}{n^{s+1}} e^{2\pi inx_1} e^{2\pi ny_2} A(2\pi n(y_1+y_2), 2\pi n(x_2-x_1), s)\\
&&-b(0)\frac{i^{s+1}(y_1+y_2+i(x_2-x_1))^{s+1}}{s+1},
\end{eqnarray*}
where $x_i$ (resp. $y_i$) denotes the real part (resp. imaginary part) of $z_i$ for $i\in \{1,2\}$.
\end{lem}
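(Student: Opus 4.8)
The plan is to substitute the Fourier expansion $g(\tau) = \sum_{n\geq n_0} b(n)e^{2\pi in\tau}$ directly into the defining integral $I_s(g,z_1,z_2,\alpha,\beta)$ of (\ref{inside}) and to evaluate the two iterated analytic continuations term by term. First I would fix $\re(\alpha)\ll 0$ and $\re(\beta)\gg 0$, a range in which the integral converges absolutely and the interchange of summation and integration is justified. Parametrizing the vertical ray by $\tau = x_1 + iv$ with $v\in[y_1,\infty)$ and writing $\tau-\overline{z_2} = i[(v+y_2)+i(x_2-x_1)]$, each term becomes elementary; after the substitutions $w = v+y_2$ and $t = 2\pi(n+\beta)w$ it collapses to the function $A$ of the previous subsection, producing for each $n\neq 0$ the term
\[
b(n)\left(\tfrac{i}{2\pi}\right)^{s+\alpha+1}\frac{e^{2\pi i(n+\beta)x_1}e^{2\pi(n+\beta)y_2}}{(n+\beta)^{s+\alpha+1}}\,A\bigl(2\pi(n+\beta)(y_1+y_2),\,2\pi(n+\beta)(x_2-x_1),\,s+\alpha\bigr),
\]
together with one exceptional contribution coming from $n=0$.

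Second, I would treat the $n=0$ term, which is the only source of the analytic continuation in $\beta$ and of the hypothesis $s\neq-1$. For $\re(\alpha)\ll 0$ the integral $\int_{y_1+y_2}^{\infty} e^{-2\pi\beta w}[w+i(x_2-x_1)]^{s+\alpha}\,dw$ already converges and is holomorphic at $\beta=0$, because $\re(s+\alpha+1)<0$ forces decay at the upper limit. Evaluating at $\beta=0$ and using $\int_{P}^{\infty}(w+iR)^{s+\alpha}\,dw = -(P+iR)^{s+\alpha+1}/(s+\alpha+1)$ with $P = y_1+y_2>0$ and $R = x_2-x_1$ yields $-b(0)\,i^{s+\alpha+1}\bigl((y_1+y_2)+i(x_2-x_1)\bigr)^{s+\alpha+1}/(s+\alpha+1)$. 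Continuing this in $\alpha$ to $\alpha=0$ is legitimate precisely when $s+1\neq 0$, and gives the stated boundary term. Here the hypothesis $\im(z_1)+\im(z_2)>0$ is exactly what guarantees $P>0$, keeping the base point away from the branch cut of the power.

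Third, I would dispose of the surviving $n\neq 0$ terms. Each is holomorphic in $\beta$ near $\beta=0$, since $n+\beta\neq 0$ there and $A$ depends holomorphically on its arguments (the value $y=0$ being supplied by the continuation of $\Gamma(s+1,\cdot)$); for the finitely many indices $n<0$ this holomorphy furnishes the continuation from the region $\re(\beta)\gg 0$ where the original integral converged. To move the two continuations inside the infinite sum, I would invoke Lemma \ref{uniform}: using the standard bound $b(n) = O(e^{C\sqrt{n}})$ for Fourier coefficients of weakly holomorphic modular forms, the series of $n\neq 0$ terms converges uniformly on a neighborhood of $(\alpha,\beta)=(0,0)$, so the termwise limits $\beta\to 0$ and then $\alpha\to 0$ commute with the summation. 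Setting $\beta=0$ and $\alpha=0$ in each term then reproduces exactly the sum in the statement.

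The main obstacle is the bookkeeping of the two iterated continuations together with the interchange of limit and summation: one must verify that the representation as a sum of $A$-functions holds throughout the overlapping domain $\re(\alpha)\ll 0$, $\re(\beta)\gg 0$, that it extends holomorphically past $\beta=0$ and then $\alpha=0$ term by term, and that the uniform convergence from Lemma \ref{uniform} (via the $O(e^{C\sqrt{n}})$ growth) legitimizes performing the continuations under the summation sign. The isolated $n=0$ term, where both the genuine $\beta$-continuation and the pole at $s=-1$ reside, is the piece requiring the most care.
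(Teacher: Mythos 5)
Your proposal is correct and follows essentially the same route as the paper: substitute the Fourier expansion into $I_s(g,z_1,z_2,\alpha,\beta)$ in the region $\re(\alpha)\ll 0$, $\re(\beta)\gg 0$, split off the $n=0$ term (which produces the boundary term and the hypothesis $s\neq -1$), reduce each $n\neq 0$ term to the function $A$ via the same parametrization of the vertical ray, and justify the iterated continuations at $\beta=0$ and then $\alpha=0$ using the asymptotics of $A$ together with the bound $b(n)=O(e^{C\sqrt{|n|}})$. The only cosmetic difference is that you cite Lemma \ref{uniform} for the uniform convergence where the paper invokes Lemma \ref{gammaasymptotic} directly, but the former is proved from the latter, so this is the same mechanism.
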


\begin{proof} [\bf Proof of Lemma \ref{generalpoint}]
By the Fourier expansion of $g$,  we have
\begin{eqnarray} \label{I1}
 I_s(g,z_1, z_2, \alpha,\beta) = \int^{i\infty}_{z_1} \sum_{n\gg-\infty}b(n)e^{2\pi in\tau}(\tau-\overline{z_2})^{s+\alpha}e^{2\pi i\beta\tau}d\tau.
\end{eqnarray}
Suppose that $n+\re(\beta)>0$ for all $n\geq n_0$ and $\re(\alpha)< -\re(s)-1$.
Then, the integral in the right hand side of (\ref{I1}) is well defined and
we can exchange the order of summation and integration in (\ref{I1}).
Thus, we have
\begin{eqnarray} \label{computeintegral}
 I_s(g,z_1, z_2, \alpha,\beta)  &=& \sum_{n\gg-\infty\atop n\neq0} b(n)\int^{i\infty}_{z_1} e^{2\pi in\tau}(\tau-\overline{z_2})^{s+\alpha}e^{2\pi i\beta\tau}d\tau\\
\nonumber && + b(0)\int^{i\infty}_{z_1}(\tau-\overline{z_2})^{s+\alpha}e^{2\pi i\beta\tau}d\tau.
\end{eqnarray}
We denote the first summand (resp. the second summand) in (\ref{computeintegral}) by
 $S_1(z_1,z_2,\alpha,\beta)$ (resp. $S_2(z_1,z_2,\alpha,\beta)$).

Let us note that $\re(\alpha)<-\re(s)-1$.
We have
\begin{eqnarray*}
S_2(z_1,z_2, \alpha,0) &=& b(0) \int^{i\infty}_{z_1} (\tau-\overline{z_2})^{s+\alpha}d\tau = b(0) \left[ \frac1{s+1+\alpha}(\tau-\overline{z_2})^{s+1+\alpha}\right]_{z_1}^{i\infty}\\
 &=& -b(0) \frac{(z_1-\overline{z_2})^{s+1+\alpha}}{s+1+\alpha} = -b(0) \frac{i^{s+1+\alpha}(y_1+y_2+i(x_2-x_1))^{s+1+\alpha}}{s+1+\alpha}.
\end{eqnarray*}
On, the other hand, by applying the change of variables $\tau = x_1 + i(t-y_2)$, we obtain
\begin{eqnarray*}
S_1(z_1,z_2,\alpha,\beta)  &=& i^{s+1+\alpha}\sum_{n\gg-\infty\atop n\neq0} b(n)e^{2\pi i(n+\beta)(x_1-iy_2)}\int^{\infty}_{y_1+y_2}e^{-2\pi (n+\beta)t}(t+i(x_2-x_1))^{s+\alpha}dt\\
&=&  \left(\frac i{2\pi}\right)^{s+1+\alpha}\sum_{n\gg-\infty\atop n\neq0} \frac{b(n)}{(n+\beta)^{s+\alpha+1}} e^{2\pi i(n+\beta)(x_1-iy_2)}\\
&&\times A(2\pi(n+\beta)(y_1+y_2),2\pi(n+\beta)(x_2-x_1),s+\alpha).
\end{eqnarray*}
Let us note that
\begin{equation} \label{coefficientgrowth}
b(n) = O\left(e^{C\sqrt{|n|}}\right),\ n\to\infty
\end{equation}
for a constant $C>0$.
If we use Lemma \ref{gammaasymptotic} with (\ref{coefficientgrowth}), then,
for a fixed $\alpha$ such that $\re(\alpha)<-\re(s)-1$, the function $I_s(f,z_1,z_2,\alpha,\beta)$ has an analytic continuation to $\beta=0$
as
\begin{align*}
\left[ I_{s}(f,z_1, z_2, \alpha,\beta)\right]_{\beta=0}=&\left(\frac i{2\pi}\right)^{s+1+\alpha}\sum_{n\gg-\infty\atop n\neq0} \frac{b(n)}{n^{s+\alpha+1}} e^{2\pi in(x_1-iy_2)} A(2\pi n(y_1+y_2),2\pi n(x_2-x_1),s+\alpha)\\
&\quad -b(0)\frac{i^{s+1+\alpha}(y_1+y_2+i(x_2-x_1))^{s+1+\alpha}}{s+1+\alpha}.
\end{align*}

Finally, if $s\neq-1$, then we have
\begin{align*}
\left[ \left[ I_{s}(f,z_1, z_2, \alpha,\beta)\right]_{\beta=0} \right]_{\alpha=0}=&\left(\frac i{2\pi}\right)^{s+1}\sum_{n\gg-\infty\atop n\neq0} \frac{b(n)}{n^{s+1}} e^{2\pi in(x_1-iy_2)}A(2\pi n(y_1+y_2),2\pi n(x_2-x_1),s)\\
&\ -b(0)\frac{i^{s+1}(y_1+y_2+i(x_2-x_1))^{s+1}}{s+1}.
\end{align*}
Thus, the regularized Eichler integral
\[
\int^{\reg}_{[z_1,i\infty]} g(\tau)(\tau-\overline{z_2})^{s}d\tau = \left[ \left[ I_{s}(g,z_1, z_2, \alpha,\beta)\right]_{\beta=0} \right]_{\alpha=0}
\]
 is well defined for $z_1\in\HH$, $z_2,s\in\CC$ with $y_1 + y_2 > 0$ and  $s\neq-1$.
\end{proof}

\begin{rmk} \label{nonholo}
\begin{enumerate}
\item If the integral $\int^{i\infty}_{z_1} g(\tau)(\tau-\overline{z_2})^{s}d\tau$ is well defined, then it is the same as the regularized Eichler integral of $g$ in (\ref{regularEichler}).

\item If $b(0) = 0$, then the regularized Eichler integral of $g$ in (\ref{regularEichler}) is the same as the analytic continuation of $I_s(g,z_1,z_2, 0,\beta)$ at $\beta = 0$, which is denoted by
$\left[ I_s(g,z_1,z_2, 0,\beta)\right]_{\beta=0}$.

\item In general, the regularized Eichler integral
$\int^{\reg}_{[z_1,i\infty]} g(\tau)(\tau-\overline{z_2})^s d\tau$
is not the same as the limit of integrals $\lim_{y\to\infty} \int_{z_1}^{z_1+iy} g(\tau)(\tau-\overline{z_2})^sd\tau$.
In fact, this limit does not converge if $g$ has a principal part at the cusp $i\infty$.
However, if $g$ is a cusp form, then the limit converges and is equal to $\int^{\reg}_{[z_1,i\infty]} g(\tau)(\tau-\overline{z_2})^s d\tau$.

\item
Although we have defined $M^!_{k}(\Gamma_0(4N))$ with the multiplier system $\chi_{\theta}^{2k}$, the statement of Lemma \ref{generalpoint} is valid for all multiplier system $\chi$ on $\Gamma_0(4N)$ of weight $k$ with $\chi(T)=1$.

\item
If we set $z_1 = z_2 = z\in\HH$, then we obtain
\begin{eqnarray*}
\overline{\int^{\reg}_{[z,i\infty]} g(\tau)(\tau-\overline{z})^{k-2}d\tau} &=& \left(\frac{-i}{2\pi}\right)^{k-1}\sum_{n\gg-\infty\atop n\neq0} \overline{\left(\frac{b(n)}{n^{k-1}}\right)}e^{2\pi i(-n)x}B_{2-k}\left(-2\pi ny\right)\\
&&-\frac{\overline{b(0)}(-2i)^{k-1}}{k-1}y^{k-1}.
\end{eqnarray*}
By the definition of $B_k(x)$, we have
\[
e^{2\pi i(-n)x}B_{2-k}\left(-2\pi ny\right) = e^{2\pi i(-n)z}\int^{\infty}_{4\pi ny}e^{-t}t^{k-2}dt
\]
for $n\neq0$.
Since $\frac{\partial}{\partial \bar{z}} = \frac12\left(\frac{\partial}{\partial x} + i\frac{\partial}{\partial y}\right)$, we obtain
\[
\xi_{2-k}\left(\overline{\int^{\reg}_{[z,i\infty]} g(\tau)(\tau-\overline{z})^{k-2}d\tau}\right) = -(2i)^{k-1}g(z).
\]
Note that the Fourier expansion of $\overline{\int^{\reg}_{[z,i\infty]} g(\tau)(\tau-\overline{z})^{k-2}d\tau}$  is of the same form with that  of the nonholomorphic part of a harmonic weak Maass form of weight $2-k$ (see (\ref{minuspart})). 
In other words, we have
\begin{equation} \label{nonholomorphicpart}
f^-(z) = \frac{-1}{(-2i)^{k-1}} \overline{\int^{\reg}_{[z,i\infty]} \xi_{2-k}(f)(\tau)(\tau-\overline{z})^{k-2}d\tau}
\end{equation}
for $f\in H_{2-k}(\Gamma_0(4N))$.
\end{enumerate}
\end{rmk}

The following lemma shows that the regularized Eichler integral defined in (\ref{othercusp}) is well defined on certain conditions.

\begin{lem} \label{welldefinedforcusp}
\begin{enumerate}
\item 
The regularized Eichler integral
$
\int^{\reg}_{[z_1,r]} g(\tau)(\tau-\overline{z_2})^{k-2}d\tau
$
is well defined for $z_1\in \HH$ and $z_2\in \CC$ with $z_2\neq r$ and $\mathrm{Im}(\gamma^{-1}z_1) + \mathrm{Im}(\gamma^{-1}z_2) > 0$,
and equal to
\[
(-c\overline{z_2}+a)^{k-2}\int^{\reg}_{[\gamma^{-1}z_1,i\infty]} (c\tau+d)^{-k}g(\gamma\tau)(\tau-\overline{\gamma^{-1}z_2})^{k-2}d\tau,
\]
where $\gamma = \sm a&b\\c&d\esm \in \SL_2(\ZZ)$ satisfies $r = \gamma(i\infty)$.
In particular, if $\gamma\in \Gamma_0(4N)$ and $z_2\in\HH$, then the regularized Eichler integral is equal to
\[
\chi^{2k}_{\theta}(\gamma)(-c\overline{z_2}+a)^{k-2}\int^{\reg}_{[\gamma^{-1}z_1,i\infty]} g(\tau)(\tau-\overline{\gamma^{-1}z_2})^{k-2}d\tau.
\]

\item If $z_2 = r$ and $\re(s)>k-1$, then the regularized Eichler integral (\ref{othercusp})
 is well defined for $z_1\in \HH$ such that $\re(z_1) = r$. Furthermore, it has a meromorphic continuation to all $s\in\CC$, which has a pole at $s = k-1$.
\end{enumerate}
\end{lem}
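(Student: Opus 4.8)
The plan is to handle the two parts by different means: part~(1) reduces to Lemma~\ref{generalpoint} through a Möbius change of variables, while part~(2), where that reduction degenerates, must be analyzed directly from the Fourier expansion at the cusp $r$. For part~(1), the key input is the elementary identity
\[
\gamma\tau-\overline{z_2}=\frac{(-c\overline{z_2}+a)(\tau-\overline{\gamma^{-1}z_2})}{c\tau+d},
\]
valid for $\gamma=\sm a&b\\c&d\esm$ with $r=\gamma(i\infty)$, which follows from $\gamma^{-1}=\sm d&-b\\-c&a\esm$ and $ad-bc=1$ upon clearing denominators; note that $-c\overline{z_2}+a\neq0$ precisely because $z_2\neq r$. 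First I would substitute this into the integrand of the defining expression (\ref{othercusp}) with $s=k-2$. The power of $c\tau+d$ contributed by $(c\tau+d)^{-2+\alpha}$ combines with the factor $(c\tau+d)^{-(k-2+\alpha)}$ produced by the identity to give exactly $(c\tau+d)^{-k}$, so the integrand collapses to
\[
(-c\overline{z_2}+a)^{k-2+\alpha}\,\big[(c\tau+d)^{-k}g(\gamma\tau)\big]\,(\tau-\overline{\gamma^{-1}z_2})^{k-2+\alpha}e^{2\pi i\beta\tau}.
\]
Writing $G(\tau):=(c\tau+d)^{-k}g(\gamma\tau)$, whose Fourier expansion is of the form (\ref{fourierq}), this is the integrand of $I_{k-2}(G,\gamma^{-1}z_1,\gamma^{-1}z_2,\alpha,\beta)$ up to the holomorphic scalar $(-c\overline{z_2}+a)^{k-2+\alpha}$. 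Pulling this scalar out, carrying out the continuation in $\beta$ and then in $\alpha$, and using $(-c\overline{z_2}+a)^{k-2+\alpha}\to(-c\overline{z_2}+a)^{k-2}$ as $\alpha\to0$, both the well-definedness and the stated formula follow from Lemma~\ref{generalpoint} applied to $G$, whose hypothesis $\mathrm{Im}(\gamma^{-1}z_1)+\mathrm{Im}(\gamma^{-1}z_2)>0$ is exactly the assumption made. The proof of Lemma~\ref{generalpoint} goes through verbatim for the shifted period $\lambda_\gamma$ in (\ref{fourierq}), since the bound $b_\gamma(n)=O(e^{C\sqrt{n}})$, Lemma~\ref{gammaasymptotic}, and Lemma~\ref{uniform} are insensitive to it. The ``in particular'' case then follows by inserting $(c\tau+d)^{-k}g(\gamma\tau)=\chi_\theta^{2k}(\gamma)g(\tau)$ for $\gamma\in\Gamma_0(4N)$.

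For part~(2) the scalar $-c\overline{z_2}+a$ vanishes (as $z_2=r$), so I would instead use the sharper identity $\gamma\tau-r=-1/(c(c\tau+d))$, again a consequence of $ad-bc=1$. Substituting into (\ref{othercusp}), the powers of $c\tau+d$ combine to $(c\tau+d)^{-2-s}$, with the $\alpha$'s cancelling, and the surviving $\alpha$-dependence sits only in a scalar $(-1/c)^{s+\alpha}$ that tends to $(-1/c)^{s}$; thus the $\alpha$-regularization becomes trivial and the content lies entirely in the $\beta$-continuation. Since $\mathrm{Re}(z_1)=r$ forces $\gamma^{-1}z_1=-d/c+i\sigma_0$ with $\sigma_0=1/(c^2\,\mathrm{Im}(z_1))>0$, the contour is the vertical ray $\mathrm{Re}(\tau)=-d/c$, on which $c\tau+d=ic\sigma$ is purely imaginary; the substitution $\tau=-d/c+i\sigma$ then turns $(c\tau+d)^{-2-s}g(\gamma\tau)=(c\tau+d)^{k-2-s}G(\tau)$ into a constant multiple of $\sigma^{k-2-s}G(-d/c+i\sigma)$. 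Expanding $G$ by (\ref{fourierq}) and interchanging sum and integral (legitimate for $\mathrm{Re}(\beta)\gg0$ by absolute convergence, exactly as in Lemma~\ref{generalpoint}) writes the integral as a series of incomplete gamma integrals $\int_{\sigma_0}^{\infty}\sigma^{k-2-s}e^{-2\pi[(n+\kappa_\gamma)/\lambda_\gamma+\beta]\sigma}\,d\sigma$.

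The final step is the continuation to $\beta=0$ and the analysis in $s$. For each index with $n+\kappa_\gamma\neq0$ the corresponding term continues to an entire function of $s$ (using the $A(x,0,s)$-continuation for the finitely many principal-part terms), and the full series converges locally uniformly in $s$ by the growth bound on $b_\gamma(n)$ together with Lemmas~\ref{gammaasymptotic} and~\ref{uniform}. The sole exceptional contribution is the constant term, occurring only when $\kappa_\gamma=0$, where $\int_{\sigma_0}^{\infty}\sigma^{k-2-s}\,d\sigma=\sigma_0^{k-1-s}/(s-(k-1))$ converges precisely for $\mathrm{Re}(s)>k-1$ and provides the meromorphic continuation with a simple pole at $s=k-1$. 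I expect this last step to be the main obstacle: one must isolate the constant Fourier coefficient, keep careful track of the branch of $(c\tau+d)^{k-2-s}$ along the vertical ray, and check that no other term contributes a singularity, so that $s=k-1$ is indeed the only pole.
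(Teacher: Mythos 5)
Your proposal is correct and matches the paper's argument in all essentials: part (1) is proved there by the same cocycle identity $(\gamma\tau-\overline{z_2})^{k-2+\alpha}=(\tau-\overline{\gamma^{-1}z_2})^{k-2+\alpha}(c\tau+d)^{-(k-2+\alpha)}(-c\overline{z_2}+a)^{k-2+\alpha}$ followed by an appeal to Lemma \ref{generalpoint}, and part (2) by reducing to incomplete gamma integrals along the ray above $-d/c$ (the paper reaches your vertical-ray parametrization via the two substitutions $w=\gamma\tau-r$ and $z=-1/w$, which is only a cosmetic difference), with the $\kappa_\gamma=0$ constant term producing $\int^{i\infty}_{i/t}z^{k-s-2}dz=-i^{k-s-1}t^{s-k+1}/(k-s-1)$ and hence the pole at $s=k-1$. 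Your explicit remarks that the $\alpha$-regularization degenerates to a scalar, that $z_2\neq r$ guarantees $-c\overline{z_2}+a\neq0$, and that Lemma \ref{generalpoint} extends verbatim to expansions with $\kappa_\gamma,\lambda_\gamma$ are all consistent with (and in places more careful than) the paper's write-up.
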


\begin{proof} [\bf Proof of Lemma \ref{welldefinedforcusp}]
(1)
For any $\tau\in \HH$ and $z\in\HH$, we obtain
\begin{equation} \label{determinerho}
(\gamma\tau - \overline{\gamma z})^{k-2+\alpha} =
 (\tau-\bar{z})^{k-2+\alpha} (c\tau+d)^{-(k-2+\alpha)} \left(\frac{1}{c\bar{z}+d}\right)^{k-2+\alpha}.
\end{equation}

Therefore, we have
\begin{eqnarray*}
&&\int^{\reg}_{[z_1,r]} g(\tau)(\tau-\overline{z_2})^{k-2}d\tau = \left[ \left[ \int^{i\infty}_{\gamma^{-1}(z_1)} (c\tau+d)^{-2+\alpha}g(\gamma\tau)(\gamma\tau-\overline{z_2})^{k-2+\alpha}e^{2\pi i\beta\tau}d\tau \right]_{\beta=0} \right]_{\alpha=0}\\
&&=  \left[ \left[ \int^{i\infty}_{\gamma^{-1}(z_1)} \left(\frac{1}{c\overline{\gamma^{-1}z_2}+d}\right)^{k-2+\alpha}
(c\tau+d)^{-k}g(\gamma\tau)(\tau-\overline{\gamma^{-1}z_2})^{k-2+\alpha}e^{2\pi i\beta\tau}d\tau \right]_{\beta=0} \right]_{\alpha=0}\\
&&= (-c\overline{z_2}+a)^{k-2}\left[ \left[ \int^{i\infty}_{\gamma^{-1}(z_1)}
(c\tau+d)^{-k}g(\gamma\tau)(\tau-\overline{\gamma^{-1}z_2})^{k-2+\alpha}e^{2\pi i\beta\tau}d\tau \right]_{\beta=0} \right]_{\alpha=0}\\
&&= (-c\overline{z_2}+a)^{k-2}\int^{\reg}_{[\gamma^{-1}z_1,i\infty]} (c\tau+d)^{-k}g(\gamma\tau)(\tau-\overline{\gamma^{-1}z_2})^{k-2}d\tau.
\end{eqnarray*}
By Lemma \ref{generalpoint}, this integral is well defined for $z_1\in \HH$ and $z_2\in \CC$
if $z_2 \neq r$ and $\mathrm{Im}(\gamma^{-1}z_1) + \mathrm{Im}(\gamma^{-1}z_2) > 0$.

(2) By the definition of the regularized Eichler integral in (\ref{othercusp}), we have
\[
\int^{\reg}_{[z_1,r]} g(\tau)(\tau-r)^{s}d\tau = \left[ \left[ \int^{i\infty}_{\gamma^{-1}(z_1)} (c\tau+d)^{-2+\alpha}g(\gamma\tau)(\gamma\tau-r)^{s+\alpha}e^{2\pi i\beta\tau}d\tau \right]_{\beta=0} \right]_{\alpha=0},
\]
where $\gamma = \sm a&b\\c&d\esm \in \SL_2(\ZZ)$ satisfying $r = \gamma(i\infty)$.
Let $z_1 = r+it$ for $t>0$.
By applying the change of variables $w = \gamma\tau -r$, we obtain
\begin{eqnarray*}
&&\int^{i\infty}_{\gamma^{-1}(z_1)} (c\tau+d)^{-2+\alpha}g(\gamma\tau)(\gamma\tau-r)^{s+\alpha}e^{2\pi i\beta\tau}d\tau\\
&&= \int^{0}_{z_1-r} (-cw)^{2-\alpha}g(w+r)w^{s+\alpha}e^{2\pi i\beta\gamma^{-1}(w+r)}(-cw)^{-2}dw\\
&&= c^{-\alpha}e^{\pi i\alpha} \int_{it}^0 g(w+r)w^{s}e^{2\pi i\beta\gamma^{-1}(w+r)}dw.
\end{eqnarray*}
Then, by the change of variables $z = -\frac 1w$, we have
\begin{eqnarray*}
&&\int_{it}^0 g(w+r)w^{s}e^{2\pi i\beta\gamma^{-1}(w+r)}dw= \int_{i/t}^{i\infty} g\left(-\frac1z+r\right)\left(-\frac1z\right)^{s}e^{2\pi i\beta\gamma^{-1}(-\frac1z+r)}\frac{dz}{z^2}.
\end{eqnarray*}
Note that
\[
c\left(\frac z{c^2}-\frac dc\right)+d = \frac zc
\]
and
\[
\gamma\left( \frac z{c^2}-\frac dc\right) =
\frac{a\left(\frac z{c^2}-\frac dc\right) + b}{c\left(\frac z{c^2}-\frac dc\right)+d} = -\frac 1z + \frac ac = -\frac 1z + r.
\]
Therefore, we obtain
\begin{eqnarray} \label{computation}
\nonumber && \int_{i/t}^{i\infty} g\left(-\frac1z+r\right)\left(-\frac1z\right)^{s}e^{2\pi i\beta\gamma^{-1}(-\frac1z+r)}\frac{dz}{z^2}\\
&&=  \int_{i/t}^{i\infty} \left(c\left(\frac z{c^2}-\frac dc\right)+d\right)^{-k}
g\left(\gamma\left( \frac z{c^2}-\frac dc\right)\right)
\biggl(\frac zc\biggr)^k \left(-\frac1z\right)^{s}e^{2\pi i\beta(\frac z{c^2}-\frac dc)}\frac{dz}{z^2}.
\end{eqnarray}
If we employ the Fourier expansion of $(cz+d)^{-k}g(\gamma z)$ in (\ref{fourierq}),
then (\ref{computation}) is equal to
\begin{eqnarray*}
\frac{1}{c^k} i^{2s} \sum_{n\gg-\infty} b_\gamma(n)e^{2\pi i\left( \frac{n+\kappa_\gamma}{\lambda_\gamma} + \beta \right) \left(-\frac dc\right)} \int^{i\infty}_{i/t} e^{2\pi i\left( \frac{n+\kappa_\gamma}{\lambda_\gamma} + \beta \right) \left(\frac{z}{c^2}\right)} z^{k-s-2}dz.
\end{eqnarray*}
If $n+\kappa_\gamma\neq 0$, then, by the change of variables $y = z/i$, we obtain
\begin{eqnarray*}
&&\int^{i\infty}_{i/t} e^{2\pi i\left( \frac{n+\kappa_\gamma}{\lambda_\gamma} + \beta \right) \left(\frac{z}{c^2}\right)} z^{k-s-2}dz = i^{k-s-1} \int^{\infty}_{1/t} e^{-2\pi \left( \frac{n+\kappa_\gamma}{\lambda_\gamma} + \beta \right) \left(\frac{y}{c^2}\right)} y^{k-s-2}dy\\
&&= \Gamma\left(k-s-1, \frac{2\pi}{c^2t}\left(\frac{n+\kappa_\gamma}{\lambda_\gamma}+\beta\right)\right)\frac{ i^{k-s-1}}{\left(\frac{2\pi}{c^2} \left(\frac{n+\kappa_\gamma}{\lambda_\gamma}+\beta\right)\right)^{k-s-1}}.
\end{eqnarray*}
By the same argument as in the proof of Lemma \ref{generalpoint}, we obtain
\begin{eqnarray*}
&&\left[ \int^{i\infty}_{\gamma^{-1}(z_1)} (c\tau+d)^{-2+\alpha}g(\gamma\tau)(\gamma\tau-r)^{s+\alpha}e^{2\pi i\beta\tau}d\tau \right]_{\beta=0}\\
&&= c^{-\alpha} \frac{1}{c^k} (c^2)^{k-s-1} e^{\pi i\alpha}
i^{k+s-1} \sum_{n\gg-\infty\atop n+\kappa
_\gamma\neq 0} \frac{b_\gamma(n)e^{2\pi i\left( \frac{n+\kappa_\gamma}{\lambda_\gamma}\right) \left(-\frac dc\right)}}{\left(2\pi \left(\frac{n+\kappa_\gamma}{\lambda_\gamma}\right)\right)^{k-s-1}}
 \Gamma\left(k-s-1, \frac{2\pi}{c^2t}\left(\frac{n+\kappa_\gamma}{\lambda_\gamma}\right)\right)
 \\
 && +  c^{-\alpha} \frac{1}{c^k} e^{\pi i\alpha} i^{2s} \delta_{\kappa_\gamma,0}b_\gamma(0) \int^{i\infty}_{i/t} z^{k-s-2}dz.
\end{eqnarray*}
The last integral can be computed as
\[
\int^{i\infty}_{i/t} z^{k-s-2}dz = -\frac{i^{k-s-1}t^{s-k+1}}{(k-s-1)}.
\]
Therefore,  we have
\begin{eqnarray} \label{computation2}
\nonumber &&\int^{\reg}_{[z_1,r]} g(\tau)(\tau-r)^{s}d\tau\\
 &&= \frac{1}{c^k} (c^2)^{k-s-1} i^{k+s-1} \sum_{n\gg-\infty\atop n+\kappa
_\gamma\neq 0} \frac{b_\gamma(n)e^{2\pi i\left( \frac{n+\kappa_\gamma}{\lambda_\gamma}\right) \left(-\frac dc\right)}}{\left(2\pi \left(\frac{n+\kappa_\gamma}{\lambda_\gamma}\right)\right)^{k-s-1}}
 \Gamma\left(k-s-1, \frac{2\pi}{c^2t}\left(\frac{n+\kappa_\gamma}{\lambda_\gamma}\right)\right)
 \\
\nonumber && - \delta_{\kappa_\gamma,0}   \frac{ i^{k+s-1}t^{s-k+1} b_\gamma(0) }{c^k(k-s-1)}.
\end{eqnarray}
As in the proof of Lemma \ref{generalpoint}, we can see that the series in (\ref{computation2}) converges absolutely for any $s\in\CC$ except $s = k-1$ by the asymptotic properties of the incomplete gamma function and Fourier coefficients of $g$, i.e.,
\[
b_\gamma(n) = O\left( e^{C\sqrt{|n|}}\right),\ n\to\infty
\]
for a constant $C>0$.
Hence, this series gives the desired meromorphic continuation.
\end{proof}

The following lemma shows that the regularized integral defined in (\ref{twocusp}) is independent of the choice of $z_1$.

\begin{lem} \label{independent}
Let $z_0, z_1\in\HH$. Then, we have
\begin{eqnarray*}
&&-\int^{\reg}_{[z_1,r_1]} g(\tau)(\tau-\overline{z_2})^{s}d\tau + \int^{\reg}_{[z_1,r_2]} g(\tau)(\tau-\overline{z_2})^{s}d\tau\\
&&= -\int^{\reg}_{[z_0,r_1]} g(\tau)(\tau-\overline{z_2})^{s}d\tau + \int^{\reg}_{[z_0,r_2]} g(\tau)(\tau-\overline{z_2})^{s}d\tau,
\end{eqnarray*}
if each integral is well defined.
\end{lem}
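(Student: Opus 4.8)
The plan is to reduce the identity to a single \emph{difference formula}: for any cusp $r$ and any two points $z_0,z_1\in\HH$,
\begin{equation*}
\int^{\reg}_{[z_1,r]} g(\tau)(\tau-\overline{z_2})^{s}d\tau - \int^{\reg}_{[z_0,r]} g(\tau)(\tau-\overline{z_2})^{s}d\tau = \int_{z_1}^{z_0} g(w)(w-\overline{z_2})^{s}dw,
\end{equation*}
where the right-hand side is an ordinary, absolutely convergent integral along a path in $\HH$ from $z_1$ to $z_0$. Granting this, the point is that the right-hand side does not involve $r$ at all: applying the formula with $r=r_1$ and with $r=r_2$ gives two equal expressions, and subtracting them, $\left(\int^{\reg}_{[z_1,r_2]}-\int^{\reg}_{[z_0,r_2]}\right)-\left(\int^{\reg}_{[z_1,r_1]}-\int^{\reg}_{[z_0,r_1]}\right)=0$, which rearranges to exactly the asserted equality. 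So the whole content of the lemma is the difference formula, and I would spend the proof establishing it.

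To prove the difference formula I would unwind the definition (\ref{othercusp}). Fix $\gamma=\sm a&b\\c&d\esm\in\SL_2(\ZZ)$ with $r=\gamma(i\infty)$, so that both regularized integrals are built from the \emph{same} $\gamma$. For $\mathrm{Re}(\alpha)\ll 0$ and $\mathrm{Re}(\beta)\gg 0$ the two defining integrals converge absolutely, and, since $\gamma^{-1}(z_0),\gamma^{-1}(z_1)\in\HH$, their difference telescopes to an integral over the \emph{compact} path from $\gamma^{-1}(z_1)$ to $\gamma^{-1}(z_0)$:
\begin{equation*}
J(\alpha,\beta) := \int_{\gamma^{-1}(z_1)}^{\gamma^{-1}(z_0)} (c\tau+d)^{-2+\alpha} g(\gamma\tau)(\gamma\tau-\overline{z_2})^{s+\alpha} e^{2\pi i\beta\tau}\,d\tau.
\end{equation*}
The crucial observation is that $J(\alpha,\beta)$ is \emph{entire} in $(\alpha,\beta)\in\CC^2$: the integrand is jointly continuous and holomorphic in $(\alpha,\beta)$ along the compact path, where $g(\gamma\tau)$ is holomorphic, $c\tau+d\neq 0$, and a single branch of $(\gamma\tau-\overline{z_2})^{s+\alpha}$ is available because $\gamma\tau\in\HH$ stays off the branch locus. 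In particular the constant ($n=0$) term, which is what forces the regularization in $\alpha$ for each individual integral, survives in the difference only as a convergent compact-path integral, so no genuine regularization remains. Hence the iterated analytic continuation of $J$ to $\beta=0$ and then $\alpha=0$ simply returns its value $J(0,0)$.

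It remains to combine this with linearity of analytic continuation. Writing $A_r(\alpha,\beta)$ and $B_r(\alpha,\beta)$ for the two defining integrals, we have $A_r-B_r=J$ on the open connected region of convergence; since each of $A_r,B_r$ is assumed to admit the required continuation and $J$ is entire, the continuation of $A_r-B_r$ equals the difference of the continuations by uniqueness. Therefore
\begin{equation*}
\int^{\reg}_{[z_1,r]} - \int^{\reg}_{[z_0,r]} = J(0,0) = \int_{\gamma^{-1}(z_1)}^{\gamma^{-1}(z_0)} (c\tau+d)^{-2} g(\gamma\tau)(\gamma\tau-\overline{z_2})^{s}\,d\tau,
\end{equation*}
and the substitution $w=\gamma\tau$, for which $(c\tau+d)^{-2}\,d\tau=dw$ since $\det\gamma=1$, turns this into $\int_{z_1}^{z_0} g(w)(w-\overline{z_2})^{s}\,dw$, manifestly independent of $\gamma$ and hence of $r$. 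This is the difference formula, and the lemma follows as explained above.

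I expect the main obstacle to be the bookkeeping of the analytic continuations rather than any analytic estimate: one must justify forming the difference of the two raw integrals \emph{before} continuing in $\alpha$ and $\beta$, that is, that the continuation of the difference agrees with the difference of the continuations. This is precisely where the hypothesis ``if each integral is well defined'' enters, together with the entireness of the telescoped integrand $J$, which reduces everything to the elementary calculus of integrals over compact paths in $\HH$.
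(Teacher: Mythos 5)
Your proof is correct and follows essentially the same route as the paper: both unwind the definition (\ref{othercusp}) with a fixed $\gamma$ for each cusp, telescope the pair of defining integrals into an integral over a compact path in $\HH$ where the integrand is holomorphic in $\tau$ and entire in $(\alpha,\beta)$ (so the iterated continuation just evaluates at $\alpha=\beta=0$), and conclude via the substitution $w=\gamma\tau$. The only difference is cosmetic packaging: you prove an $r$-independent difference formula $\int^{\reg}_{[z_1,r]}-\int^{\reg}_{[z_0,r]}=\int_{z_1}^{z_0} g(w)(w-\overline{z_2})^{s}\,dw$ and apply it twice, while the paper combines all four terms at once and shows the resulting two compact-path integrals cancel as $\int_{z_0}^{z_1}+\int_{z_1}^{z_0}=0$.
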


\begin{proof} [\bf Proof of Lemma \ref{independent}]
Let us compute the following sum of the integrals
\begin{eqnarray} \label{original}
&&-\int^{\reg}_{[z_1,r_1]} g(\tau)(\tau-\overline{z_2})^{s}d\tau + \int^{\reg}_{[z_1,r_2]} g(\tau)(\tau-\overline{z_2})^{s}d\tau\\
\nonumber &&+\int^{\reg}_{[z_0,r_1]} g(\tau)(\tau-\overline{z_2})^{s}d\tau - \int^{\reg}_{[z_0,r_2]} g(\tau)(\tau-\overline{z_2})^{s}d\tau.
\end{eqnarray}
From the definition of the regularized Eichler integral in (\ref{othercusp}), we see that (\ref{original}) is equal to
\begin{eqnarray*}
&&-\left[ \left[ \int^{i\infty}_{\gamma_1^{-1}(z_1)} (c_1\tau+d_1)^{-2+\alpha}g(\gamma_1\tau)(\gamma_1\tau-\overline{z_2})^{s+\alpha}e^{2\pi i\beta\tau}d\tau \right]_{\beta=0} \right]_{\alpha=0}\\
&&+\left[ \left[ \int^{i\infty}_{\gamma_1^{-1}(z_0)} (c_1\tau+d_1)^{-2+\alpha}g(\gamma_1\tau)(\gamma\tau-\overline{z_2})^{s+\alpha}e^{2\pi i\beta\tau}d\tau \right]_{\beta=0} \right]_{\alpha=0}\\
&&+\left[ \left[ \int^{i\infty}_{\gamma_2^{-1}(z_1)} (c_2\tau+d_2)^{-2+\alpha}g(\gamma_2\tau)(\gamma_2\tau-\overline{z_2})^{s+\alpha}e^{2\pi i\beta\tau}d\tau \right]_{\beta=0} \right]_{\alpha=0}\\
&&-\left[ \left[ \int^{i\infty}_{\gamma_2^{-1}(z_0)} (c_2\tau+d_2)^{-2+\alpha}g(\gamma_2\tau)(\gamma_2\tau-\overline{z_2})^{s+\alpha}e^{2\pi i\beta\tau}d\tau \right]_{\beta=0} \right]_{\alpha=0}\\
&&=\left[ \left[ \int^{\gamma_1^{-1}(z_1)}_{\gamma_1^{-1}(z_0)} (c_1\tau+d_1)^{-2+\alpha}g(\gamma_1\tau)(\gamma_1\tau-\overline{z_2})^{s+\alpha}e^{2\pi i\beta\tau}d\tau \right]_{\beta=0} \right]_{\alpha=0}\\
&&+\left[ \left[ \int^{\gamma_2^{-1}(z_0)}_{\gamma_2^{-1}(z_1)} (c_2\tau+d_2)^{-2+\alpha}g(\gamma_2\tau)(\gamma_2\tau-\overline{z_2})^{s+\alpha}e^{2\pi i\beta\tau}d\tau \right]_{\beta=0} \right]_{\alpha=0},
\end{eqnarray*}
where $\gamma_i = \sm a_i&b_i\\c_i&d_i\esm\in \SL_2(\ZZ)$ satisfies $r_i = \gamma_i(i\infty)$ for $i=1, 2$.
Since
\[
(c_i\tau+d_i)^{-2}g(\gamma_i\tau)(\gamma_i\tau-\overline{z_2})^{s}
\]
is holomorphic on $\HH$ as a function of $\tau$ for $i=1,2$, the last sum is equal to
\begin{eqnarray} \label{integralcompt}
&&\int^{\gamma_1^{-1}(z_1)}_{\gamma_1^{-1}(z_0)} (c_1\tau+d_1)^{-2}g(\gamma_1\tau)(\gamma_1\tau-\overline{z_2})^{s}d\tau +\int^{\gamma_2^{-1}(z_0)}_{\gamma_2^{-1}(z_1)} (c_2\tau+d_2)^{-2}g(\gamma_2\tau)(\gamma_2\tau-\overline{z_2})^{s}d\tau.
\end{eqnarray}
By applying the change of variables $w = \gamma_i\tau$ for $i=1,2$,  the sum in (\ref{integralcompt}) takes the form
\begin{eqnarray*}
\int^{z_1}_{z_0} g(w)(w-\overline{z_2})^{s}dw + \int^{z_0}_{z_1} g(w)(w-\overline{z_2})^{s}dw = 0.
\end{eqnarray*}
This completes the proof.
\end{proof}

\subsection{Properties of regularized Eichler integrals}
The following lemma shows that the regularized Eichler integral (\ref{twocusp}) satisfies certain transformation properties.
\begin{lem} \label{transform}
Let $g\in M^!_{k}(\Gamma_0(4N))$ and $r_1, r_2\in \QQ\cup \{i\infty\}$.
Suppose that $\gamma = \sm a&b\\c&d\esm\in \Gamma_0(4N)$.
Then, we have
\[
\int^{\reg}_{[r_1, r_2]} g(\tau)(\tau-\overline{z})^{k-2}d\tau
= \chi^{2k}_{\theta}(\gamma) (-c\overline{z}+a)^{k-2}\int^{\reg}_{[\gamma^{-1}(r_1), \gamma^{-1}(r_2)]} g(\tau)(\tau-\overline{\gamma^{-1}z})^{k-2}d\tau
\]
for $z\in\HH$
\end{lem}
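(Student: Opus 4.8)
The plan is to reduce the two-cusp identity to a single-cusp transformation law and then recombine. By the definition (\ref{twocusp}) of the two-cusp regularized integral, together with the independence of the basepoint proved in Lemma \ref{independent}, it suffices to establish that for every cusp $r\in\QQ\cup\{i\infty\}$ and every $z_1\in\HH$ one has
\begin{equation} \label{single}
\int^{\reg}_{[z_1,r]} g(\tau)(\tau-\overline{z})^{k-2}d\tau = \chi^{2k}_{\theta}(\gamma)(-c\overline{z}+a)^{k-2}\int^{\reg}_{[\gamma^{-1}z_1,\gamma^{-1}(r)]} g(\tau)(\tau-\overline{\gamma^{-1}z})^{k-2}d\tau.
\end{equation}
Indeed, applying (\ref{single}) to $r=r_1$ and $r=r_2$, inserting the results into (\ref{twocusp}) with basepoint $z_1$ on the left and $\gamma^{-1}z_1$ on the right (both legitimate by Lemma \ref{independent}), and reading the resulting bracket back through (\ref{twocusp}) for the pair $\gamma^{-1}(r_1),\gamma^{-1}(r_2)$ yields the assertion of the lemma.

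To prove (\ref{single}) I would unwind the definition (\ref{othercusp}). Fix $\eta\in\SL_2(\ZZ)$ with $r=\eta(i\infty)$ and put $\mu:=\gamma^{-1}\eta$, so that $\mu$ represents $\gamma^{-1}(r)$ and $\mu^{-1}\gamma^{-1}=\eta^{-1}$; consequently both regularized integrals in (\ref{single}) are built, via (\ref{othercusp}), from ordinary integrals along the common path $[\eta^{-1}z_1,i\infty]$. The key step is to compare the two integrands before the continuations in $\beta$ and $\alpha$ are taken. Writing $\eta\tau=\gamma(\mu\tau)$ and $z=\gamma(\gamma^{-1}z)$, I would combine the modularity $g|_k\gamma=g$, which gives $g(\eta\tau)=\chi_{\theta}(\gamma)^{2k}(c_\gamma\mu\tau+d_\gamma)^{k}g(\mu\tau)$; the cocycle relation $c_\eta\tau+d_\eta=(c_\gamma\mu\tau+d_\gamma)(c_\mu\tau+d_\mu)$; and the identity (\ref{determinerho}) applied with $\tau\mapsto\mu\tau$ and $z\mapsto\gamma^{-1}z$, which (using $1/(c\overline{\gamma^{-1}z}+d)=-c\overline{z}+a$) gives
\[
(\eta\tau-\overline{z})^{k-2+\alpha} = (\mu\tau-\overline{\gamma^{-1}z})^{k-2+\alpha}\left(\frac{c_\mu\tau+d_\mu}{c_\eta\tau+d_\eta}\right)^{k-2+\alpha}(-c\overline{z}+a)^{k-2+\alpha}.
\]
Substituting these into the integrand $(c_\eta\tau+d_\eta)^{-2+\alpha}g(\eta\tau)(\eta\tau-\overline{z})^{k-2+\alpha}e^{2\pi i\beta\tau}$ of the left-hand regularized integral, the powers of $c_\eta\tau+d_\eta$ cancel to the zeroth power while those of $c_\mu\tau+d_\mu$ collect to $-2+\alpha$; thus the integrand equals $\chi_{\theta}(\gamma)^{2k}(-c\overline{z}+a)^{k-2+\alpha}$ times the integrand $(c_\mu\tau+d_\mu)^{-2+\alpha}g(\mu\tau)(\mu\tau-\overline{\gamma^{-1}z})^{k-2+\alpha}e^{2\pi i\beta\tau}$ attached to the right-hand integral of (\ref{single}).

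Since the scalar $\chi_{\theta}(\gamma)^{2k}(-c\overline{z}+a)^{k-2+\alpha}$ is independent of $\tau$ and of $\beta$ and is holomorphic in $\alpha$, it survives both analytic continuations unchanged and specializes at $\alpha=0$ to $\chi_{\theta}(\gamma)^{2k}(-c\overline{z}+a)^{k-2}$, which proves (\ref{single}) and hence the lemma. The one genuinely delicate point I expect is the branch bookkeeping for the half-integer powers: the factor $(c_\gamma\mu\tau+d_\gamma)^{k}$ produced by $g|_k\gamma=g$ carries the $\chi_{\theta}$-branch, whereas the exponents $-2+\alpha$ and those in (\ref{determinerho}) are taken in the principal branch fixed by the convention $z=|z|e^{i\mathrm{arg}(z)}$, so one must check that these combine consistently to the stated cancellations rather than differing by a root of unity. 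This is exactly the compatibility already used to produce the factor $\chi^{2k}_{\theta}(\gamma)$ in Lemma \ref{welldefinedforcusp}(1), and I would verify it with the same branch conventions; once that is in hand, the computation above is routine.
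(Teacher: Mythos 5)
Your argument is correct, and it reaches the lemma by a different organization than the paper's. The paper decomposes both sides via (\ref{twocusp}) with basepoints $z_1$ and $z_2=\gamma^{-1}z_1$, applies Lemma \ref{welldefinedforcusp}(1) to each of the four resulting single-cusp integrals so that everything is transported to the cusp $i\infty$ (with representatives $\gamma_i$ for $r_i$ on the left and $\gamma^{-1}\gamma_i$ on the right), and then matches the scalar prefactors using the consistency condition of the multiplier system $\chi^{2k}_{\theta}$ together with the identity $j(\gamma,\gamma^{-1}z)=j(\gamma^{-1},z)^{-1}$. You instead prove once and for all the $\Gamma_0(4N)$-equivariance of the single-cusp integral $\int^{\reg}_{[z_1,r]}$ with \emph{arbitrary} target cusp --- a statement the paper never isolates, though it specializes at $\gamma^{-1}(r)=i\infty$ to the ``in particular'' part of Lemma \ref{welldefinedforcusp}(1) --- and you prove it directly at the level of the $(\alpha,\beta)$-regularized integrand: choosing $\eta$ with $r=\eta(i\infty)$ and $\mu=\gamma^{-1}\eta$ makes both defining integrals run over the common path from $\eta^{-1}z_1$, and modularity of $g$, the cocycle relation for $j$, and (\ref{determinerho}) identify the integrands up to the $\tau$- and $\beta$-independent scalar $\chi^{2k}_{\theta}(\gamma)(-c\overline{z}+a)^{k-2+\alpha}$, which survives both continuations and specializes correctly at $\alpha=0$; your exponent bookkeeping (the powers of $j(\eta,\tau)$ cancelling, those of $j(\mu,\tau)$ collecting to $-2+\alpha$, and $1/(c\overline{\gamma^{-1}z}+d)=-c\overline{z}+a$) checks out, and the reduction to the two-cusp statement via (\ref{twocusp}) and Lemma \ref{independent} is formal. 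One incidental advantage of your route: the paper's intermediate display (\ref{transform1}) attaches factors $\chi^{2k}_{\theta}(\gamma_i)$ to matrices $\gamma_i\in\SL_2(\ZZ)$ that need not lie in $\Gamma_0(4N)$, where $\chi_{\theta}$ is not literally defined (these spurious factors cancel at the end, and the general part of Lemma \ref{welldefinedforcusp}(1) is phrased with $(c\tau+d)^{-k}g(\gamma\tau)$ kept inside precisely to avoid them), whereas your proof only ever evaluates $\chi_{\theta}$ on $\Gamma_0(4N)$. The branch caveat you flag for half-integral exponents is genuine, but it sits at exactly the level of implicitness the paper itself accepts in (\ref{determinerho}) and in the consistency condition --- that condition is precisely the statement that the half-integral branch discrepancies are absorbed by the multiplier --- so your proof meets the paper's own standard of rigor. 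What the paper's route buys is reuse of Lemma \ref{welldefinedforcusp}(1) as a black box with no fresh integrand computation; what yours buys is a single, cleaner equivariance identity and one cocycle computation in place of the prefactor juggling.
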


\begin{proof}  [\bf Proof of Lemma \ref{transform}]
By the definition of the regularized Eichler integral, we have
\begin{equation*}
\int^{\reg}_{[r_1,r_2]} g(\tau)(\tau-\overline{z})^{k-2}d\tau
= -\int^{\reg}_{[z_1,r_1]} g(\tau)(\tau-\overline{z})^{k-2}d\tau
+\int^{\reg}_{[z_1,r_2]} g(\tau)(\tau-\overline{z})^{k-2}d\tau
\end{equation*}
for $z_1\in \HH$.
By Lemma \ref{welldefinedforcusp}, we obtain
\begin{eqnarray} \label{transform1}
\int^{\reg}_{[r_1,r_2]} g(\tau)(\tau-\overline{z})^{k-2}d\tau&=&-\chi^{2k}_{\theta}(\gamma_1)j(\gamma_1^{-1}, \overline{z})^{k-2}\int^{\reg}_{[\gamma_1^{-1}z_1,i\infty]} g(\tau)(\tau-\overline{\gamma_1^{-1}z})^{k-2}d\tau\\
\nonumber &&+ \chi^{2k}_{\theta}(\gamma_2)j(\gamma_2^{-1}, \overline{z})^{k-2}\int^{\reg}_{[\gamma_2^{-1}z_1,i\infty]} g(\tau)(\tau-\overline{\gamma_2^{-1}z})^{k-2}d\tau,
\end{eqnarray}
where $\gamma_i \in \SL_2(\ZZ)$ such that $\gamma_i(i\infty) = r_i$ for $i=1,2$. Here, $j(\gamma, z) = cz+d$ for $\sm a&b\\c&d\esm\in \SL_2(\ZZ)$.
If we use (\ref{transform1}), we have
\begin{eqnarray} \label{transform2}
&&\chi^{2k}_{\theta}(\gamma) j(\gamma^{-1},\overline{z})^{k-2}\int^{\reg}_{[\gamma^{-1}(r_1), \gamma^{-1}(r_2)]} g(\tau)(\tau-\overline{\gamma^{-1}z})^{k-2}d\tau\\
\nonumber &&= -\chi^{2k}_{\theta}(\gamma) j(\gamma^{-1},\overline{z})^{k-2} \chi^{2k}_{\theta}(\gamma^{-1}\gamma_1)j(\gamma_1^{-1}\gamma, \overline{\gamma^{-1} z})^{k-2}\int^{\reg}_{[\gamma_1^{-1}\gamma z_2,i\infty]} g(\tau)(\tau-\overline{\gamma_1^{-1}z})^{k-2}d\tau\\
\nonumber &&+ \chi^{2k}_{\theta}(\gamma) j(\gamma^{-1},\overline{z})^{k-2} \chi^{2k}_{\theta}(\gamma^{-1}\gamma_2)j(\gamma_2^{-1}\gamma, \overline{\gamma^{-1} z})^{k-2}\int^{\reg}_{[\gamma_2^{-1}\gamma z_2,i\infty]} g(\tau)(\tau-\overline{\gamma_2^{-1}z})^{k-2}d\tau
\end{eqnarray}
for $z_2\in\HH$

Note that by the consistency condition of the multiplier system $\chi_{\theta}^{2k}$ we have
\[
\chi^{2k}_{\theta}(\gamma_i^{-1}\gamma) j(\gamma_i^{-1}\gamma, w)^{k-2} = \chi^{2k}_{\theta}(\gamma_i^{-1}) j(\gamma_i^{-1}, \gamma w)^{k-2} \chi^{2k}_{\theta}(\gamma) j(\gamma,w)^{k-2}
\]
for $w\in \HH$ and $i = 1, 2$. If we take $w = \gamma^{-1}z$, then we obtain
\[
\chi^{2k}_{\theta}(\gamma_i^{-1}\gamma) j(\gamma_i^{-1}\gamma, \gamma^{-1}z)^{k-2} = \chi^{2k}_{\theta}(\gamma_i^{-1}) j(\gamma_i^{-1}, z)^{k-2} \chi^{2k}_{\theta}(\gamma) j(\gamma,\gamma^{-1}z)^{k-2}
\]
for $i=1,2$. Since $j(\gamma, \gamma^{-1}z)^{k-2} = j(\gamma^{-1}, z)^{-(k-2)}$, we have
\[
\chi^{-2k}_{\theta}(\gamma) j(\gamma^{-1},z)^{k-2} \chi^{-2k}_{\theta}(\gamma^{-1}\gamma_i) j(\gamma_i^{-1}\gamma, \gamma^{-1}z)^{k-2} = \chi^{-2k}_{\theta}(\gamma_i) j(\gamma_i^{-1}, z)^{k-2}
\]
for $i = 1, 2$.
This implies that (\ref{transform2}) is equal to
\begin{eqnarray} \label{transform3}
&&-\chi^{2k}_{\theta}(\gamma_1) j(\gamma_1^{-1}, \overline{z})^{k-2} \int^{\reg}_{[\gamma_1^{-1}\gamma z_2, i\infty]} g(\tau)(\tau-\overline{\gamma_1^{-1}z})^{k-2}d\tau\\
\nonumber &&+\chi^{2k}_{\theta}(\gamma_2) j(\gamma_2^{-1}, \overline{z})^{k-2}\int^{\reg}_{[\gamma_2^{-1}\gamma z_2, i\infty]} g(\tau)(\tau-\overline{\gamma_2^{-1}z})^{k-2}d\tau.
\end{eqnarray}
If we take $z_2 = \gamma^{-1}z_1$, then by (\ref{transform1})  we see that (\ref{transform3}) is equal to
$
\int^{\reg}_{[r_1,r_2]} g(\tau)(\tau-\overline{z})^{k-2}d\tau.
$
\end{proof}

The following lemma shows that we can do some change of variables in the regularized Eichler integrals as we do in the usual integrals.

\begin{lem} \label{Mtimes}
Let $f\in M^!_{k}(\Gamma_0(4N))$ and $g(z) := f(M^2z)$ for $M\in\NN$.
Let $\gamma = \sm a&b\\c&d\esm\in \Gamma_0(4NM^2)$.
Then, we have
\[
\int^{\reg}_{[\gamma(i\infty), i\infty]} g(\tau)(\tau-\overline{z})^{k-2}d\tau = \frac{1}{M^{2k-2}} \int^{\reg}_{[\gamma'(i\infty), i\infty]} f(\tau)(\tau- \overline{M^2z})^{k-2}d\tau
\]
for $z\in \HH$,
where $\gamma' = \sm a& M^2b\\ c/M^2 & d\esm\in \Gamma_0(4N)$.
\end{lem}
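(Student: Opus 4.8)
The plan is to prove the identity by the change of variables $\tau\mapsto u=M^2\tau$, which on $\SL_2(\RR)$ is conjugation by the scaling matrix $\sigma_M=\sm M&0\\0&1/M\esm$ acting on $\HH$ by $\sigma_M w=M^2w$. The key algebraic observation is that $\gamma'=\sigma_M\gamma\sigma_M^{-1}$: since $c\equiv0\ (\m\ 4NM^2)$ we have $c/M^2\in\ZZ$, one checks $\det\gamma'=ad-bc=1$ and $c/M^2\equiv0\ (\m\ 4N)$, so indeed $\gamma'\in\Gamma_0(4N)$, and the conjugation identity follows by a direct multiplication $\sigma_M\gamma\sigma_M^{-1}=\sm a&M^2b\\c/M^2&d\esm$. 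In particular $\gamma'(i\infty)=\sigma_M\gamma(i\infty)=M^2\gamma(i\infty)$, so the substitution sends the cusp $\gamma(i\infty)$ to $\gamma'(i\infty)$ and fixes $i\infty$; this is precisely what makes the two endpoint sets correspond.

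First I would reduce both sides to single-cusp integrals. By (\ref{twocusp}) with any base point $z_1\in\HH$,
\[
\int^{\reg}_{[\gamma(i\infty),i\infty]} g(\tau)(\tau-\overline z)^{k-2}d\tau = -\int^{\reg}_{[z_1,\gamma(i\infty)]} g(\tau)(\tau-\overline z)^{k-2}d\tau + \int^{\reg}_{[z_1,i\infty]} g(\tau)(\tau-\overline z)^{k-2}d\tau,
\]
and I would unfold each term via the definition (\ref{othercusp}), choosing $\delta=\gamma$ for the first and $\delta=I$ for the second. Inside the defining integral $I_{k-2}$, \emph{before} taking the continuations at $\beta=0$ and $\alpha=0$, I would substitute $\tau=u/M^2$. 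Using $g=f\circ\sigma_M$ one gets $g(\tau)=f(u)$ and, in the first term, $g(\gamma\tau)=f(\sigma_M\gamma\tau)=f(\gamma'u)$, together with $c\tau+d=(c/M^2)u+d$, the endpoint $\gamma^{-1}z_1\mapsto(\gamma')^{-1}(M^2z_1)$, and $(\gamma\tau-\overline z)^{k-2+\alpha}=M^{-2(k-2+\alpha)}(\gamma'u-\overline{M^2z})^{k-2+\alpha}$. Collecting the factors $d\tau=du/M^2$ and $(\tau-\overline z)^{k-2+\alpha}=M^{-2(k-2+\alpha)}(u-\overline{M^2z})^{k-2+\alpha}$, each integrand acquires the overall constant $M^{-2(k-2+\alpha)-2}$ while the regularizing exponential becomes $e^{2\pi i(\beta/M^2)u}$; the transformed integral is then exactly the defining integral for the corresponding regularized Eichler integral of $f$ based at $M^2z_1$, with parameters $(\alpha,\beta/M^2)$ and slashing matrix $\gamma'$.

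The one genuine point to justify is that the substitution commutes with the $(\alpha,\beta)$-regularization. The reparametrization $\beta\mapsto\beta/M^2$ is a biholomorphism of the $\beta$-plane fixing the origin, so by uniqueness of analytic continuation the value $[\,\cdot\,]_{\beta=0}$ is unchanged; likewise the $\alpha$-dependent part $M^{-2\alpha}$ of the constant is entire and equals $1$ at $\alpha=0$, so the subsequent continuation at $\alpha=0$ is unaffected and the constant specializes to $M^{-2(k-2)-2}=M^{-(2k-2)}$. After both continuations this gives
\[
\int^{\reg}_{[z_1,\gamma(i\infty)]} g(\tau)(\tau-\overline z)^{k-2}d\tau = \frac1{M^{2k-2}}\int^{\reg}_{[M^2z_1,\gamma'(i\infty)]} f(u)(u-\overline{M^2z})^{k-2}du,
\]
together with the analogous identity obtained by replacing $\gamma(i\infty),\gamma'(i\infty)$ with $i\infty$. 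Reassembling through (\ref{twocusp}) at the base point $M^2z_1\in\HH$ then yields the claimed formula, with well-definedness of each piece guaranteed by Lemma \ref{welldefinedforcusp} and independence of the base point by Lemma \ref{independent}. The main obstacle is exactly this bookkeeping of the regularization under rescaling; the geometric substitution itself is routine once the conjugation identity $\gamma'=\sigma_M\gamma\sigma_M^{-1}$ is in place.
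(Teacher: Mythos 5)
Your proof is correct, and it takes a somewhat different route than the paper's on the one nontrivial term. The paper proves the scaling identity only for base-point-to-$i\infty$ integrals $\int^{\reg}_{[z_1,i\infty]}$ (by the same substitution $u=M^2\tau$ inside the $(\alpha,\beta)$-regularization), and then handles the cusp term $\int^{\reg}_{[z_1,\gamma(i\infty)]}$ indirectly: it converts it to an $i\infty$-based integral via the transformation law of Lemma \ref{welldefinedforcusp} applied to $g$ on $\Gamma_0(4NM^2)$, applies the scaling identity there, and folds back using Lemma \ref{welldefinedforcusp} for $f$ and $\gamma'$; this forces it to invoke, and assert without verification, the conjugation identities $M^2(\gamma^{-1}z)=\gamma'^{-1}(M^2z)$, $\chi_{\theta}(\gamma)=\chi_{\theta}(\gamma')$, and $j(\gamma^{-1},z)=j(\gamma'^{-1},M^2z)$. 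You instead substitute $\tau=u/M^2$ directly in the defining integral (\ref{othercusp}) for the cusp term, whereupon the slash data for $\gamma$ transforms verbatim into the slash data for $\gamma'=\sigma_M\gamma\sigma_M^{-1}$; since (\ref{othercusp}) is defined for an arbitrary $\SL_2(\ZZ)$-matrix representing the cusp, you never need the transformation law, nor the multiplier identity $\chi_{\theta}(\gamma)=\chi_{\theta}(\gamma')$ (a fact the paper uses but does not check; it requires a small computation with $\chi_{\theta}(\gamma)=\varepsilon_d^{-1}\left(\frac cd\right)$ and $\gcd(M,d)=1$). Your explicit justification that the substitution commutes with the regularization --- $\beta\mapsto\beta/M^2$ is a biholomorphism fixing the origin, so $[\cdot]_{\beta=0}$ is unchanged, and $M^{-2\alpha}$ is entire with value $1$ at $\alpha=0$ --- is precisely the point the paper's displayed computation glosses over (it writes the factor $M^{-(2k-2)}$ outside the brackets while $\alpha$ is still live), so you are in fact more careful there. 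The only microscopic omission is the branch check in $(\gamma\tau-\overline z)^{k-2+\alpha}=M^{-2(k-2+\alpha)}(\gamma'u-\overline{M^2z})^{k-2+\alpha}$, which is immediate because the scaling factor $M^{-2}$ is a positive real and the convention $-\pi<\mathrm{arg}(z)\leq\pi$ is preserved under multiplication by positive reals.
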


\begin{proof} [\bf Proof of Lemma \ref{Mtimes}]
Let $z_1\in \HH$ be fixed.
Note that we have
\begin{eqnarray*}
\int^{\reg}_{[z_1, i\infty]} f(M^2\tau)(\tau-\overline{z})^{k-2}d\tau
&=&\left[ \left[ \int^{i\infty}_{z_1} f(M^2\tau)(\tau-\overline{z})^{k-2+\alpha} e^{2\pi i\beta\tau}d\tau \right]_{\beta=0} \right]_{\alpha=0}\\
&=&\frac{1}{M^2} \left[ \left[ \int^{i\infty}_{M^2z_1} f(\tau)\left(\frac{\tau}{M^2}-\overline{z}\right)^{k-2+\alpha} e^{2\pi i\beta\tau/M^2}d\tau \right]_{\beta=0} \right]_{\alpha=0}\\
&=&\frac{1}{M^{2k-2}} \left[ \left[ \int^{i\infty}_{M^2z_1} f(\tau)\left(\tau-\overline{M^2z}\right)^{k-2+\alpha} e^{2\pi i\beta\tau/M^2}d\tau \right]_{\beta=0} \right]_{\alpha=0}\\
&=&\frac{1}{M^{2k-2}} \int^{\reg}_{[M^2 z_1, i\infty]} f(\tau)(\tau-\overline{M^2z})^{k-2}d\tau.
\end{eqnarray*}
Therefore, by Lemma \ref{welldefinedforcusp}, we obtain
\begin{eqnarray} \label{Mtimes2}
&&\int^{\reg}_{[\gamma(i\infty), i\infty]} g(\tau)(\tau-\overline{z})^{k-2}d\tau\\
\nonumber &&= -\int^{\reg}_{[z_1,\gamma(i\infty)]} g(\tau)(\tau-\overline{z})^{k-2}d\tau + \int^{\reg}_{[z_1,i\infty]} g(\tau)(\tau-\overline{z})^{k-2}d\tau\\
\nonumber &&= -\chi^{2k}_{\theta}(\gamma) j(\gamma^{-1}, \overline{z})^{k-2}\int^{\reg}_{[\gamma^{-1}z_1, i\infty]} g(\tau)(\tau-\overline{\gamma^{-1}z})^{k-2}d\tau + \int^{\reg}_{[z_1, i\infty]} g(\tau)(\tau-\overline{z})^{k-2}d\tau\\
\nonumber && = -\chi^{2k}_{\theta}(\gamma)j(\gamma^{-1}, \overline{z})^{k-2}\frac{1}{M^{2k-2}} \int^{\reg}_{[M^2(\gamma^{-1}z_1), i\infty]} f(\tau)(\tau-\overline{M^2(\gamma^{-1}z)})^{k-2}d\tau\\
\nonumber && + \frac{1}{M^{2k-2}} \int^{\reg}_{[M^2z_1, i\infty]} f(\tau)(\tau-\overline{M^2z})^{k-2}d\tau.
\end{eqnarray}
Note that we have
\begin{eqnarray*}
M^2(\gamma^{-1}z) &=& \gamma'^{-1}(M^2 z), \chi_{\theta}(\gamma) = \chi_{\theta}(\gamma'),\ \text{\and}\ j(\gamma^{-1}, z) = j(\gamma'^{-1}, M^2 z)
\end{eqnarray*}
for $z\in\HH$.
Therefore, (\ref{Mtimes2}) is equal to
\begin{eqnarray*}
&&-\frac{1}{M^{2k-2}} \chi^{2k}_{\theta}(\gamma')j(\gamma'^{-1}, \overline{M^2z})^{k-2}\int^{\reg}_{[\gamma'^{-1}(M^2 z_1), i\infty]} f(\tau) (\tau-\overline{\gamma'^{-1}(M^2 z)})^{k-2}d\tau\\
&& + \frac{1}{M^{2k-2}} \int^{\reg}_{[M^2z_1, i\infty]} f(\tau)(\tau-\overline{M^2z})^{k-2}d\tau\\
&& = \frac{1}{M^{2k-2}} \int^{\reg}_{[\gamma'(i\infty), i\infty]} f(\tau)(\tau-\overline{M^2z})^{k-2}d\tau.
\end{eqnarray*}
\end{proof}

The formula (\ref{nonholomorphicpart}) applies to compute the period function of the nonholomorphic part of $f$ in terms of regularized Eichler integrals associated with $g$.

\begin{lem} \label{periodfunction}
Let $f\in H_{2-k}(\Gamma_0(4N))$ and $g = \xi_{2-k}(f) \in M^!_{k}(\Gamma_0(4N))$.
Then, we have
\[
(f^- - f^-|_{2-k}\gamma)(z) = -\frac{1}{(-2i)^{k-1}} \overline{\int^{\reg}_{[\gamma^{-1}
(i\infty),i\infty]} g(\tau)(\tau-\bar{z})^{k-2}d\tau}
\]
for $\gamma\in \Gamma_0(4N)$ and $z\in \HH$.
\end{lem}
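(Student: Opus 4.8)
The plan is to reduce everything to the explicit formula (\ref{nonholomorphicpart}) for the nonholomorphic part and then track how the slash operator interacts with the regularized Eichler integral. First I would apply (\ref{nonholomorphicpart}) at the two points $z$ and $\gamma z$ to write
\[
f^-(z) = -\frac{1}{(-2i)^{k-1}}\overline{\int^{\reg}_{[z,i\infty]} g(\tau)(\tau-\bar z)^{k-2}d\tau}, \qquad f^-(\gamma z) = -\frac{1}{(-2i)^{k-1}}\overline{\int^{\reg}_{[\gamma z,i\infty]} g(\tau)(\tau-\overline{\gamma z})^{k-2}d\tau}.
\]
Substituting the second identity into $(f^-|_{2-k}\gamma)(z) = \overline{\chi_\theta(\gamma)}^{2(2-k)}(cz+d)^{-(2-k)}f^-(\gamma z)$ expresses the slashed form as an automorphy factor times the conjugate of a regularized Eichler integral based at $\gamma z$. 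The goal is then to show that the difference $f^- - f^-|_{2-k}\gamma$ collapses into the single two-cusp integral $\int^{\reg}_{[\gamma^{-1}(i\infty),i\infty]}$ from the definition (\ref{twocusp}).

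The central step is to move the base point of the integral from $\gamma z$ back to $z$. For this I would invoke Lemma \ref{welldefinedforcusp}(1) with the cusp $r = \gamma^{-1}(i\infty)$, so that the matrix appearing in that lemma is $\gamma^{-1}\in\Gamma_0(4N)$; taking $z_1=z_2=z$ this rewrites $\int^{\reg}_{[z,\gamma^{-1}(i\infty)]} g(\tau)(\tau-\bar z)^{k-2}d\tau$ as $\chi_\theta^{2k}(\gamma^{-1})(c\bar z+d)^{k-2}\int^{\reg}_{[\gamma z,i\infty]} g(\tau)(\tau-\overline{\gamma z})^{k-2}d\tau$. Solving for the integral based at $\gamma z$ and inserting it into the expression for $(f^-|_{2-k}\gamma)(z)$, the factor $(cz+d)^{-(2-k)}$ from the slash operator cancels the conjugate of $(c\bar z+d)^{k-2}$, and the residual multiplier factor is trivial, so that $(f^-|_{2-k}\gamma)(z) = -\frac{1}{(-2i)^{k-1}}\overline{\int^{\reg}_{[z,\gamma^{-1}(i\infty)]} g(\tau)(\tau-\bar z)^{k-2}d\tau}$. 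Subtracting this from the expression for $f^-(z)$ and invoking the definition (\ref{twocusp}) of the two-cusp integral with base point $z_1 = z$, which is legitimate by the base-point independence of Lemma \ref{independent}, yields exactly the claimed formula.

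I expect the only genuine obstacle to be the bookkeeping of the automorphy factors under complex conjugation in half-integral weight. One must check with the correct branches that $(cz+d)^{-(2-k)}$ and $\overline{(c\bar z+d)^{k-2}}$ combine to $1$, and that the leftover multiplier $\overline{\chi_\theta(\gamma)}^{2(2-k)}\cdot\overline{\chi_\theta^{-2k}(\gamma^{-1})}$ is trivial. The latter follows from the unitarity relation $\chi_\theta(\gamma^{-1}) = \overline{\chi_\theta(\gamma)}$ together with the fact that $\chi_\theta^{4}$ is the trivial multiplier of the weight-two form $\theta^4$ on $\Gamma_0(4)\supseteq\Gamma_0(4N)$, so that $\chi_\theta(\gamma)^{4} = 1$: the surplus power of $\chi_\theta(\gamma)$ created by the mismatch between the weights $2-k$ and $k$ is precisely a fourth power and hence vanishes. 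Once these elementary identities are verified, the remainder of the argument is a direct substitution.
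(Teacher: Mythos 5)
Your proposal is correct and is essentially the paper's own argument: you apply (\ref{nonholomorphicpart}) at $z$ and $\gamma z$, use Lemma \ref{welldefinedforcusp}(1) with the matrix $\gamma^{-1}$ to relate $\int^{\reg}_{[\gamma z,i\infty]}$ to $\int^{\reg}_{[z,\gamma^{-1}(i\infty)]}$ (this is exactly the paper's identity (\ref{slashpart})), and then conclude from the definition (\ref{twocusp}) with base point $z$. Your explicit check that the surplus multiplier is a fourth power, hence trivial since $\chi_{\theta}$ takes values in $\{\pm1,\pm i\}$, is precisely what the paper uses implicitly when it writes $(f^-|_{2-k}\gamma)(z) = \chi_{\theta}^{2k}(\gamma)(cz+d)^{k-2}f^-(\gamma z)$.
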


\begin{proof} [\bf Proof of Lemma \ref{periodfunction}]
Let $\gamma = \sm a&b\\c&d\esm\in \Gamma_0(4N)$. We may assume that $c>0$ since we have $f^-|_{2-k}\gamma = f^-|_{2-k}
(-\gamma)$.
By (\ref{nonholomorphicpart}), we obtain
\begin{eqnarray*}
(f^-|_{2-k}\gamma)(z) &=& \chi_{\theta}^{2k}(\gamma) (cz+d)^{k-2}f^-(\gamma z)= \frac{-\chi_{\theta}^{2k}(\gamma)(cz+d)^{k-2}}{(-2i)^{k-1}}\overline{\int^{\reg}_{[\gamma z,i\infty]} g(\tau)(\tau-\overline{\gamma z})^{k-2}d\tau}.
\end{eqnarray*}
By Lemma \ref{welldefinedforcusp}, we have
\begin{equation} \label{slashpart}
\chi^{-2k}_{\theta}(\gamma)(c\overline{z}+d)^{k-2}\int^{\reg}_{[\gamma z, i\infty]} g(z)(\tau-\overline{\gamma z})^{k-2}d\tau = \int^{\reg}_{[z, \gamma^{-1}
(i\infty)]} g(\tau)(\tau-\overline{z})^{k-2}d\tau.
\end{equation}
Therefore, we obtain
\[
(f^-|_{2-k}\gamma)(z) = -\frac{1}{(-2i)^{k-1}} \overline{\int^{\reg}_{[z,\gamma^{-1}
(i\infty)]} g(\tau)(\tau-\overline{z})^{k-2}d\tau}.\]
If we combine (\ref{nonholomorphicpart}) and (\ref{slashpart}),  we have
\begin{eqnarray*}
(f^- - f^-|_{2-k}\gamma)(z)  &=& -\frac{1}{(-2i)^{k-1}}\left[ \overline{\int^{\reg}_{[z,i\infty]} g(\tau)(\tau-\bar{z})^{k-2}d\tau} - \overline{\int^{\reg}_{[z,\gamma^{-1}
(i\infty)]} g(\tau)(\tau-\overline{z})^{k-2}d\tau}\right]\\
&=&-\frac{1}{(-2i)^{k-1}} \overline{\int^{\reg}_{[\gamma^{-1}
(i\infty),i\infty]} g(\tau)(\tau-\bar{z})^{k-2}d\tau},
\end{eqnarray*}
which completes the proof.
\end{proof}

\begin{rmk}
The proof of Theorem \ref{maintheoremsection3} is given by Lemma \ref{generalpoint}, \ref{welldefinedforcusp}, and \ref{periodfunction} and Remark \ref{nonholo} (5).
\end{rmk}

\section{Regularized $L$-functions}\label{Regularized $L$-function}

This section is devoted to studying the regularized $L$-function of a weakly holomorphic modular form. Especially, we prove that the $L$-function is meromorphic on $\mathbb{C}$ and satisfies a certain functional equation.

Let $g$ be a weakly holomorphic modular form in $M_{k}^{!}(\Gamma_0(4N))$.
Assume that, for each $\gamma = \sm a&b\\c&d\esm\in\SL_2(\ZZ)$, the function $(cz+d)^{-k}g(\gamma z)$ has a Fourier expansion as in (\ref{fourierq}).
For $r\in\QQ$, we define a {\it  regularized twisted $L$-function of $g$ associated with $r$} by
\begin{eqnarray*}
\nonumber L^{\reg}_r(g,s) &:=& \frac{1}{\Gamma(s)}\sum_{n\gg-\infty\atop n\neq0} \frac{b(n)e^{2\pi inr}}{n^s} \Gamma(s, 2\pi n) \\
&&+ \frac{i^{k}}{c^k\Gamma(s)} (2\pi)^{2s-k} (c^2)^{k-s}
\sum_{n\gg-\infty\atop n+\kappa_{\gamma}\neq0} \frac{b_\gamma(n)e^{2\pi i(n+\kappa_\gamma)(-\frac{d}{c})/\lambda_\gamma}}{\left(\frac{n+\kappa_\gamma}{\lambda_\gamma}\right)^{k-s}} \Gamma\left(k-s, \frac{2\pi(n+\kappa_\gamma)}{c^2\lambda_\gamma}\right)\\
\nonumber&& -\frac{(2\pi)^{s} b(0)}{s\Gamma(s)}- \delta_{\kappa_\gamma,0} \frac{i^k (2\pi)^s b_\gamma(0)}{c^k (k-s)\Gamma(s)},
\end{eqnarray*}
where $\delta_{\kappa_\gamma,0}$ is the Kronecker delta, and $\gamma = \sm a&b\\c&d\esm\in\SL_2(\ZZ)$  such that
$r= \gamma(i\infty)$. 
 When the constant term of $g$ is zero, this $L$-function was defined in \cite{F}.
This definition is independent of the choice of $\gamma$ (see Lemma \ref{Lseries} (2)).

\begin{lem} \label{Lseries}
Let $g$ be a weakly holomorphic modular form in $M_{k}^{!}(\Gamma_0(4N))$.
\begin{enumerate}
\item For each $r\in \QQ$, the regularized $L$-function $L^{\reg}_{r}(g,s)$ converges absolutely on $\CC$ except $s = 0$ and $s = k$.
If $g\in S^!_{k}(\Gamma_0(4N))$, then $L^{\reg}_{r}(g,s)$ is entire as a function of $s$, where $S^!_{k}(\Gamma_0(4N))$ denotes the subspace of $M^!_{k}(\Gamma_0(4N))$ consisting of $f$ whose constant terms vanish at all cusps of $\Gamma_0(4N)$.

\item The regularized $L$-function $L^{\reg}_{r}(g,s)$ is the meromorphic continuation of the integral
\begin{equation} \label{integralrepresentation}
\frac{1}{\Gamma(s)}\left(\frac{2\pi}{i}\right)^s\int^{\reg}_{[r,i\infty]} g(\tau)(\tau-r)^{s-1}d\tau,
\end{equation}
which is well defined for $\re(s)>k$.
If $g \in S^!_{k}(\Gamma_0(4N))$, then the integral in (\ref{integralrepresentation}) is entire as a function of $s$.
In particular, the definition of $L^{\reg}_r(g,s)$ is independent of the choice of $\gamma$.

\item Let $\gamma = \sm a&b \\ c&d\esm\in \Gamma_0(4N)$. Then, the regularized $L$-function $L^{\reg}_{r}(g,s)$ satisfies the functional equation
\[
\Gamma(s)(2\pi)^{-s} L^{\reg}_{\gamma(i\infty)}(g,s)
= i^k c^k (c^2)^{-s}\chi_{\theta}^{2k}(\gamma) \Gamma(k-s)(2\pi)^{-(k-s)} L^{\reg}_{\gamma^{-1}(i\infty)}(g,k-s).
\]
\end{enumerate}
\end{lem}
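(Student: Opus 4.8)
The plan is to prove the three parts of Lemma \ref{Lseries} in sequence, deriving each from the explicit series form of $L^{\reg}_r(g,s)$ and the regularized Eichler integral machinery established in Section \ref{regular}.

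For part (1), I would first recall that the coefficients satisfy $b(n) = O(e^{C\sqrt{|n|}})$ and $b_\gamma(n) = O(e^{C\sqrt{|n|}})$ as $n\to\infty$, together with the asymptotic $\Gamma(s,x)\sim e^{-x}x^{s-1}$ of the incomplete gamma function for large $x$. In the first sum $\sum_{n\neq 0} \frac{b(n)e^{2\pi inr}}{n^s}\Gamma(s,2\pi n)$, the exponential decay $e^{-2\pi n}$ coming from $\Gamma(s,2\pi n)$ for $n>0$ dominates the subexponential growth of $b(n)$, while the finitely many terms with $n<0$ cause no convergence issue; an identical argument handles the second sum, whose factors $\Gamma(k-s, 2\pi(n+\kappa_\gamma)/(c^2\lambda_\gamma))$ supply the required decay. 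The only obstructions to entirety are the two explicit correction terms, which contribute poles precisely at $s=0$ (from $1/(s\Gamma(s))$, noting $1/\Gamma(s)$ vanishes at the nonpositive integers but $b(0)/(s\Gamma(s))$ has a simple pole at $s=0$) and at $s=k$ (from $1/((k-s)\Gamma(s))$); when $g\in S^!_k(\Gamma_0(4N))$ both $b(0)$ and $b_\gamma(0)$ vanish at every cusp, so these terms disappear and $L^{\reg}_r(g,s)$ is entire.

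For part (2), I would compare the series for $L^{\reg}_r(g,s)$ with the series expression for the regularized Eichler integral $\int^{\reg}_{[z_1,r]} g(\tau)(\tau-r)^{s-1}d\tau$ obtained in Lemma \ref{welldefinedforcusp}(2) (specializing $s\mapsto s-1$ in formula (\ref{computation2})). The plan is to start from the integral representation valid for $\re(s)>k$, apply the definition (\ref{othercusp}) of the regularized Eichler integral associated with the cusp $r=\gamma(i\infty)$, substitute the Fourier expansion (\ref{fourierq}) of $(c\tau+d)^{-k}g(\gamma\tau)$, and carry out the same termwise integration that produced (\ref{computation2}). Collecting the prefactors $\frac{1}{\Gamma(s)}(2\pi/i)^s$ against the $i$- and $c$-powers appearing there should reproduce the two infinite sums and the two boundary terms of $L^{\reg}_r(g,s)$ verbatim; since that series continues meromorphically to all of $\CC$ by part (1), this identifies $L^{\reg}_r(g,s)$ as the meromorphic continuation of (\ref{integralrepresentation}). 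Independence of $\gamma$ then follows because the regularized Eichler integral in (\ref{othercusp}) was already shown to be independent of the choice of $\gamma$ (using $\chi_\theta(T)=1$), so the integral representation—and hence its continuation—depends only on $r$.

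For part (3), the functional equation, I would exploit the transformation behavior of the regularized Eichler integral between the two cusps $\gamma(i\infty)$ and $\gamma^{-1}(i\infty)$. Using the integral representation from part (2) and the change-of-variables identity $z\mapsto -1/z$ implicit in Lemma \ref{welldefinedforcusp}, applying $\gamma$ to swap $i\infty$ with $\gamma^{-1}(i\infty)$ converts $\int^{\reg}_{[\gamma(i\infty),i\infty]}$ into $\int^{\reg}_{[\gamma^{-1}(i\infty),i\infty]}$ with weight factors $\chi^{2k}_\theta(\gamma)$, a factor $i^k c^k$ from the automorphy factor raised to the relevant power, and a factor $(c^2)^{-s}$ from rescaling the integration variable; the exponent $s$ is sent to $k-s$ because the measure $(\tau-r)^{s-1}d\tau$ transforms with total weight shift matching the complementary argument of the incomplete gamma function. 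The main obstacle I anticipate is bookkeeping the branch of the fractional powers and the precise power of $i$: one must carefully track $\mathrm{arg}$ through the substitution $w=\gamma\tau-r$ followed by $z=-1/w$ (exactly as in the proof of Lemma \ref{welldefinedforcusp}(2)), ensuring that the phases $i^k$ and the factors $(2\pi)^{\pm}$ in $\Gamma(s)(2\pi)^{-s}$ versus $\Gamma(k-s)(2\pi)^{-(k-s)}$ line up; once the integral identity is established for $\re(s)>k$, analytic continuation via part (1) extends the functional equation to all $s\in\CC$.
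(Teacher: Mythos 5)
Your proposal is correct and follows essentially the same route as the paper: part (1) via the asymptotics of $\Gamma(s,x)$ against the growth $b(n),b_\gamma(n)=O(e^{C\sqrt{|n|}})$, part (2) via the decomposition (\ref{twocusp}) of $\int^{\reg}_{[r,i\infty]}$ into the piece computed by Lemma \ref{generalpoint} and the piece given by (\ref{computation2}) (with the paper additionally noting the resulting expression is independent of $t>0$ and setting $t=1$), and part (3) via the same change-of-variables transformation of the regularized Eichler integral with the same branch bookkeeping (the paper's identity $(c^2)^{-k}c^{k}=(-c)^{-k}i^{2k}$). One small caveat: your parenthetical claim that $b(0)/(s\Gamma(s))$ has a simple pole at $s=0$ is inaccurate, since $s\Gamma(s)=\Gamma(s+1)$ makes that term entire; this does not affect the argument, as the paper likewise excludes $s=0$ without relying on a pole there, and the genuine pole for nonzero constant terms occurs only at $s=k$ from the factor $1/((k-s)\Gamma(s))$.
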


\begin{proof} [\bf Proof of Lemma \ref{Lseries}]
(1) Note that $\Gamma(s,x), b(n)$ and $b_\gamma(n)$ have the following
  asymptotic behaviors
\[
\Gamma(s,x) \sim |x|^{s-1}e^{-x},\ x\to \infty
\]
and
\[
b(n) = O\left( e^{C\sqrt{|n|}}\right),\ b_\gamma(n) = O\left( e^{C\sqrt{|n|}}\right),\ n\to\infty
\]
for a constant $C>0$. Therefore, the regularized $L$-function $L^{\reg}_{r}(g,s)$ converges absolutely on $\CC$ except at $s = 0$ and $s = k$.

If $g\in S^!_{k}(\Gamma_0(4N))$, then $b(0) = \delta_{\kappa_\gamma, 0}b_\gamma(0) = 0$.
Let $f_n(s) = \frac{b(n)e^{2\pi inr}}{n^s} \Gamma(s, 2\pi n)$ for $n\in\NN$.
Let $A\subseteq \CC$ be bounded.
Then, we obtain
\[
f'_n(s) =  \frac{b(n)e^{2\pi inr}}{(\mathrm{ln}n) n^s} \Gamma(s, 2\pi n)
+ \frac{b(n)e^{2\pi inr}}{n^s} (s-1)\Gamma(s-1, 2\pi n).
\]
By the asymptotic behavior of the incomplete gamma function, for a given $\epsilon>0$, there exists $N\in\NN$ such that if $n\geq N$ and $s\in A$, then
\[
|\Gamma(s, 2 \pi n)| <(1+\epsilon) (2\pi n)^{s-1} e^{-2\pi n}.
\]
Moreover, since we have
\[
\lim_{n\to\infty} \left(  \frac{b(n)e^{2\pi inr}}{(\mathrm{ln}n)n^s} (2\pi n)^{s-1} + \frac{b(n)e^{2\pi inr}}{n^s} (s-1) (2\pi n)^{s-2}\right) \frac{1}{e^{n}} = 0,
\]
there exists $N'\in \NN$ such that if $n\geq N'$ and $s\in A$, then
\[
\left|  \frac{b(n)e^{2\pi inr}}{(\mathrm{ln}n)n^s} (2\pi n)^{s-1} + \frac{b(n)e^{2\pi inr}}{n^s} (s-1) (2\pi n)^{s-2}\right| <  e^{n}.
\]
Therefore, if $n\geq \mathrm{max}\{N, N'\}$ and $s\in A$, then we have
\[
|f_n'(s)| < (1+\epsilon) e^{(1-2\pi)n}.
\]
This implies that the series $\displaystyle\sum_{n\gg-\infty\atop n\neq 0} f_n'(s)$ converges uniformly on $A$. Therefore, the series
\[
\frac{1}{\Gamma(s)}\sum_{n\gg-\infty\atop n\neq0} \frac{b(n)e^{2\pi inr}}{n^s} \Gamma(s, 2\pi n)
\]
is entire as a function of $s$.
In the same way, we can see that the series
\[
\frac{i^{k}}{\Gamma(s)}\left(\frac{2\pi}{c}\right)^{2s-k}\sum_{n\gg-\infty\atop n+\kappa_{\gamma}\neq0} \frac{b_\gamma(n)e^{2\pi i(n+\kappa_\gamma)(-\frac{d}{c})/\lambda_\gamma}}{\left(\frac{n+\kappa_\gamma}{\lambda_\gamma}\right)^{k-s}} \Gamma\left(k-s, \frac{2\pi(n+\kappa_\gamma)}{c^2\lambda_\gamma}\right)
\]
is also entire as a function of $s$. Hence, $L^{\reg}_{r}(g,s)$ is entire as a function of $s$.

(2) Suppose that $\re(s)>k$.
By the definition of the regularized Eichler integral in (\ref{twocusp}), we have
\begin{equation} \label{decomposition}
\int^{\reg}_{[r,i\infty]} g(\tau)(\tau-r)^{s-1}d\tau = \int^{\reg}_{[z_1,i\infty]}g(\tau)(\tau-r)^{s-1}d\tau - \int^{\reg}_{[z_1,r]} g(\tau)(\tau-r)^{s-1}d\tau
\end{equation}
for any $z_1\in \HH$.
Let $z_1 = r + it$ for $t>0$.
Then, Lemma \ref{generalpoint} and (\ref{computation2}) implies
\begin{equation} \label{lseries1}
 \int^{\reg}_{[z_1,i\infty]}g(\tau)(\tau-r)^{s-1}d\tau=i^s\sum_{n\gg-\infty\atop n\neq0} \frac{b(n)e^{2\pi inr}}{(2\pi n)^s} \Gamma(s, 2\pi nt)
-i^st^s\frac{b(0)}{s}
\end{equation}
and
\begin{eqnarray} \label{lseries2}
\nonumber \int^{\reg}_{[z_1,r]} g(\tau)(\tau-r)^{s-1}d\tau&=& \frac{1}{c^k}(c^2)^{k-s}i^{k+s-2} \sum_{n\gg-\infty\atop n+\kappa
_\gamma\neq 0} \frac{b_\gamma(n)e^{2\pi i\left( \frac{n+\kappa_\gamma}{\lambda_\gamma}\right) \left(-\frac dc\right)}}{\left(2\pi \left(\frac{n+\kappa_\gamma}{\lambda_\gamma}\right)\right)^{k-s}}
 \Gamma\left(k-s, \frac{2\pi}{c^2t}\left(\frac{n+\kappa_\gamma}{\lambda_\gamma}\right)\right)
 \\
&& - \delta_{\kappa_\gamma,0}  \frac{i^{k+s-2}t^{s-k} b_\gamma(0) }{c^k (k-s)}.
\end{eqnarray}

If we combine (\ref{lseries1}) and (\ref{lseries2}), then we obtain
\begin{eqnarray} \label{tindependent}
\nonumber &&\frac{1}{\Gamma(s)}\left(\frac{2\pi}{i}\right)^s\int^{\reg}_{[r,i\infty]} g(\tau)(\tau-r)^{s-1}d\tau\\
 &&= \frac{1}{\Gamma(s)}\sum_{n\gg-\infty\atop n\neq0} \frac{b(n)e^{2\pi inr}}{n^s} \Gamma(s, 2\pi nt) \\
\nonumber &&+ \frac{i^{k}}{c^k\Gamma(s)} (2\pi)^{2s-k} (c^2)^{k-s}
\sum_{n\gg-\infty\atop n+\kappa_{\gamma}\neq0} \frac{b_\gamma(n)e^{2\pi i(n+\kappa_\gamma)(-\frac{d}{c})/\lambda_\gamma}}{\left(\frac{n+\kappa_\gamma}{\lambda_\gamma}\right)^{k-s}} \Gamma\left(k-s, \frac{2\pi(n+\kappa_\gamma)}{c^2t\lambda_\gamma}\right)\\
\nonumber&& -\frac{(2\pi)^{s}t^s b(0)}{s\Gamma(s)}- \delta_{\kappa_\gamma,0} \frac{i^k (2\pi)^s t^{s-k}b_\gamma(0)}{ c^k(k-s)\Gamma(s)},
\end{eqnarray}
for any $t>0$. If we let $t=1$, then we see that
\begin{equation} \label{lseries3}
\frac{1}{\Gamma(s)}\left(\frac{2\pi}{i}\right)^s\int^{\reg}_{[r,i\infty]} g(\tau)(\tau-r)^{s-1}d\tau = L^{\reg}_{r}(g,s)
\end{equation}
for $s>k$. Therefore, the integral $\frac{1}{\Gamma(s)}\left(\frac{2\pi}{i}\right)^s\int^{\reg}_{[r,i\infty]} g(\tau)(\tau-r)^{s-1}d\tau$ can be continued meromorphically to all $s\in\CC$ and is equal to
  the regularized $L$-function $L^{\reg}_{r}(g,s)$.

In the above computation, we need the condition $\re(s)>k$ only for the regularized integral $\int^{\reg}_{[z_1,r]} g(\tau)(\tau-r)^{s-1}d\tau$ in (\ref{decomposition}).
More precisely, we need the condition $\re(s)>k$ to compute the term
\[
\delta_{\kappa_\gamma,0}b_\gamma(0) \int^{i\infty}_{i/t} z^{k-s-1}dz
\]
(see the proof of Lemma \ref{welldefinedforcusp}).
If $g\in S^!_{k}(\Gamma_0(4N))$, then
$\delta_{\kappa_\gamma,0}b_\gamma(0) = 0$, and hence the equality (\ref{lseries3}) holds for all $s$.
Since $L_r^{\reg}(g,s)$ is entire as a function of $s$ for $g\in S^!_{k}(\Gamma_0(4N))$ by (1), the integral in (\ref{integralrepresentation}) gives an entire function  of $s$.

(3) By Lemma \ref{generalpoint} and (\ref{computation2}), we see that
\[
\int^{\reg}_{[z_1,r]}g(\tau)(\tau-r)^{s-1}d\tau = \frac{1}{c^k}(c^2)^{k-s}i^{2s-2}\chi^{2k}_{\theta}(\gamma)\int^{\reg}_{[\gamma^{-1}z_1, i\infty]} g(\tau)\left(\tau-\left(-\frac dc\right)\right)^{k-s-1}d\tau
\]
for $z_1 = r+it\in \HH$,
and the integral in the right hand side is well defined for all $s\in \CC$ except at $s = k$.
Therefore, we have
\begin{eqnarray} \label{ftnaleqn}
&&\Gamma(s)\left(\frac{2\pi}{i}\right)^{-s} L^{\reg}_{\gamma(i\infty)}(g,s) = \int^{\reg}_{[\gamma(i\infty), i\infty]} g(\tau)(\tau-\gamma(i\infty))^{s-1}d\tau\\
\nonumber &&= \int^{\reg}_{[z_1, i\infty]} g(\tau)(\tau-\gamma(i\infty))^{s-1}d\tau
- \int^{\reg}_{[z_1, \gamma(i\infty)]} g(\tau)(\tau-\gamma(i\infty))^{s-1}d\tau\\
\nonumber &&= \int^{\reg}_{[z_1, i\infty]} g(\tau)(\tau-\gamma(i\infty))^{s-1}d\tau - \frac{1}{c^k}(c^2)^{k-s}i^{2s-2}\chi^{2k}_{\theta}(\gamma)\int^{\reg}_{[\gamma^{-1}z_1, i\infty]} g(\tau)(\tau-\gamma^{-1}(i\infty))^{k-s-1}d\tau
\end{eqnarray}
for $z_1 = \gamma(i\infty) + it\in\HH$. Here, we can take any positive real number $t>0$.
Note that (\ref{ftnaleqn}) is equal to
\begin{eqnarray*}
&&- \frac{1}{c^k}(c^2)^{k-s}i^{2s-2}\chi^{2k}_{\theta}(\gamma)\biggl[ \int^{\reg}_{[\gamma^{-1}z_1, i\infty]} g(\tau)(\tau-\gamma^{-1}(i\infty))^{k-s-1}d\tau\\
&& \qquad - c^{k}(c^{2})^{s-k} i^{-(2s-2)}\chi^{-2k}_{\theta}(\gamma)\int^{\reg}_{[z_1, i\infty]} g(\tau)(\tau-\gamma(i\infty))^{s-1}d\tau\biggr]\\
&&= - \frac{1}{c^k}(c^2)^{k-s}i^{2s-2}\chi^{2k}_{\theta}(\gamma)\biggl[ \int^{\reg}_{[\gamma^{-1}z_1, i\infty]} g(\tau)(\tau-\gamma^{-1}(i\infty))^{k-s-1}d\tau\\
&& \qquad - \frac{1}{(-c)^k} (c^2)^{s}i^{2(k-s)-2}\chi^{2k}_{\theta}(\gamma^{-1})
\int^{\reg}_{[z_1, i\infty]} g(\tau)(\tau-\gamma(i\infty))^{s-1}d\tau\biggr]\\
&&= - \frac{1}{c^k}(c^2)^{k-s}i^{2s-2}\chi^{2k}_{\theta}(\gamma) \Gamma(k-s)\left(\frac{2\pi}{i}\right)^{-(k-s)}L^{\reg}_{\gamma^{-1}(i\infty)}(g,k-s).
\end{eqnarray*}
Here, we used the identity $(c^2)^{-k}c^{k} = \frac{1}{(-c)^k}i^{2k}$ for any nonzero real number $c$.
From this, we can derive the desired functional equation for $L^{\reg}_{r}(g,s)$.
\end{proof}

The following theorem shows that, for $g\in H_{2-k}(\Gamma_0(4N))$, the limit of $g^+$  can be described in terms of the special value of the regularized $L$-function of $\xi_{2-k}(g)$.

\begin{thm} \label{mocklimit}
Let $g\in H_{2-k}(\Gamma_0(4N))$.
Suppose that $g^+$ does not have neither principal part nor constant term in the Fourier expansion at the cusp $i\infty$.
Let $\gamma = \sm a&b\\c&d\esm \in \Gamma_0(4N)$ with $c>0$ and $r = \gamma(i\infty)$.
Then for any $x\in\RR$ we have
\begin{align} \label{mockquantum}
 &\lim_{t\to 0} \left[g^+\left(\gamma\left(x+\frac{1}{c^2t}i\right)\right) -\frac{\chi^{-2k}_{\theta}(\gamma)\overline{b(0)}}{2^{k-1}(k-1)} \left( c\left(x+\frac{1}{c^2t}i\right)+d\right)^{2-k}\left(\left(\frac{1}{c^2t}+\frac{1}{c}\right)-i\left(x+\frac dc\right)\right)^{k-1}\right]\\
\nonumber &\qquad  = \frac{\Gamma(k-1)}{(4\pi)^{k-1}}\overline{L^{\reg}_{r}(\xi_{2-k}(g),k-1)} + \frac{(-i)^k \chi^{-2k}_{\theta}(\gamma)}{(2c)^{k-1}}\overline{b(0)},
\end{align}
where $b(0)$ is the constant term of the Fourier expansion of $\xi_{2-k}(g)$ at the cusp $i\infty$.
In particular, if $x = -\frac dc$, then we have
\begin{eqnarray*}
&&\lim_{t\to 0} \left(g^+(r+it) - \frac{\chi^{-2k}_{\theta}(\gamma)\overline{b(0)}}{2^{k-1}(k-1)}  \left( \frac{i}{ct}\right)^{2-k} \left(\frac{1}{c^2t} + \frac 1c\right)^{k-1}\right)\\
&& = \frac{\Gamma(k-1)}{(4\pi)^{k-1}}\overline{L^{\reg}_{r}(\xi_{2-k}(g),k-1)} + \frac{(-i)^k \chi^{-2k}_{\theta}(\gamma)}{(2c)^{k-1}}\overline{b(0)}.
\end{eqnarray*}
\end{thm}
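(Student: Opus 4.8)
The plan is to reduce the limit to a single regularized Eichler integral over $[r,i\infty]$ and then to evaluate that integral through its Fourier series as $\gamma w\to r$, where throughout I write $w=x+\tfrac{i}{c^2t}$ and $G:=\xi_{2-k}(g)\in M_k^!(\Gamma_0(4N))$. Recall from (\ref{nonholomorphicpart}) that $g^-(z)=-\tfrac{1}{(-2i)^{k-1}}\overline{\int^{\reg}_{[z,i\infty]}G(\tau)(\tau-\bar z)^{k-2}d\tau}$. First I would use the modularity of $g$ to write $g^+(\gamma w)=\chi_\theta^{-2k}(\gamma)(cw+d)^{2-k}g(w)-g^-(\gamma w)$, the automorphy factor being $\chi_\theta^{-2k}(\gamma)$ because $\chi_\theta(\gamma)^4=1$. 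In the integral representing $g^-(\gamma w)$ I would split $\int^{\reg}_{[\gamma w,i\infty]}=\int^{\reg}_{[\gamma w,r]}+\int^{\reg}_{[r,i\infty]}$; applying Lemma \ref{welldefinedforcusp}(1) to the $[\gamma w,r]$ piece (note $-c\overline{\gamma w}+a=(c\bar w+d)^{-1}$) together with the $\Gamma_0(4N)$-modularity $(c\tau+d)^{-k}G(\gamma\tau)=\chi_\theta^{2k}(\gamma)G(\tau)$ turns that piece into a multiple of $\overline{g^-(w)}$ which exactly cancels the $g^-(w)$ hidden inside $g(w)$. What remains is the clean identity
\begin{equation*}
g^+(\gamma w)=\chi_\theta^{-2k}(\gamma)(cw+d)^{2-k}g^+(w)+\frac{1}{(-2i)^{k-1}}\,\overline{J(w)},\qquad J(w):=\int^{\reg}_{[r,i\infty]}G(\tau)(\tau-\overline{\gamma w})^{k-2}d\tau.
\end{equation*}

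Since $g^+$ has neither principal part nor constant term at $i\infty$, one has $g^+(w)=O(e^{-2\pi\,\im w})$; as $t\to0$ the imaginary part of $w$ tends to $\infty$ while $(cw+d)^{2-k}$ grows only polynomially, so the first summand tends to $0$ and the entire assertion comes down to the behaviour of $\overline{J(w)}$. The decisive simplification is that $r=\gamma(i\infty)$ is $\Gamma_0(4N)$-equivalent to $i\infty$: in the notation of (\ref{fourierq}) this forces $\kappa_\gamma=0$, $\lambda_\gamma=1$ and $b_\gamma(n)=\chi_\theta^{2k}(\gamma)b(n)$, so that the expansion of $G$ at $r$ is, up to the scalar $\chi_\theta^{2k}(\gamma)$, its expansion at $i\infty$; in particular the constant coefficient of $G$ at $r$ is $\chi_\theta^{2k}(\gamma)b(0)$.

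Next I would compute $J(w)$ explicitly. Running the change of variables used in the proof of Lemma \ref{welldefinedforcusp}(2) ($\tau\mapsto\gamma\tau$, then $\gamma\tau-r$, then its negative reciprocal) but keeping the parameter $\overline{\gamma w}$ in place of $r$, one expresses $J(w)$ as the $s=k-1$ analogue of (\ref{computation2}): a series of incomplete gamma functions $\Gamma(k-1,\cdot)$ weighted by the $b(n)$, together with a single term carrying the constant coefficient $b(0)$. Absolute and uniform convergence, needed to interchange limit and summation, are supplied by Lemmas \ref{gammaasymptotic} and \ref{uniform} and the bound $b(n)=O(e^{C\sqrt{|n|}})$. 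Letting $t\to0$ and invoking the asymptotics $A(x,y,s)\sim e^{-x}(x+iy)^s$ of Lemma \ref{gammaasymptotic}, exactly one contribution diverges, of order $1/t$ and proportional to $\overline{b(0)}$; this is precisely the quantity $\tfrac{\chi_\theta^{-2k}(\gamma)\overline{b(0)}}{2^{k-1}(k-1)}(cw+d)^{2-k}\big((\tfrac{1}{c^2t}+\tfrac1c)-i(x+\tfrac dc)\big)^{k-1}$ subtracted on the left of (\ref{mockquantum}). The convergent remainder is governed by the integral representation of Lemma \ref{Lseries}(2) at $s=k-1$: after conjugation and the elementary identity $\tfrac{1}{(-2i)^{k-1}}\overline{(i/2\pi)^{k-1}}=\tfrac{1}{(4\pi)^{k-1}}$ it reassembles into $\tfrac{\Gamma(k-1)}{(4\pi)^{k-1}}\overline{L^{\reg}_r(G,k-1)}$, while the finite part of the $b(0)$-term left over after removing the $1/t$ divergence produces the additional constant $\tfrac{(-i)^k\chi_\theta^{-2k}(\gamma)}{(2c)^{k-1}}\overline{b(0)}$. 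Assembling these proves (\ref{mockquantum}).

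For the ``in particular'' statement I would specialize $x=-\tfrac dc$, whence $cw+d=\tfrac{i}{ct}$ and $\gamma w=r-\tfrac{1}{c(cw+d)}=r+it$; substituting these into (\ref{mockquantum}) collapses the subtracted factor to $\big(\tfrac{i}{ct}\big)^{2-k}\big(\tfrac{1}{c^2t}+\tfrac1c\big)^{k-1}$ and gives the radial limit at $r+it$. The main obstacle is the bookkeeping of the third paragraph: isolating the one genuinely divergent $O(1/t)$ term and matching its prefactor to the prescribed subtraction, and then checking that the convergent remainder is exactly $\tfrac{\Gamma(k-1)}{(4\pi)^{k-1}}\overline{L^{\reg}_r(G,k-1)}$ rather than a value merely proportional to it---this forces one to keep careful track of the constant emitted by the $b(0)$-term, which is what splits off as the second summand on the right-hand side of (\ref{mockquantum}).
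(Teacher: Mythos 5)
Your proposal is correct and follows essentially the same route as the paper: your ``clean identity'' $g^+(\gamma w)=\chi_\theta^{-2k}(\gamma)(cw+d)^{2-k}g^+(w)+\frac{1}{(-2i)^{k-1}}\overline{J(w)}$ is exactly what the paper obtains (it goes through Lemmas \ref{periodfunction} and \ref{transform}, whose proofs are precisely your splitting at $r$ plus Lemma \ref{welldefinedforcusp}(1)), and the rest coincides as well---series expansion via Lemma \ref{generalpoint} and the analogue of (\ref{computation2}), termwise limits via Lemmas \ref{gammaasymptotic} and \ref{uniform}, isolation of the single divergent $b(0)$-term, and reassembly of the convergent remainder into $L^{\reg}_r(\xi_{2-k}(g),k-1)$ through the $t$-independent representation (\ref{tindependent}) of Lemma \ref{Lseries}(2). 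Even your side remark that the subtracted term diverges at rate $1/t$ is accurate, since $(cw+d)^{2-k}\sim(ct)^{k-2}$ against $\bigl(\frac{1}{c^2t}+\frac1c-i(x+\frac dc)\bigr)^{k-1}\sim(c^2t)^{1-k}$ gives exact order $t^{-1}$ for every $k$.
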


\begin{rmk}
\begin{enumerate}
\item  When $\xi_{2-k}(g)$ is a cusp form, it was proved in \cite{CLR}.
\item Assume that $g$ is a harmonic weak Maass form of real weight $k$ on $\mathrm{SL}_2(\mathbb{Z})$ with $\eta$-multiplier system. Let $G^{-}$ be the regular part of $g^-$, i.e., $$G^{-}(z) =  \sum_{n\geq 1} c^-(n)B_k(2\pi ny)e^{2\pi inx}.$$ For $S=\sm 0 & -1 \\ 1 & 0 \esm$, the limit 
    $\lim_{z\to 0}(G^-|_{k}S-G^-)(z)$ was obtained in \cite{BDE}.
\end{enumerate}
\end{rmk}

\begin{proof} [\bf Proof of Theorem \ref{mocklimit}]
Note that
\begin{eqnarray*}
g^+(\gamma z) &=& \chi^{-2k}_{\theta}(\gamma) (cz+d)^{2-k}(g^+|_{2-k}\gamma)(z)= \chi^{-2k}_{\theta}(\gamma)(cz+d)^{2-k}\biggl[ (g^+|_{2-k}\gamma)(z) - g^+(z) + g^+(z) \biggr]\\
&=& \chi^{-2k}_{\theta}(\gamma)(cz+d)^{2-k}\biggl[ g^-(z) - (g^-|_{2-k}\gamma)(z) + g^+(z) \biggr]
\end{eqnarray*}
for $z\in\HH$.
Since $g^+(z)$ does not have neither principal part nor constant term in the Fourier expansion at the cusp $i\infty$, it decays exponentially as $z$ goes to $i\infty$.
Therefore,  we have
\[
\lim_{t\to 0} \chi^{-2k}_{\theta}(\gamma) \left( c\left( x+\frac{1}{c^2t}i \right) + d \right)^{2-k} g^+\left( x+ \frac{1}{c^2t}i \right) = 0.
\]
This implies that (\ref{mockquantum}) is equal to
\begin{eqnarray*}
&&\lim_{t\to 0} \biggl\{ \chi^{-2k}_{\theta}(\gamma) \left( c\left( x + \frac{1}{c^2t}i \right) + d \right)^{2-k} \biggl[ (g^- - g^-|_{2-k}\gamma)\left( x + \frac{1}{c^2t}i \right) \biggr]\\
&&-\frac{\chi^{-2k}_{\theta}(\gamma)\overline{b(0)}}{2^{k-1}(k-1)} \left( c\left(x + \frac{1}{c^2t} i\right)+d\right)^{2-k}\left(\left(\frac{1}{c^2t}+\frac{1}{c}\right)-i\left(x+\frac dc\right)\right)^{k-1}\biggr\}.
\end{eqnarray*}

Suppose that $z\in\HH$.
By Theorem \ref{maintheoremsection3} and Lemma \ref{transform}, we see that
\begin{eqnarray*}
\chi^{-2k}_{\theta}(\gamma) (cz+d)^{2-k}(g^- - g^-|_{2-k}\gamma)(z) = \frac{-1}{(-2i)^{k-1}} \overline{\int^{\reg}_{[i\infty, \gamma(i\infty)]} \xi_{2-k}(g)(\tau) (\tau-\overline{\gamma z})^{k-2}d\tau}.
\end{eqnarray*}
This implies that (\ref{mockquantum}) is equal to
\begin{eqnarray} \label{mockquantum2}
&&\lim_{t\to 0} \biggl[ \frac{-1}{(-2i)^{k-1}} \overline{\int^{\reg}_{[i\infty, \gamma(i\infty)]} \xi_{2-k}(g)(\tau)\left( \tau- \overline{\gamma\left( x+\frac{1}{c^2t}i\right)}\right)^{k-2}d\tau}\\
\nonumber &&-\frac{\chi^{-2k}_{\theta}(\gamma)\overline{b(0)}}{2^{k-1}(k-1)}\left( c\left(x+\frac{1}{c^2t}i\right)+d\right)^{2-k} \left(\left(\frac{1}{c^2t}+\frac{1}{c}\right)-i\left(x+\frac dc\right)\right)^{k-1}\biggr].
\end{eqnarray}
For any $z_0\in\HH$, by Lemma \ref{welldefinedforcusp},  we have
\begin{eqnarray*}
&&\frac{-1}{(-2i)^{k-1}} \overline{\int^{\reg}_{[i\infty, \gamma(i\infty)]} \xi_{2-k}(g)(\tau)\left(\tau-\overline{\gamma\left(x+\frac{1}{c^2t}i\right)}\right)^{k-2}d\tau}\\
&&= \frac{-1}{(-2i)^{k-1}} \biggl[\overline{\int^{\reg}_{[z_0, \gamma(i\infty)]} \xi_{2-k}(g)(\tau)\left(\tau-\overline{\gamma\left(x+\frac{1}{c^2t}i\right)}\right)^{k-2}d\tau} \\
&&- \overline{\int^{\reg}_{[z_0, i\infty]} \xi_{2-k}(g)(\tau)\left(\tau-\overline{\gamma\left(x+\frac{1}{c^2t}i\right)}\right)^{k-2}d\tau}\biggr]\\
&&=  \frac{-1}{(-2i)^{k-1}}\chi^{-2k}_{\theta}(\gamma) \left( c\left( x + \frac{1}{c^2t}i \right) + d\right)^{2-k} \overline{\int^{\reg}_{[\gamma^{-1}(z_0), i\infty]} \xi_{2-k}(g)(\tau)\left(\tau-\overline{x+\frac{1}{c^2t}i}\right)^{k-2}d\tau}\\
&& + \frac{1}{(-2i)^{k-1}}\overline{\int^{\reg}_{[z_0, i\infty]} \xi_{2-k}(g)(\tau)\left(\tau-\overline{\gamma\left(x+\frac{1}{c^2t}i\right)}\right)^{k-2}d\tau}.
\end{eqnarray*}
Suppose that $(cz+d)^{-k}\xi_{2-k}(g)(\gamma z)$ has a Fourier expansion of the form as in (\ref{fourierq}) for $\gamma = \sm a&b\\c&d\esm\in \SL_2(\ZZ)$.
We take $z_0 = \frac{a}{c} + \frac{1}{c}i \in \HH$.
Note that $\gamma^{-1}(z_0) = -\frac dc + \frac{1}{c}i$.
Then by Lemma \ref{generalpoint}, we obtain
\begin{eqnarray} \label{mockquantum3}
\lim_{t\to0}  && \biggl[ \frac{-1}{(-2i)^{k-1}}\chi^{-2k}_{\theta}(\gamma) \overline{\int^{\reg}_{[\gamma^{-1}(z_0), i\infty]} \xi_{2-k}(g)(\tau)\left(\tau-\overline{x+\frac{1}{c^2t}i}\right)^{k-2}d\tau}\\
\nonumber &&-\frac{\chi^{-2k}_{\theta}(\gamma)\overline{b(0)}}{2^{k-1}(k-1)} \left(\left(\frac{1}{c^2t}+\frac{1}{c}\right)-i\left(x+\frac dc\right)\right)^{k-1}\biggr] \left( c\left( x + \frac{1}{c^2t}i \right) + d\right)^{2-k} \\
\nonumber= \lim_{t\to 0} && \frac{-1}{(-2i)^{k-1}}\chi^{-2k}_{\theta}(\gamma) \left( c\left( x + \frac{1}{c^2t}i \right) + d\right)^{2-k} \left(\frac{-i}{2\pi}\right)^{k-1}\\
\nonumber &&\qquad \times \sum_{n\gg-\infty\atop n\neq0} \overline{\left(\frac{b(n)}{n^{k-1}}\right)}e^{2\pi in\frac{d}{c}}e^{2\pi n \frac{1}{c^2t}} \overline{A\left( 2\pi n \left(\frac{1}{c^2t} + \frac{1}{c}\right), 2\pi n\left( x + \frac{d}{c}\right), k-2\right)}.
\end{eqnarray}
By Lemma \ref{gammaasymptotic} and \ref{uniform}, the last limit in (\ref{mockquantum3}) is equal to
\begin{eqnarray*}
&&\lim_{t\to 0} \biggl[\frac{-1}{(4\pi)^{k-1}} \chi^{-2k}_{\theta}(\gamma) \left( c\left(x + \frac{1}{c^2t}i\right) + d\right)^{2-k}\\
&&\times \sum_{n\gg-\infty\atop n\neq 0} \overline{\left(\frac{b(n)}{n^{k-1}}\right)} e^{2\pi in\frac dc}e^{2\pi n\frac{1}{c^2t}}e^{-2\pi n\left(\frac{1}{c^2t} + \frac 1c\right)} \overline{\left(2\pi n\left(\frac{1}{c^2t} + \frac 1c\right)+2\pi ni\left(x+\frac dc\right)\right)^{k-2}}\biggr]\\
&&= \frac{(-i)^{k}}{2^{k-1}2\pi c^{k-2}} \sum_{n\gg-\infty\atop n\neq0} \frac{\overline{b_\gamma(n)}}{n} e^{2\pi in\frac dc}e^{-2\pi n\frac1c}.
\end{eqnarray*}
In a similar way, we also see that
\begin{align*}
&\lim_{t\to0} \frac{1}{(-2i)^{k-1}} \overline{\int^{\reg}_{[z_0,i\infty]} \xi_{2-k}(g)(\tau)\left(\tau-\overline{\gamma \left(x+\frac1{c^2t}i\right)}\right)^{k-2}d\tau}\\
&=\lim_{t\to0} \frac{1}{(-2i)^{k-1}} \biggl[ \left(\frac{-i}{2\pi}\right)^{k-1}\sum_{n\gg-\infty\atop n\neq0} \overline{\left(\frac{b(n)}{n^{k-1}}\right)} e^{2\pi in(-\frac ac)}e^{2\pi na(t)}\\
&\qquad \times\overline{A\left(2\pi n\left( \frac1c + a(t)\right), 2\pi n\left(c(t) - \frac ac\right), k-2\right)}\\
&\qquad -\frac{\overline{b(0)}(-i)^{k-1}}{k-1}\overline{\left(\left(\frac1c +a(t)\right) + i\left(c(t)-\frac ac\right)\right)^{k-1}}\biggr]\\
& =\frac{1}{(-2i)^{k-1}} \biggl[ \left(\frac{-i}{2\pi}\right)^{k-1} \sum_{n\gg-\infty\atop n\neq 0} \overline{\left(\frac{b(n)}{n^{k-1}}\right)} e^{2\pi in(-\frac ac)}A\left( 2\pi n\frac1c, 0, k-2\right) - \frac{\overline{b(0)}(-i)^{k-1}}{c^{k-1}(k-1)}\biggr]\\
&=\frac{1}{(4\pi)^{k-1}} \sum_{n\gg-\infty\atop n\neq0} \overline{\left(\frac{b(n)}{n^{k-1}}\right)} e^{2\pi in(-\frac ac)}\Gamma\left(k-1, \frac{2\pi n}{c}\right) - \frac{\overline{b(0)}}{(2c)^{k-1}(k-1)},
\end{align*}
where $a(t) = \mathrm{Im}\left( \gamma\left(x+\frac{1}{c^2t}i\right)\right)$ and $c(t) = \mathrm{Re}\left( \gamma\left(x+\frac{1}{c^2t}i\right)\right)$ for $t>0$.
Since $L^{\reg}_r(\xi_{2-k}(g), s)$ can be written as in (\ref{tindependent}), we see that
(\ref{mockquantum}) is equal to
\begin{eqnarray*}
&&\frac{1}{(4\pi)^{k-1}} \sum_{n\gg-\infty\atop n\neq0} \overline{\left(\frac{b(n)}{n^{k-1}}\right)}e^{2\pi in(-\frac ac)} \Gamma\left(k-1, \frac{2\pi n}{c}\right)\\
&&+\frac{(-i)^k}{2^{k-1}2\pi c^{k-2}}\sum_{n\gg-\infty\atop n\neq0} \frac{\overline{b_\gamma(n)}}{n}e^{2\pi in\frac dc}\Gamma\left(1, \frac{2\pi n}{c}\right) - \frac{\overline{b(0)}}{(2c)^{k-1}(k-1)}\\
&&=\frac{\Gamma(k-1)}{(4\pi)^{k-1}}\overline{L^{\reg}_{r}(\xi_{2-k}(g), k-1)} + \frac{(-i)^k \chi^{-2k}_{\theta}(\gamma)\overline{b(0)}}{(2c)^{k-1}}.
\end{eqnarray*}
\end{proof}

Let $g\in H_{2-k}(\Gamma_0(4N))$ and  $r$ be a rational number which is equivalent to $i\infty$ under the action of $\Gamma_0(4N)$.
Then we define $Q_g(r)$ by
\[
\lim_{t\to 0} \left[g^+\left(\gamma\left(x+\frac{1}{c^2t}i\right)\right) -\frac{\chi^{-2k}_{\theta}(\gamma)\overline{b(0)}}{2^{k-1}(k-1)} \left( c\left(x+\frac{1}{c^2t}i\right)+d\right)^{2-k}\left(\left(\frac{1}{c^2t}+\frac{1}{c}\right)-i\left(x+\frac dc\right)\right)^{k-1}\right].
\]
The following corollary shows that the period function of $Q_g$ is the limit value of the period function of $g^+$ if the constant term of $\xi_{2-k}(g)$ at the cusp $i\infty$ vanishes.

\begin{cor} \label{limitperiod}
Let $g\in H_{2-k}(\Gamma_0(4N))$ such that
$g^+$ does not have neither principal part nor constant term in the Fourier expansion at the cusp $i\infty$.
Suppose that
the constant term of $\xi_{2-k}(g)$ at the cusp $i\infty$ vanishes.
Let $\psi(g)_\gamma = g^+ - g^+|_{2-k}\gamma$ for $\gamma\in \Gamma_0(4N)$.
Then, we have
\[
(Q_g - Q_g|_{2-k}\gamma')(r) = \lim_{t\to\infty} \psi(g)_{\gamma'}(\gamma (x+it))
\]
for $\gamma'\in \Gamma_0(4N)$ and $r\in\QQ$ equivalent to $i\infty$ under the action of $\Gamma_0(4N)$ with $r\neq (\gamma')^{-1}(i\infty)$.
\end{cor}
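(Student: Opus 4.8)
The plan is to exploit the hypothesis that the constant term $b(0)$ of $\xi_{2-k}(g)$ at $i\infty$ vanishes, which annihilates the correction term in the definition of $Q_g$. Writing $\gamma = \sm a&b\\c&d\esm \in \Gamma_0(4N)$ with $\gamma(i\infty) = r$ and $c>0$, the defining formula for $Q_g$ collapses to
\[
Q_g(r) = \lim_{t\to 0} g^+\!\left(\gamma\left(x + \frac{1}{c^2t}\,i\right)\right),
\]
and the same reduction holds at every cusp equivalent to $i\infty$. I would then decompose the period function evaluated at $w = \gamma(x+it)$ by its very definition, $\psi(g)_{\gamma'} = g^+ - g^+|_{2-k}\gamma'$, and compute the limit of each summand separately as $t\to\infty$ (equivalently as $w\to r$), so that convergence of the left-hand side and its independence of $x$ will emerge from the argument rather than being assumed.

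For the first summand, I reparametrize by setting $t = \frac{1}{c^2 s}$, so that $t\to\infty$ corresponds to $s\to 0$ and $\gamma(x+it) = \gamma\bigl(x + \frac{1}{c^2 s}i\bigr)$. Theorem \ref{mocklimit}, together with $b(0)=0$, gives
\[
\lim_{t\to\infty} g^+(\gamma(x+it)) = Q_g(r),
\]
independently of $x$. For the second summand I unfold the slash operator: with $\gamma' = \sm a'&b'\\c'&d'\esm$,
\[
(g^+|_{2-k}\gamma')(\gamma(x+it)) = \overline{\chi_{\theta}(\gamma')}^{\,2(2-k)}\bigl(c'\gamma(x+it)+d'\bigr)^{-(2-k)} g^+\bigl(\gamma'\gamma(x+it)\bigr).
\]
Since $r \neq (\gamma')^{-1}(i\infty) = -d'/c'$, the automorphy factor tends to the finite nonzero value $(c'r+d')^{-(2-k)}$. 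Setting $\gamma'' := \gamma'\gamma \in \Gamma_0(4N)$ (so that $\gamma''(i\infty) = \gamma' r$), I again reparametrize and invoke Theorem \ref{mocklimit} to obtain $\lim_{t\to\infty} g^+(\gamma''(x+it)) = Q_g(\gamma' r)$; multiplying the two limits recovers exactly $(Q_g|_{2-k}\gamma')(r) = \overline{\chi_{\theta}(\gamma')}^{\,2(2-k)}(c'r+d')^{-(2-k)} Q_g(\gamma' r)$.

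Subtracting the two limits then yields
\[
\lim_{t\to\infty}\psi(g)_{\gamma'}(\gamma(x+it)) = Q_g(r) - (Q_g|_{2-k}\gamma')(r) = (Q_g - Q_g|_{2-k}\gamma')(r),
\]
which is the claim. The one point requiring care is the application of Theorem \ref{mocklimit} to $\gamma''$: its statement assumes a positive lower-left entry, so if $\gamma''$ has negative lower-left entry I would replace it by $-\gamma''$, using that $\gamma'' z$ and $g^+|_{2-k}(\pm\gamma'')$ are insensitive to this sign, and I must propagate the theta-multiplier and the weight-$(2-k)$ automorphy factors consistently through the cocycle relation $\chi^{2k}_{\theta}(\gamma_i^{-1}\gamma)j(\gamma_i^{-1}\gamma,w)^{k-2} = \chi^{2k}_{\theta}(\gamma_i^{-1})j(\gamma_i^{-1},\gamma w)^{k-2}\chi^{2k}_{\theta}(\gamma)j(\gamma,w)^{k-2}$ already used in Lemma \ref{transform}. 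These normalization checks are the main (though routine) obstacle; all of the analytic content is supplied by Theorem \ref{mocklimit}.
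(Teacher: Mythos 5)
Your proposal is correct, but it takes a genuinely more direct route than the paper's own proof. You split $\psi(g)_{\gamma'} = g^+ - g^+|_{2-k}\gamma'$ and evaluate the two limits separately: since $b(0)=0$ kills both the correction term in the definition of $Q_g$ and the matrix-dependent term $\frac{(-i)^k\chi_{\theta}^{-2k}(\gamma)}{(2c)^{k-1}}\overline{b(0)}$ in Theorem \ref{mocklimit}, the limit $\lim_{t\to\infty} g^+(\tilde{\gamma}(x+it))$ depends only on the cusp $\tilde{\gamma}(i\infty)$ and not on $x$ or the choice of matrix, which is exactly what lets you apply the theorem to $\gamma''=\gamma'\gamma$ (or $-\gamma''$). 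Note also that the hypothesis $r\neq(\gamma')^{-1}(i\infty)$ does double duty in your argument: it makes the unfolded automorphy factor converge to a nonzero value \emph{and} guarantees the lower-left entry of $\gamma''$ is nonzero, so your sign-flip trick covers every case — this second use is worth stating explicitly. The paper proceeds differently: it uses (\ref{mockquantum2}) to rewrite $Q_g(r)$ and $(Q_g|_{2-k}\gamma')(r)$ as limits of the regularized Eichler integrals over $[i\infty,\gamma(i\infty)]$ and $[i\infty,\gamma'\gamma(i\infty)]$, shifts the latter path via Lemma \ref{transform} so that the difference collapses to the single cocycle integral over $[i\infty,\gamma'^{-1}(i\infty)]$, and then invokes Lemma \ref{periodfunction} together with the modularity of $g$ (so that $-(g^- - g^-|_{2-k}\gamma') = \psi(g)_{\gamma'}$). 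Your approach buys brevity and avoids the Eichler-integral machinery entirely; in fact the cocycle relation you worry about propagating never actually enters your computation, since the only slash that appears is unfolded directly from its definition. What the paper's route buys is that the multiplier and automorphy-factor bookkeeping is absorbed wholesale into Lemma \ref{transform}, and it produces the nonholomorphic-part cocycle identity in precisely the form reused later in the proof of Theorem \ref{determine}. The one point to make airtight in your version is the branch continuity of $(c'w+d')^{k-2}$ as $w\to r$ from $\HH$ under the convention $-\pi<\mathrm{arg}(z)\leq\pi$: normalize $\gamma'$ by replacing it with $-\gamma'$ if necessary (which leaves $|_{2-k}\gamma'$ unchanged, as $\chi_{\theta}^4=1$) so that $c'\geq 0$; then $\mathrm{Im}(c'w+d')>0$ along your path and the power converges to its convention value even when $c'r+d'<0$ — the paper silently performs the same interchange of limit and constant in passing from $(c'r+d')^{k-2}Q_g(\gamma'r)$ to the factor $j(\gamma',\gamma(x+it))^{k-2}$ inside the limit.
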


\begin{proof} [\bf Proof of Corollary \ref{limitperiod}]
Suppose that $(cz+d)^{-k}\xi_{2-k}(g)(\gamma z)$ has a Fourier expansion of the form as in (\ref{fourierq}) for $\gamma = \sm a&b\\c&d\esm\in \SL_2(\ZZ)$.
By the assumption, we have $b(0)= 0$.
By (\ref{mockquantum2}), we have
\begin{eqnarray} \label{limitperiod1}
&&Q_g(r) = \lim_{t\to 0} \biggl[ \frac{-1}{(-2i)^{k-1}} \overline{\int^{\reg}_{[i\infty, \gamma(i\infty)]} \xi_{2-k}(g)(\tau)\left( \tau- \overline{\gamma\left( x+\frac{1}{c^2t}i\right)}\right)^{k-2}d\tau}\biggr]\\
\nonumber &&= \lim_{t\to \infty} \biggl[ \frac{-1}{(-2i)^{k-1}} \overline{\int^{\reg}_{[i\infty, \gamma(i\infty)]} \xi_{2-k}(g)(\tau)\left( \tau- \overline{\gamma\left( x+it\right)}\right)^{k-2}d\tau}\biggr],
\end{eqnarray}
where $\gamma = \sm a&b\\c&d\esm \in \Gamma_0(4N)$ with $\gamma(i\infty) = r$.
Note that (\ref{limitperiod1}) holds for any $x\in\RR$, and we have
\begin{eqnarray} \label{limitperiod2}
&&(Q_g|_{2-k}\gamma')(r) = \chi^{2k}_{\theta}(\gamma')(c'r+d')^{k-2}Q_g(\gamma' r)\\
\nonumber && = \chi^{2k}_{\theta}(\gamma')(c'r+d')^{k-2}\lim_{t\to \infty} \biggl[ \frac{-1}{(-2i)^{k-1}} \overline{\int^{\reg}_{[i\infty, \gamma'\gamma(i\infty)]} \xi_{2-k}(g)(\tau)\left( \tau- \overline{
\gamma'\gamma\left( x+it\right)}\right)^{k-2}d\tau}\biggr]\\
\nonumber &&= \lim_{t\to \infty} \biggl[ \frac{-\chi^{2k}_{\theta}(\gamma') j\left(\gamma', \gamma\left(x+it\right)\right)^{k-2}}{(-2i)^{k-1}} \overline{\int^{\reg}_{[i\infty, \gamma'\gamma(i\infty)]} \xi_{2-k}(g)(\tau)\left( \tau- \overline{
\gamma'\gamma\left( x+it\right)}\right)^{k-2}d\tau}\biggr],
\end{eqnarray}
where $\gamma' = \sm a'&b'\\ c'&d'\esm$.
By Lemma \ref{transform}, (\ref{limitperiod2}) is equal to
\begin{eqnarray} \label{limitperiod3}
&&\lim_{t\to \infty} \biggl[ \frac{-\chi^{2k}_{\theta}(\gamma') j\left(\gamma', \gamma\left(x+it\right)\right)^{k-2}\chi^{-2k}_{\theta}(\gamma')j\left(\gamma'^{-1}, \gamma'\gamma\left(x+it\right)\right)^{k-2}}{(-2i)^{k-1}}\\
\nonumber &&\times  \overline{\int^{\reg}_{[\gamma'^{-1}(i\infty), \gamma(i\infty)]} \xi_{2-k}(g)(\tau)\left( \tau- \overline{
\gamma\left( x+it\right)}\right)^{k-2}d\tau}\biggr]\\
\nonumber && = \lim_{t\to\infty} \frac{-1}{(-2i)^{k-1}}\biggl[\overline{\int^{\reg}_{[\gamma'^{-1}(i\infty), \gamma(i\infty)]} \xi_{2-k}(g)(\tau)\left( \tau- \overline{
\gamma\left( x+it\right)}\right)^{k-2}d\tau}\biggr].
\end{eqnarray}
Here, we used the fact that $j(\gamma'^{-1}, \gamma' z) = j(\gamma', z)^{-1}$ for any $z\in \HH$.
If we combine (\ref{limitperiod1}) and (\ref{limitperiod3}), then we have
\begin{equation} \label{limitperiod4}
Q_g(r) - (Q_g|_{2-k}\gamma')(r) = \lim_{t\to\infty} \frac{-1}{(-2i)^{k-1}}\biggl[\overline{\int^{\reg}_{[i\infty, \gamma'^{-1}(i\infty)]} \xi_{2-k}(g)(\tau)\left( \tau- \overline{
\gamma\left( x+it\right)}\right)^{k-2}d\tau}\biggr].
\end{equation}
By Lemma \ref{periodfunction}, (\ref{limitperiod4}) is equal to
\begin{equation} \label{limitperiod5}
-\lim_{t\to\infty} (g^- - g^-|_{2-k}\gamma')(\gamma (x+it)) = \lim_{t\to\infty} (g^+ - g^+|_{2-k}\gamma')(\gamma (x+it))= \lim_{t\to\infty} \psi(g)_{\gamma'}(\gamma (x+it)).
\end{equation}
\end{proof}

\begin{rmk}
Let $g\in H_{2-k}(\Gamma_0(4N))$ such that the constant term of $\xi_{2-k}(g)$ vanishes.
The function $Q_{g}(r)$ for a rational number $r = \frac ab$ with $b\equiv 0\ (\m\ 4N)$ and $\mathrm{gcd}(a,b) = 1$
 induces a function on $\QQ$  in the form of $\frac{\Gamma(k-1)}{(4\pi)^{k-1}}\overline{L^{\reg}_{r}(\xi_{2-k}(g),k-1)}$ by (\ref{mockquantum}).
The equality (\ref{limitperiod4}) shows that this function can be understood as a quantum modular form, which is a new modular object introduced by Zagier \cite{Zag2}.
Quantum modular forms have been   studied in connection with numerous different topics, such as Maass forms, mock theta functions, and strongly unimodal sequences (for example, see \cite{Bru0, CLR, FOR}).
\end{rmk}

In the following corollary, we consider the case $r = (\gamma')^{-1}(i\infty)$.

\begin{cor} \label{psilimit}
Let $g\in H_{2-k}(\Gamma_0(4N))$.
Suppose that $g^+$ does not have neither principal part nor constant term in the Fourier expansion at the cusp $i\infty$.
Let $\gamma = \sm a&b\\c&d\esm\in \Gamma_0(4N)$ with $r = \gamma(i\infty) \in \QQ$ and $c>0$.
Then, we have
\begin{eqnarray} \label{psilimit2}
&&\lim_{t\to 0}\left[ \psi(g)_{\gamma^{-1}}(r+it) - \frac{\chi^{-2k}_{\theta}(\gamma)\overline{b(0)}}{2^{k-1}(k-1)}  \left( \frac{i}{ct}\right)^{2-k} \left(\frac{1}{c^2t} + \frac 1c\right)^{k-1}\right]\\
\nonumber && = \frac{\Gamma(k-1)}{(4\pi)^{k-1}}\overline{L^{\reg}_{r}(\xi_{2-k}(g),k-1)} + \frac{(-i)^k \chi^{-2k}_{\theta}(\gamma)}{(2c)^{k-1}}\overline{b(0)},
\end{eqnarray}
where $b(0)$ is the constant term of the Fourier expansion of $\xi_{2-k}(g)$ at the cusp $i\infty$.
\end{cor}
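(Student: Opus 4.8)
The key observation is that the right-hand side of (\ref{psilimit2}) is \emph{identical} to the right-hand side of Theorem \ref{mocklimit} in its special case $x=-\frac dc$, and that the subtracted correction term in (\ref{psilimit2}) is exactly the one appearing there: substituting $x=-\frac dc$ into the correction term of Theorem \ref{mocklimit} gives $c\left(x+\frac{i}{c^2t}\right)+d=\frac{i}{ct}$ and $x+\frac dc=0$, so it collapses to $\frac{\chi^{-2k}_{\theta}(\gamma)\overline{b(0)}}{2^{k-1}(k-1)}\left(\frac{i}{ct}\right)^{2-k}\left(\frac{1}{c^2t}+\frac1c\right)^{k-1}$. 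Moreover a direct computation, using $ad-bc=1$ and $r=\frac ac$, shows $\gamma\!\left(-\frac dc+\frac{i}{c^2t}\right)=r+it$, so the special case of Theorem \ref{mocklimit} is precisely a statement about $g^+(r+it)$. The plan is therefore to reduce (\ref{psilimit2}) to Theorem \ref{mocklimit} together with a single vanishing-limit estimate.

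By the definition $\psi(g)_{\gamma^{-1}}=g^+-g^+|_{2-k}\gamma^{-1}$, I would write
\[
\psi(g)_{\gamma^{-1}}(r+it)=g^+(r+it)-(g^+|_{2-k}\gamma^{-1})(r+it).
\]
Subtracting the correction term and letting $t\to 0$, the $g^+(r+it)$ contribution is handled verbatim by the case $x=-\frac dc$ of Theorem \ref{mocklimit}, which yields the asserted right-hand side. It thus remains only to show
\[
\lim_{t\to 0}(g^+|_{2-k}\gamma^{-1})(r+it)=0.
\]

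For this vanishing claim I would estimate the two factors of the slash directly. Writing $\gamma^{-1}=\sm d&-b\\-c&a\esm$, one computes $-c(r+it)+a=-cit$, so that $\mathrm{Im}(\gamma^{-1}(r+it))=\frac{t}{|{-cit}|^2}=\frac{1}{c^2t}\to\infty$ as $t\to 0$, while the automorphy factor satisfies $\left|(-c(r+it)+a)^{k-2}\right|=(ct)^{k-2}$. Since $g^+$ has neither principal part nor constant term at $i\infty$ and $\chi_{\theta}(T)=1$, its Fourier expansion runs over positive integer indices, whence $g^+(w)=O\!\left(e^{-2\pi\,\mathrm{Im}(w)}\right)$ as $\mathrm{Im}(w)\to\infty$. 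Combining these gives
\[
\big|(g^+|_{2-k}\gamma^{-1})(r+it)\big|\le C\,(ct)^{k-2}\,e^{-2\pi/(c^2t)},
\]
and since $e^{-2\pi/(c^2t)}$ decays faster than any power of $t$ as $t\to 0^{+}$, this tends to $0$. This estimate is the only point requiring care: when $k>2$ the automorphy factor $(ct)^{k-2}$ blows up, but the exponential decay of $g^+$ at $i\infty$ dominates it regardless of the sign of $k-2$. Combining the vanishing limit with Theorem \ref{mocklimit} establishes (\ref{psilimit2}).
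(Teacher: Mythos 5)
Your proposal is correct and takes essentially the same route as the paper's proof: the paper likewise decomposes $\psi(g)_{\gamma^{-1}}(r+it)=g^+(r+it)-(g^+|_{2-k}\gamma^{-1})(r+it)$, asserts $\lim_{t\to 0}(g^+|_{2-k}\gamma^{-1})(r+it)=0$ from the absence of principal part and constant term of $g^+$ at $i\infty$, and then invokes the $x=-\frac dc$ case of Theorem \ref{mocklimit}, which is exactly your reduction. Your explicit bound $C\,(ct)^{k-2}e^{-2\pi/(c^2t)}$ merely fills in the vanishing step the paper leaves implicit, and it is valid; the only slip is cosmetic: the automorphy factor $(ct)^{k-2}$ blows up as $t\to 0^{+}$ when $k<2$, not $k>2$, though as you note the exponential decay dominates regardless of the sign of $k-2$.
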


\begin{proof} [\bf Proof of Corollary \ref{psilimit}]
Note that
\[
\psi(g)_{\gamma^{-1}}(r+it) = g^+(r+it) - (g^+|_{2-k}\gamma^{-1})(r+it)
\]
for $t>0$, and
\[
\lim_{t\to 0} (g^+|_{2-k}\gamma^{-1})(r+it) = 0
\]
since $g^+$ does not have neither principal part nor constant term in the Fourier expansion at the cusp $i\infty$. Therefore, (\ref{psilimit2}) is equal to
\[
\lim_{t\to 0}\left[ g^+(r+it) - \frac{\chi^{-2k}_{\theta}(\gamma)\overline{b(0)}}{2^{k-1}(k-1)}  \left( \frac{i}{ct}\right)^{2-k} \left(\frac{1}{c^2t} + \frac 1c\right)^{k-1}\right].
\]
By Theorem \ref{mocklimit}, we get the desired result.
\end{proof}

\section{Proofs of Theorem \ref{trace} and Corollary \ref{period}} \label{proof1}
In this section, we prove Theorem \ref{trace} and Corollary \ref{period}. Theorem \ref{trace} is implied by Theorem Theorem \ref{mocklimit}.

\begin{proof} [\bf Proof of Theorem \ref{trace}]
Note that $H(f)$ is the holomorphic part of a harmonic weak Maass form $\hat{H}(f) \in H_{\frac12}(\Gamma_0(4N))$ and $\xi_{\frac12}(\hat{H}(f)) = -2W(f)$ by Corollary \ref{scalar}.
The constant term of $\xi_{\frac12}(\hat{H}(f))$ is $-\overline{\mathrm{Tr}_0(f)}$.
Therefore, by Theorem \ref{mocklimit}, we have
\begin{eqnarray} \label{tracelimit}
&&\lim_{t\to0} \left( \sum_{d>0}\mathrm{Tr}_d(f)e^{2\pi id(r+it)} - \sqrt{2}\chi^{-3}_{\theta}(\gamma)(-\mathrm{Tr}_0(f))\left(\frac{i}{ct}\right)^{\frac12}\left(\frac{1}{c^2t}+\frac 1c\right)^{\frac12}\right)\\
\nonumber && = -\overline{L^{\reg}_{r}\left(W(f),\frac12\right)} + \frac{(-i)^{\frac32}\chi^{-3}_{\theta}(\gamma)(-\mathrm{Tr}_0(f))}{\sqrt{2c}},
\end{eqnarray}
where $\gamma = \sm a&b\\c&d\esm \in \Gamma_0(4N)$.
By the definition of the multiplier system $\chi_{\theta}$, we have
\[
\chi_{\theta}(\gamma) = \varepsilon_d^{-1} \left(\frac{c}{d}\right).
\]
Since $a \equiv d\ (\mathrm{mod}\ 4)$, we see that $\varepsilon_d = \varepsilon_a$.
Let $c = 2^n c'$ such that $n\in\NN$ and $c' \equiv 1\ (\mathrm{mod}\ 2)$.
Since $\sm a&b\\c&d\esm \in \Gamma_0(4N)$, we have $n\geq2$ and $\mathrm{gcd}(c', d) = 1$.
By the definition of the symbol $\left(\frac cd\right)$, we obtain
\begin{eqnarray*}
\biggl( \frac cd\biggr) = \left( \frac 2d\right)^n \left( \frac{c'}d\right)
= \left( \frac 2d\right)^n \left( \frac d{c'}\right) (-1)^{\frac{c'-1}2\frac{d-1}2}.
\end{eqnarray*}
Note that
\[
\left( \frac d{c'}\right) \biggl(\frac a{c'} \biggr) = \left( \frac{ad}{c'}\right) = \left( \frac{ad-bc}{c'}\right) = \left( \frac 1{c'}\right) = 1.
\]
This implies that $\left( \frac d{c'}\right) = \left( \frac a{c'}\right)$.
Since $a\equiv d\ (\mathrm{mod}\ 4)$, we have $(-1)^{\frac{c'-1}2 \frac{d-1}2} = (-1)^{\frac{c'-1}2 \frac{a-1}2}$.
Therefore, we see that
\[
\biggl( \frac{c'}{d}\biggr) = \left( \frac d{c'}\right) (-1)^{\frac{c'-1}2\frac{d-1}2} = \biggl( \frac a{c'}\biggr) (-1)^{\frac{c'-1}2\frac{a-1}2} = \left( \frac{c'}a\right).
\]
If $n = 2$, then we obtain
\[
\left( \frac 2d\right)^n = 1 = \left( \frac 2a\right)^n.
\]
If $n\geq3$, then we have $ad \equiv 1\ (\mathrm{mod}\ 8)$, and hence $a \equiv d\ (\mathrm{mod}\ 8)$. Therefore, we see that
\[
\left( \frac 2d\right) = (-1)^{\frac{d^2-1}8} = (-1)^{\frac{a^2-1}8}=\left( \frac 2a\right).
\]
If we combine all these equalities, then we obtain $\left( \frac cd\right) = \left( \frac ca\right)$.
This implies that
\[
\chi_{\theta}(\gamma) = \varepsilon_a^{-1} \left(\frac ca\right).
\]
With this, we can see that (\ref{tracelimit}) can be written as
\begin{equation*}
\lim_{t \to 0} \left( \sum_{d>0} \Tr_d(f)e^{2\pi id(r+it)}+ \frac{2c_r\sqrt{1+ct}}{ct} \right) = - \overline{L^{\reg}_r\left(W(f), \frac12\right)}+c_r,
\end{equation*}
where $c_r=\frac{e^{i\pi/4}\varepsilon_a^{3}}{\sqrt{2c}}\left(\frac{c}{a}\right)\Tr_0(f)$.
\end{proof}

\begin{proof} [\bf Proof of Corollary \ref{period}]
By the same way as in the proof of Theorem \ref{trace}, Corollary \ref{psilimit} implies that
\[
\lim_{t \to 0} \left( \psi(\hat{H}(f))_{\gamma^{-1}}(r+it) + \frac{2c_r\sqrt{1+ct}}{ct} \right) = - \overline{L^{\reg}_r\left(W(f), \frac12\right)}+c_r.
\]
This is the desired result.
\end{proof}

\section{Eichler-Shimura cohomology for weakly holomorphic modular forms} \label{reviewcohomology}

Knopp \cite{Kno} used holomorphic functions on the complex upper half plane $\HH$ to study Eichler-Shimura cohomology for real weights.
In \cite{BCD}, Bruggeman, Choie, and Diamantis worked with functions on the complex lower half plane $\HH^-$ instead of $\HH$ to establish the Eichler-Shimura cohomology theory for holomorphic automorphic forms.
This approach avoided complex conjugation when dealing with period functions and led to a $\CC$-linear map from modular forms to cohomology classes.
As remarked in \cite{BCD}, these two methods in \cite{Kno} and \cite{BCD} are equivalent by the involution $\iota$ in (1.6) and (1.12) in \cite{BCD}.
In this section, we review the Eichler-Shimura cohomology theory for weakly holomorphic modular forms by following Bruggeman, Choie, and Diamantis \cite{BCD}. And then, by using a  result in \cite{BCD}, we prove that there is no nonzero harmonic weak Maasss form of half integral weight such that its holomorphic part is zero.

Now, we introduce some notions in \cite{BCD}.
 A set $\Omega\subset \mathbb{P}^1_{\CC}$ is an \emph{excised neighborhood} of $\HH^-\cup \mathbb{P}^1_{\RR}$, if it contains a set of the form
\begin{equation*}
U \setminus \bigcup_{\xi\in E} W_{\xi},
\end{equation*}
where $U$ is a standard neighborhood of $\HH^-\cup \mathbb{P}^1_{\RR}$ in $\mathbb{P}^1_{\CC}$.
Here, $E$ is a finite subset of $\mathbb{P}^1_{\RR}$, called the \emph{excised set}, and $W_{\xi}$ has the form
\[
W_{\xi} = \{ h_{\xi}z \in \HH\ |\ |\mathrm{Re}(z)| \leq a\ \text{and}\ \mathrm{Im}(z)>\epsilon\},
\]
with $h_{\xi}\in \SL_2(\RR)$ such that $h_{\xi}\infty = \xi$, and $a, \epsilon>0$.
Instead of excised neighborhood of $\HH^-\cup \mathbb{P}^1_{\RR}$ with excised set E, we shall write \emph{$E$-excised neighborhood}.

For $k\in\frac12\ZZ$ and a function $f$, we define the function $\mathrm{prj}_{2-k}f$ by
\[
(\mathrm{prj}_{2-k}f)(z) := (i-z)^{2-k}f(z).
\]
For $\xi_1, \cdots, \xi_n\in \mathbb{P}^1_{\RR}$, we define spaces of \emph{excised semi-analytic vector}
\[
\mathcal{D}^{\omega, \mathrm{exc}}_{2-k}[\xi_1, \cdots, \xi_n] := \mathrm{prj}^{-1}_{2-k}\lim_{\longrightarrow} \mathcal{O}(\Omega),
\]
where $\Omega$ runs over the $\{\xi_1, \cdots, \xi_n\}$-excised neighborhoods and $\mathcal{O}$ denotes the sheaf of holomorphic functions on $\mathbb{P}^1_{\CC}$.
We define
\[
\mathcal{D}^{\omega^0, \mathrm{exc}}_{2-k} := \lim_{\longrightarrow} \mathcal{D}^{\omega, \mathrm{exc}}_{2-k}[a_1, \cdots, a_n],
\]
where $\{a_1, \cdots, a_n\}$ runs over the finite sets of cusps of $\Gamma_0(4N)$.
Here, for $f\in \mathcal{D}^{\omega^0, \mathrm{exc}}_{2-k},\ z\in \HH^-$ and $\gamma \in \Gamma_0(4N)$, the group $\Gamma_0(4N)$ acts on $\mathcal{D}^{\omega^0, \mathrm{exc}}_{2-k}$ by
\[
(f|^-_{2-k}\gamma)(z) := \chi^{-2k}_{\theta}(\gamma) (cz+d)^{k-2} f(\gamma z).
\]

We recall the basic definitions of group cohomology (see Section 1.4 in \cite{BCD}).
Let $V$ be a right $\CC[\Gamma]$-module.
A {\it $1$-cocycle} is a map $\psi: \Gamma_0(4N) \to V$ such that
\[
\psi(\gamma \delta) = \psi(\gamma)\cdot \delta + \psi(\delta)
\]
for all $\gamma, \delta\in \Gamma_0(4N)$.
A {\it $1$-coboundary} is a map $\psi: \Gamma_0(4N) \to V$ such that
\[
\psi(\gamma) = a\cdot \gamma - a
\]
for some $a\in V$ not depending on $\gamma$.

Let $Z^1(\Gamma_0(4N); V)$ be the space of $1$-cocycles, and $B^1(\Gamma_0(4N); V) \subset Z^1(\Gamma_0(4N); V)$ the space of $1$-coboundaries.
The \emph{parabolic cohomology group} $H^1_{\mathrm{pb}}(\Gamma_0(4N); V)$ is
\[
H^1_{\mathrm{pb}}(\Gamma_0(4N); V) := Z^1_{\mathrm{pb}}(\Gamma_0(4N);V)/ B^1(\Gamma_0(4N); V),
\]
where
\begin{eqnarray*}
Z^1_{\mathrm{pb}}(\Gamma_0(4N); V) &:=& \biggl\{ \psi\in Z^1(\Gamma_0(4N); V)\ \bigg|\ \psi(\pi) \in V \cdot(\pi-1)\ \text{for all parabolic $\pi\in \Gamma_0(4N)$}\biggr\}.
\end{eqnarray*}

For a fixed $z_0\in \HH$ and $f\in M^!_{k}(\Gamma_0(4N))$, consider the map
$\phi^{z_0}_f : \Gamma_0(4N) \to \mathcal{D}^{\omega^0, \mathrm{exc}}_{2-k}$ defined by
\[
\phi^{z_0}_f(\gamma) := E(f)|^-_{2-k}\gamma - E(f),
\]
where $E(f)(z) := \int^{\bar{z}}_{z_0} f(\tau)(\tau-z)^{k-2}d\tau$.
Note that
\[
\phi^{z_0}_f(\gamma)(z) =\int^{z_0}_{\gamma^{-1}z_0} f(\tau)(\tau-z)^{k-2}d\tau
\]
(see Lemma 5.1 in \cite{BCD}).
This gives the following injective linear map.

\begin{thm} \cite[Theorem E]{BCD} \label{BCDinjective}
If $k$ is not an integer, then the assignment $\phi^{z_0}_f$ is a $1$-cocycle, and $f \mapsto \phi^{z_0}_f$ induces an injective linear map
\[
r_k : M^!_{k}(\Gamma_0(4N)) \to H^1_{\mathrm{pb}}(\Gamma_0(4N); \mathcal{D}^{\omega^0, \mathrm{exc}}_{2-k}).
\]
\end{thm}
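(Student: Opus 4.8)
The plan is to establish the statement in four steps: the cocycle identity (formal), parabolicity, independence of the base point $z_0$ (together yielding a well-defined map into $H^1_{\mathrm{pb}}$), and finally injectivity, which is the crux. For the cocycle relation I would observe that $\phi^{z_0}_f$ is literally the coboundary of the $0$-cochain $E(f)$. Since $\chi_\theta^{2k}$ is a multiplier system, $|^-_{2-k}$ is a genuine right action on functions on $\HH^-$ (the consistency condition already invoked in Lemma \ref{transform}), so writing $\phi^{z_0}_f(\gamma) = E(f)|^-_{2-k}\gamma - E(f)$ gives
\[
\phi^{z_0}_f(\gamma\delta) = \bigl(E(f)|^-_{2-k}\gamma - E(f)\bigr)|^-_{2-k}\delta + \bigl(E(f)|^-_{2-k}\delta - E(f)\bigr) = \phi^{z_0}_f(\gamma)|^-_{2-k}\delta + \phi^{z_0}_f(\delta),
\]
which is exactly the $1$-cocycle identity. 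The only point needing care is that $\phi^{z_0}_f(\gamma)$ actually lands in $\mathcal{D}^{\omega^0,\mathrm{exc}}_{2-k}$: here I would use the stated identity $\phi^{z_0}_f(\gamma)(z) = \int^{z_0}_{\gamma^{-1}z_0} f(\tau)(\tau-z)^{k-2}d\tau$, whose integration path is a compact arc in $\HH$ joining $z_0$ and $\gamma^{-1}z_0$. For $z$ off this arc the integral is a holomorphic function of $z$, and after fixing the branch of $(\tau-z)^{k-2}$ it is holomorphic on an excised neighborhood of $\HH^-\cup\mathbb{P}^1_\RR$, giving membership in the target module.

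Independence of $z_0$ and linearity are routine. Replacing $z_0$ by $z_1$ changes $E(f)$ by the compact-path integral $P(z) = \int^{z_1}_{z_0} f(\tau)(\tau-z)^{k-2}d\tau$, which lies in $\mathcal{D}^{\omega^0,\mathrm{exc}}_{2-k}$ by the same argument; hence $\phi^{z_1}_f - \phi^{z_0}_f = P|^-_{2-k}(\cdot) - P$ is a coboundary and the class $[\phi^{z_0}_f]$ is well defined, while $f\mapsto\phi^{z_0}_f$ is $\CC$-linear since the integral is. To land in the \emph{parabolic} cohomology I would check, for each parabolic $\pi$ fixing a cusp $\xi$, that $\phi^{z_0}_f(\pi)\in\mathcal{D}^{\omega^0,\mathrm{exc}}_{2-k}\cdot(\pi-1)$. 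Conjugating $\xi$ to $i\infty$ by a scaling matrix so that $\pi$ becomes a power of $T$ and using $\chi_\theta(T)=1$, I would evaluate the period $\int^{z_0}_{\pi^{-1}z_0}$ term by term against the Fourier expansion of $f$ at $\xi$: each non-constant Fourier term integrates to a function manifestly of the form $(\pi-1)$ applied to a semi-analytic vector, and the principal-part terms are accommodated precisely because the target allows excision at $\xi$. This is the step where the excised structure of $\mathcal{D}^{\omega^0,\mathrm{exc}}_{2-k}$ is essential.

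The heart of the theorem, and the step I expect to be hardest, is injectivity. Suppose $[\phi^{z_0}_f]=0$, so that there is $a\in\mathcal{D}^{\omega^0,\mathrm{exc}}_{2-k}$ with $E(f)|^-_{2-k}\gamma - E(f) = a|^-_{2-k}\gamma - a$ for all $\gamma\in\Gamma_0(4N)$; then $G := E(f)-a$ is $|^-_{2-k}$-invariant. The key computation is that $a$, being a semi-analytic vector, is holomorphic in $z$ on $\HH^-$, so $\partial_{\bar z}a = 0$, whereas differentiating the defining integral of $E(f)$ through its upper limit $\bar z$ gives $\partial_{\bar z}E(f)(z) = f(\bar z)(\bar z-z)^{k-2}$; hence
\[
\partial_{\bar z}G(z) = f(\bar z)(\bar z-z)^{k-2}.
\]
I would then exploit the extra regularity of $a$: as a semi-analytic vector it extends holomorphically across $\mathbb{P}^1_\RR$ away from finitely many excised cusps, so $G$ inherits, near each cusp, exactly the boundary singularity of the Eichler integral $E(f)$, which is dictated by the Fourier and principal-part data of $f$ there. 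The $\Gamma_0(4N)$-invariance of $G$ forces these cusp singularities to be mutually compatible.

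This is precisely where the hypothesis $k\notin\ZZ$ enters. For non-integral weight the kernel $(\tau-z)^{k-2}$ is not a polynomial, so there is no finite-dimensional space of invariant ``period polynomial'' obstructions of the kind that produce a nontrivial kernel in the classical integer-weight Eichler--Shimura map. Consequently the only way an invariant $G$ satisfying $\partial_{\bar z}G(z)=f(\bar z)(\bar z-z)^{k-2}$ can have all of its cusp singularities cancelled by a single holomorphic $a$ is for every Fourier coefficient of $f$ at every cusp to vanish. Concretely, I would recover these coefficients from the jumps of $G$ (equivalently, from the periods $\phi^{z_0}_f(\gamma)$) across the excised arcs and conclude $f\equiv 0$, which gives injectivity of $r_k$. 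The main obstacle throughout is this last reconstruction: controlling the interaction between the globally holomorphic representative $a$ and the multivalued boundary behavior of $E(f)$, and turning triviality of the cohomology class into the vanishing of each coefficient of $f$.
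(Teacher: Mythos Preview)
The paper does not give a proof of this theorem: it is quoted verbatim as \cite[Theorem E]{BCD} and used as a black box in the proof of Theorem~\ref{nonexistence}. There is therefore nothing in the paper to compare your proposal against.

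That said, your outline follows the standard architecture of the Eichler--Shimura type argument in \cite{BCD}. The cocycle identity, independence of $z_0$, and the parabolicity check are correctly identified and your sketches are sound. The genuine difficulty, as you yourself flag, is the injectivity step: your argument that a semi-analytic $a$ with $E(f)-a$ invariant forces $f=0$ is only heuristic. Knowing $\partial_{\bar z}G(z)=f(\bar z)(\bar z-z)^{k-2}$ and that $a$ extends holomorphically across $\mathbb{P}^1_\RR$ away from finitely many cusps does not by itself let you ``read off'' the Fourier coefficients of $f$ from jumps of $G$; one needs a careful asymptotic analysis of $E(f)$ near each cusp and a splitting result showing that the semi-analytic module cannot absorb the specific singularities produced by a nonzero $f$ when $k\notin\ZZ$. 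In \cite{BCD} this is handled via an explicit description of the highest-weight structure of $\mathcal{D}^{\omega^0,\mathrm{exc}}_{2-k}$ and its interaction with the principal series, not by a direct coefficient-recovery argument. Your final paragraph gestures at the right phenomenon (absence of a polynomial period space for non-integral $k$) but does not constitute a proof; to complete it you would need to import or reproduce the structural results of \cite[\S5--\S6]{BCD}.
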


The following lemma shows that the regularized integral $\int^{\mathrm{reg}}_{[z_0, i\infty]} f(\tau)(\tau-z)^{k-2}d\tau$  is an element of $\mathcal{D}^{\omega^0, \mathrm{exc}}_{2-k}$.

\begin{lem} \label{coboundary}
The function $\int^{\reg}_{[z_0,i\infty]} f(\tau)(\tau-z)^{k-2}d\tau$ is an element of $\mathcal{D}^{\omega^0, \mathrm{exc}}_{2-k}$.
\end{lem}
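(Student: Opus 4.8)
The plan is to show that the function $G(z):=\int^{\reg}_{[z_0,i\infty]} f(\tau)(\tau-z)^{k-2}d\tau$, regarded as a function of the variable $z$, is holomorphic on a neighborhood of $\HH^-\cup(\mathbb{P}^1_{\RR}\setminus\{i\infty\})$, and then to excise the single cusp $i\infty$. Membership in $\mathcal{D}^{\omega^0,\mathrm{exc}}_{2-k}$ means exactly that $\mathrm{prj}_{2-k}G(z)=(i-z)^{2-k}G(z)$ lies in $\lim_{\longrightarrow}\mathcal{O}(\Omega)$ for $\Omega$ running over $\{a_1,\dots,a_n\}$-excised neighborhoods with $\{a_j\}$ a finite set of cusps of $\Gamma_0(4N)$. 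Since $(i-z)^{2-k}$ is holomorphic and nonvanishing on a neighborhood of $\HH^-\cup(\mathbb{P}^1_{\RR}\setminus\{i\infty\})$ (its branch cut lies in $\HH$, away from this region), it suffices to prove that $G$ itself is holomorphic there and to take the finite excised set $E=\{i\infty\}$, which is legitimate because $\mathcal{D}^{\omega^0,\mathrm{exc}}_{2-k}$ is a direct limit over finite sets of cusps.

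First I would produce an explicit series for $G$. Setting $z_2=\bar z$, $z_1=z_0$ and $s=k-2$ in Lemma~\ref{generalpoint}, so that $(\tau-\overline{z_2})=(\tau-z)$, the regularized Eichler integral is represented by the series displayed there (here $s=k-2\neq-1$, since $k$ is not an integer), valid whenever $\mathrm{Im}(z_0)-\mathrm{Im}(z)>0$, i.e. on the slab $S:=\{z\mid \mathrm{Im}(z)<\mathrm{Im}(z_0)\}$. This slab contains $\HH^-$, every finite point of $\mathbb{P}^1_{\RR}$, and a genuine one-sided neighborhood of the real axis inside $\HH$ of height up to $\mathrm{Im}(z_0)$. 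Writing the series as $G(z)=\sum_{n\neq 0}c_n R_n(z)+(\text{constant-term contribution})$, where $R_n(z)=\int^{\reg}_{[z_0,i\infty]}e^{2\pi in\tau}(\tau-z)^{k-2}d\tau$ and $c_n$ is a multiple of $b(n)$, each $R_n$ is holomorphic in $z$ on $S$: for $n>0$ the integral converges absolutely and depends holomorphically on $z$, while for the finitely many $n\le 0$ this holds by the construction of the regularized integral, which analytically continues a family depending holomorphically on $z$.

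Next I would upgrade termwise holomorphy to holomorphy of the sum by establishing local uniform convergence on $S$, which is precisely the purpose of Lemma~\ref{uniform}. On a bounded subset $E\subset S$ one takes $b(t)=-2\pi\,\mathrm{Im}(z)$, $c(t)=2\pi(\mathrm{Re}(z)-\mathrm{Re}(z_0))$ and $\alpha=2\pi\,\mathrm{Im}(z_0)>0$, so that $b(t)+\alpha=2\pi(\mathrm{Im}(z_0)-\mathrm{Im}(z))$ matches the argument of $A$ appearing in the series; the coefficients satisfy $c_n=O(e^{C\sqrt{|n|}})$ by (\ref{coefficientgrowth}), and $b,c$ are bounded continuous on $E$. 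Lemma~\ref{uniform}, which rests on the asymptotics of $A(x,y,s)$ from Lemma~\ref{gammaasymptotic}, then yields uniform convergence on $E$, and Weierstrass' theorem shows $G$ is holomorphic on $S$. Multiplying by $(i-z)^{2-k}$ exhibits $\mathrm{prj}_{2-k}G$ as an element of $\lim_{\longrightarrow}\mathcal{O}(\Omega)$ for $\{i\infty\}$-excised neighborhoods $\Omega$, as desired.

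The main obstacle is the behavior along and across $\mathbb{P}^1_{\RR}$ together with the singularity at $i\infty$. As $\mathrm{Im}(z)\to 0$ the terms of the Eichler-type series are controlled by the generalized incomplete gamma function $A$, and only the uniform asymptotic control from Lemmas~\ref{gammaasymptotic} and~\ref{uniform} permits passage to the real boundary and slightly beyond it. A second, subtler point is that the slab $S$ does not exhaust the excised neighborhood $U\setminus W_{i\infty}$: there are points of large imaginary part lying near $i\infty$ but outside $W_{i\infty}$ at which the single series diverges. There I would instead continue $G$ by contour deformation, using that the regularized integral from $z_0$ to $i\infty$ extends holomorphically in $z$ to $\mathbb{P}^1_{\CC}$ minus a cut joining its only branch points $z_0$ and $i\infty$ (Cauchy's theorem applies to the convergent part of $f$, and the finitely many principal-part terms are handled individually). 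Choosing $z_0$ on the imaginary axis places this cut inside $W_{i\infty}$, so that $G$ is holomorphic on all of $U\setminus W_{i\infty}$, which completes the verification that $G\in\mathcal{D}^{\omega^0,\mathrm{exc}}_{2-k}$.
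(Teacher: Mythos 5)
Your proposal is correct and takes essentially the same route as the paper's own proof: the series representation of Lemma~\ref{generalpoint} on the slab $\mathrm{Im}(z)<\mathrm{Im}(z_0)$, holomorphy there via the uniform convergence supplied by Lemmas~\ref{gammaasymptotic} and~\ref{uniform} (the paper carries out your ``termwise holomorphy plus Weierstrass'' step explicitly, by differentiating the series termwise and using the integration-by-parts recursion for $A(x,y,s)$ to check that $\partial/\partial\bar z$ annihilates it), and continuation past the slab by contour deformation (the paper moves the basepoint to a higher $z_1$ along paths avoiding the ray $\{z-t:\ t\ge0\}$, which is your movable cut from $z_0$ to $i\infty$). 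One cosmetic remark: since the wedge parameters $a,\epsilon$ in $W_{i\infty}$ are arbitrary positive numbers, the cut for an arbitrary fixed $z_0$ already lies inside a suitable $W_{i\infty}$, so your normalization of $z_0$ to the imaginary axis is unnecessary (and is best avoided, as $z_0$ is fixed beforehand in the definition of $\phi^{z_0}_f$).
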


\begin{proof} [\bf Proof of Lemma \ref{coboundary}]
Note that the regularized integral
\begin{equation} \label{integralconnection}
\int^{\reg}_{[z_0,i\infty]} f(\tau)(\tau-z)^{k-2}d\tau
\end{equation}
is well defined for $z\in\CC$  if $\mathrm{Im}(z_0)> \mathrm{Im}(z)$ by Lemma \ref{generalpoint}.
Now, we extend this integral to $\CC$ as a function of $z$ as follows.

We fix $z_1\in\HH$ such that $\mathrm{Im}(z_1)\geq\mathrm{Im}(z_0)$.
Let $\gamma$ be a path in $\HH$ joining $z_0$ and $z_1$.
We consider the following function $J_\gamma(z_0, z_1)(z)$ defined by
\begin{equation} \label{extension}
J_\gamma(z_0, z_1)(z) := \int_{z_0}^{z_1} f(\tau)(\tau-z)^{k-2}d\tau
\end{equation}
for $z\in \CC$ such that $\mathrm{Im}(z) < \mathrm{Im}(z_1)$, where we take the path $\gamma$.
Then the integral in (\ref{extension}) is well defined if the path $\gamma$ does not pass through the line $\{z-t\ |\ t\geq0\}$ since we use the convention that $z = |z|e^{i\mathrm{arg}(z)},\ -\pi< \mathrm{arg}(z) \leq \pi$.
Let $D(z_0, z_1, \gamma)$ be the set of complex numbers $z$ such that the integral in (\ref{extension}) is well defined at $z$.
Then, $J_{\gamma}(z_0, z_1)(z)$ is a holomorphic function on $D(z_0, z_1, \gamma)$ as a function of $z$.
The following sum
\begin{equation} \label{extension2}
H_{z_1, \gamma}(z) := J_\gamma(z_0, z_1)(z) + \int^{\mathrm{reg}}_{[z_1, i\infty]} f(\tau)(\tau-z)^{k-2}d\tau,
\end{equation}
is well defined and induces a function of $z$ if  $\mathrm{Im}(z) < \mathrm{Im}(z_1)$ and $z\in D(z_0, z_1, \gamma)$.
This function is equal to the function in (\ref{integralconnection}) as a function of $z$ if $\mathrm{Im}(z_0) > \mathrm{Im}(z)$.
If $H_{z_1, \gamma}(z)$ and $H_{z_1', \gamma'}(z)$ are well defined, then we have $H_{z_1, \gamma}(z) = H_{z_1', \gamma'}(z)$ by the same argument as in the proof of Lemma \ref{independent}.
For a given $z\in\CC$, we can find $z_1\in \HH$ and $\gamma$ such that $\mathrm{Im}(z) < \mathrm{Im}(z_1)$ and $z\in D(z_0, z_1, \gamma)$.
Therefore, we have an extension of the function in (\ref{integralconnection}) to $\CC$ as a function of $z$.
Now, it is enough to show that $\int^{\mathrm{reg}}_{[z_1, i\infty]} f(\tau)(\tau-z)^{k-2}d\tau$ is holomorphic for $z$ with $\mathrm{Im}(z)<\mathrm{Im}(z_1)$
since the function $J_\gamma(z_0, z_1)(z)$  is holomorphic as a function of $z$ on $D(z_0, z_1,\gamma)$.

Suppose that $\mathrm{Im}(z)<\mathrm{Im}(z_0)$.
We will show that the function in (\ref{integralconnection}) is holomorphic at $z$.
If $f$ has a Fourier expansion  as in (\ref{Fourierinfinite}), then, by Lemma \ref{generalpoint}, (\ref{integralconnection}) is equal to
\begin{eqnarray*}
&& \left(\frac{i}{2\pi}\right)^{k-1} \sum_{n\gg-\infty\atop n\neq0} \frac{a(n)}{n^{k-1}} e^{2\pi inx_0}e^{-2\pi ny} A(2\pi n(y_0-y), 2\pi n(x-x_0), k-2)\\
&&-a(0)\frac{i^{k-1}(y_0-y + i(x-x_0))^{k-1}}{k-1},
\end{eqnarray*}
where $z_0 = x_0 + iy_0$ and $z = x + iy$.
Since $-y + ix = iz$, we see that
\[
-a(0)\frac{i^{k-1}(y_0-y + i(x-x_0))^{k-1}}{k-1} = -a(0)\frac{i^{k-1}(y_0-ix_0 +iz)^{k-1}}{k-1}
\]
is holomorphic as a function of $z$ if $\mathrm{Im}(z)<\mathrm{Im}(z_0)$.

Note that
we have
\begin{eqnarray*} \label{partial1}
&&\frac{\partial}{\partial x} \left( \frac{a(n)}{n^{k-1}} e^{2\pi inx_0}e^{-2\pi ny} A(2\pi n(y_0-y), 2\pi n(x-x_0), k-2) \right)\\
&&= \frac{a(n)}{n^{k-1}} e^{2\pi inx_0}e^{-2\pi ny} 2\pi n(k-2)iA(2\pi n(y_0-y), 2\pi n(x-x_0), k-3).
\end{eqnarray*}
Since the series
\[
\sum_{n=1}^\infty \frac{a(n)}{n^{k-1}} e^{2\pi inx_0}e^{-2\pi ny} 2\pi n(k-2)iA(2\pi n(y_0-y), 2\pi n(x-x_0), k-3)
\]
converges uniformly  by Lemma \ref{uniform}, we obtain
\begin{eqnarray*}
&&\frac{\partial}{\partial x}\left( \left(\frac{i}{2\pi}\right)^{k-1} \sum_{n\gg-\infty\atop n\neq0} \frac{a(n)}{n^{k-1}} e^{2\pi inx_0}e^{-2\pi ny} A(2\pi n(y_0-y), 2\pi n(x-x_0), k-2) \right) \\
&& = \left(\frac{i}{2\pi}\right)^{k-1} \sum_{n\gg-\infty\atop n\neq 0} \frac{a(n)}{n^{k-1}} e^{2\pi inx_0}e^{-2\pi ny} 2\pi n(k-2)iA(2\pi n(y_0-y), 2\pi n(x-x_0), k-3).
\end{eqnarray*}
By the same way, we see that
\begin{eqnarray*}
&&\frac{\partial}{\partial y}\left( \left(\frac{i}{2\pi}\right)^{k-1} \sum_{n\gg-\infty\atop n\neq0} \frac{a(n)}{n^{k-1}} e^{2\pi inx_0}e^{-2\pi ny} A(2\pi n(y_0-y), 2\pi n(x-x_0), k-2) \right) \\
&&= \left(\frac{i}{2\pi}\right)^{k-1} \sum_{n\gg-\infty\atop n\neq 0}
\frac{a(n)}{n^{k-1}} e^{2\pi inx_0}\biggl[ -2\pi ne^{-2\pi ny} A(2\pi n(y_0-y), 2\pi n(x-x_0), k-2)\\
&& +2\pi n e^{-2\pi ny_0}(2\pi n(y_0-y)+i2\pi n(x-x_0))^{k-2}\biggr].
\end{eqnarray*}
If we use the integration by parts, then we have
\begin{eqnarray*}
&&A(2\pi n(y_0-y), 2\pi n(x-x_0), k-2)\\
 &&= e^{-2\pi n(y_0-y)}(2\pi n(y_0-y)+i2\pi n(x-x_0))^{k-2}
 + (k-2)A(2\pi n(y_0-y), 2\pi n(x-x_0), k-3).
\end{eqnarray*}
From this, we obtain
\begin{eqnarray*}
\frac12\left(\frac{\partial}{\partial x} + i\frac{\partial}{\partial y}\right) \left(\left(\frac{i}{2\pi}\right)^{k-1} \sum_{n\gg-\infty\atop n\neq0} \frac{a(n)}{n^{k-1}} e^{2\pi inx_0}e^{-2\pi ny} A(2\pi n(y_0-y), 2\pi n(x-x_0), k-2) \right) = 0.
\end{eqnarray*}
Since $\frac{\partial}{\partial\bar{z}} = \frac12\left(\frac{\partial}{\partial x} + i\frac{\partial}{\partial y}\right)$, this completes the proof.
\end{proof}

\begin{rmk}
One can prove Lemma \ref{coboundary} by using the approach that Bruggeman, Choie, and Diamantis followed in \cite{BCD} with explicit manipulations of contour integrals.
\end{rmk}

Theorem C of \cite{BCD} provides a method to study harmonic weak Maass forms.
The following theorem is an example of the application of Theorem E in \cite{BCD}.

\begin{thm} \label{nonexistence}
Suppose that $k\in \frac12 + \ZZ$.
Then, there is no  nonzero harmonic weak Maass form $g\in H_{2-k}(\Gamma_0(4N))$ such that $g^+ = 0$.
\end{thm}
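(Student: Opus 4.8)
The plan is to reduce the statement to the injectivity of the Eichler--Shimura map $r_k$ from Theorem \ref{BCDinjective}, by proving that the cohomology class attached to the shadow $f:=\xi_{2-k}(g)\in M^!_{k}(\Gamma_0(4N))$ is trivial. Write $g=g^++g^-$; by hypothesis $g^+=0$, so $g=g^-$. Note that $k\in\frac12+\ZZ$ guarantees both $k\notin\ZZ$ (needed for Theorem \ref{BCDinjective}) and $k\neq1$ (needed for Lemma \ref{periodfunction} via Theorem \ref{maintheoremsection3}).

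First I would exploit the modularity of $g$. Since $g=g|_{2-k}\gamma$ for every $\gamma\in\Gamma_0(4N)$ and $g^+=0$ (so that $g^+|_{2-k}\gamma=0$ identically), the function identity $g^++g^-=g^+|_{2-k}\gamma+g^-|_{2-k}\gamma$ forces $g^-=g^-|_{2-k}\gamma$ for all $\gamma$. Applying Lemma \ref{periodfunction} to $g\in H_{2-k}(\Gamma_0(4N))$ with shadow $f=\xi_{2-k}(g)$ then gives
\[
0=(g^- - g^-|_{2-k}\gamma)(z)=-\frac{1}{(-2i)^{k-1}}\,\overline{\int^{\reg}_{[\gamma^{-1}(i\infty),i\infty]} f(\tau)(\tau-\bar{z})^{k-2}\,d\tau},
\]
so that the regularized cusp-to-cusp Eichler integral
\[
c_f(\gamma)(w):=\int^{\reg}_{[\gamma^{-1}(i\infty),i\infty]} f(\tau)(\tau-w)^{k-2}\,d\tau
\]
vanishes identically for every $\gamma\in\Gamma_0(4N)$ and every $w=\bar{z}\in\HH^-$.

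The heart of the argument is to identify $c_f$ with the Eichler--Shimura cocycle up to a coboundary. Fix $z_0\in\HH$ and set $a(z):=\int^{\reg}_{[z_0,i\infty]} f(\tau)(\tau-z)^{k-2}\,d\tau$, which lies in $\mathcal{D}^{\omega^0,\mathrm{exc}}_{2-k}$ by Lemma \ref{coboundary}. Splitting $c_f(\gamma)$ through $z_0$ via (\ref{twocusp}), rewriting $\int^{\reg}_{[z_0,\gamma^{-1}(i\infty)]}$ by Lemma \ref{welldefinedforcusp}(1), peeling off the finite honest integral $\int_{\gamma z_0}^{z_0}$ (which differs from $a$ only by a convergent integral, as in the proof of Lemma \ref{independent}), and undoing the substitution $\tau=\gamma u$ with the help of (\ref{determinerho}) and the consistency of the multiplier system $\chi_\theta^{2k}$ exactly as in Lemma \ref{transform}, I expect the bookkeeping to collapse to the clean identity
\[
\phi^{z_0}_f(\gamma)=c_f(\gamma)+\bigl(a|^-_{2-k}\gamma-a\bigr),
\]
where $\phi^{z_0}_f(\gamma)(w)=\int_{\gamma^{-1}z_0}^{z_0} f(\tau)(\tau-w)^{k-2}\,d\tau$ is the cocycle of Theorem \ref{BCDinjective}. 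Since $c_f\equiv 0$ by the previous step, this exhibits $\phi^{z_0}_f$ as the coboundary of $a\in\mathcal{D}^{\omega^0,\mathrm{exc}}_{2-k}$, hence $r_k(f)=[\phi^{z_0}_f]=0$ in $H^1_{\mathrm{pb}}(\Gamma_0(4N);\mathcal{D}^{\omega^0,\mathrm{exc}}_{2-k})$.

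Finally, because $k$ is not an integer, Theorem \ref{BCDinjective} applies and injectivity of $r_k$ yields $f=\xi_{2-k}(g)=0$. Thus $g$ lies in $\ker\xi_{2-k}=M^!_{2-k}(\Gamma_0(4N))$, so $g^-=0$; together with $g^+=0$ this gives $g=0$, as desired. The main obstacle is the third paragraph: the transformation-law bookkeeping that reduces the regularized cusp-to-cusp integral $c_f(\gamma)$ to $\phi^{z_0}_f(\gamma)$ modulo the coboundary of $a$, where one must carefully track the half-integral multiplier $\chi_\theta^{2k}$ (and $\chi_\theta(\gamma^{-1})=\chi_\theta(\gamma)^{-1}$ up to the automorphy factor), the powers of $(cw+d)$ produced by (\ref{determinerho}), and the passage between regularized and honest integrals, so that each term provably lands in $\mathcal{D}^{\omega^0,\mathrm{exc}}_{2-k}$ and the coboundary structure is genuine.
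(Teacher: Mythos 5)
Your proposal is correct and follows essentially the same route as the paper: both arguments deduce $g^-|_{2-k}\gamma = g^-$ from $g^+=0$, show via the regularized Eichler integrals that the cocycle $\phi^{z_0}_{\xi_{2-k}(g)}$ is the coboundary of $a(z)=\int^{\reg}_{[z_0,i\infty]}\xi_{2-k}(g)(\tau)(\tau-z)^{k-2}d\tau \in \mathcal{D}^{\omega^0,\mathrm{exc}}_{2-k}$ (Lemma \ref{coboundary}), and conclude $\xi_{2-k}(g)=0$ from the injectivity in Theorem \ref{BCDinjective}. The only cosmetic difference is organizational: your cocycle identity $\phi^{z_0}_f(\gamma)=c_f(\gamma)+(a|^-_{2-k}\gamma-a)$, with $c_f(\gamma)\equiv 0$ supplied by Lemma \ref{periodfunction}, is exactly the paper's splitting $E(f)=I_2-I_1$ with the $\Gamma$-invariance of $I_1=-(2i)^{k-1}\overline{g^-(\bar z)}$, so the ``main obstacle'' you flag in your third paragraph is precisely the bookkeeping the paper has already packaged into Lemmas \ref{welldefinedforcusp}, \ref{independent}, and \ref{periodfunction}.
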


\begin{proof} [\bf Proof of Theorem \ref{nonexistence}]
Suppose that  $g\in H_{2-k}(\Gamma_0(4N))$ is a harmonic weak Maass form such that $g^+ = 0$.
Note that by (\ref{nonholomorphicpart}) we have
\[
\int^{\reg}_{[\bar{z},i\infty]} \xi_{2-k}(g)(\tau)(\tau-z)^{k-2}d\tau
 = -(2i)^{k-1}\overline{g^-(\overline{z})}
\]
for $z\in\HH^-$.
Since $g^+ = 0$, we obtain
\[
(g^-|_{2-k}\gamma - g^-)(z)  = 0
\]
for any $\gamma \in \Gamma_0(4N)$ and $z\in\HH$.
Hence, we have
\begin{equation} \label{nonexistence2}
(\overline{g^-}|^-_{2-k}\gamma - \overline{g^-})(\overline{z}) = 0
\end{equation}
for $z\in\HH^-$.

Note that
\begin{equation} \label{integralrelation}
\int^{\reg}_{[\bar{z},i\infty]} \xi_{2-k}(g)(\tau)(\tau-z)^{k-2}d\tau = \int^{\reg}_{[z_0,i\infty]} \xi_{2-k}(g)(\tau)(\tau-z)^{k-2}d\tau + \int^{z_0}_{\bar{z}} \xi_{2-k}(g)(\tau)(\tau-z)^{k-2}d\tau
\end{equation}
for $z\in \HH^-$.
We denote by $I_1(z)$ (resp. $I_2(z)$) the integral in the left hand side of (\ref{integralrelation}) (resp. the first  integral in the right hand side of (\ref{integralrelation})).
From (\ref{nonexistence2}) and (\ref{integralrelation}), we obtain
\begin{eqnarray*}
\phi^{z_0}_{\xi_{2-k}(g)}(\gamma) &=&  \left[I_2|^-_{2-k}\gamma - I_2\right] -  \left[I_1|^-_{2-k}\gamma - I_1\right]= \left[I_2|^-_{2-k}\gamma - I_2\right] .
\end{eqnarray*}
By Lemma \ref{coboundary}, $I_2$ is an element of $\mathcal{D}^{\omega^0, \mathrm{exc}}_{2-k}$, and hence $\phi^{z_0}_{\xi_{2-k}(g)}$ is a $1$-coboundary.
By Theorem \ref{BCDinjective}, we have $r_{k}(\xi_{2-k}(g)) = 0$, which implies that $\xi_{2-k}(g) = 0$ since $r_{k}$ is injective.
Therefore, we have $g=0$ since $\xi_{2-k}$ is also injective.
\end{proof}

\section{Proof of Theorem \ref{determine}} \label{proofs}

In this section, 
we prove Theorem \ref{determine}. The following lemma is needed to prove Theorem \ref{determine},.

\begin{lem} \label{Heckeoperator}
Let $g \in H_{2-k}(\Gamma_0(4N))$. Let $h(z) = \frac{1}{M^2}g(M^2z)$ for $M\in\NN$.
Then, we have
\[
Q_h(r) = \frac{1}{M^{2k-2}} Q_{g}(M^2r)
\]
for a rational number $r$ equivalent to $i\infty$ under the action of $\Gamma_0(4NM^2)$.
\end{lem}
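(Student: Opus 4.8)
The plan is to evaluate both $Q_h(r)$ and $Q_g(M^2r)$ through the closed formula of Theorem \ref{mocklimit}, which writes $Q_g(r)$ as $\frac{\Gamma(k-1)}{(4\pi)^{k-1}}\overline{L^{\reg}_r(\xi_{2-k}(g),k-1)}$ plus the constant-term correction $\frac{(-i)^k\chi^{-2k}_{\theta}(\gamma)}{(2c)^{k-1}}\overline{b(0)}$, and then to reduce the $h$-data to the $g$-data by the change of variables $\tau\mapsto M^2\tau$. First I would record the two structural identities linking $h$ to $g$. Since $g^+$ is holomorphic and $g^-$ is the nonholomorphic part, the prefactor together with the substitution $z\mapsto M^2z$ gives $h^+(z)=\frac{1}{M^2}g^+(M^2z)$; applying $\xi_{2-k}$ and using $\mathrm{Im}(M^2z)=M^2\,\mathrm{Im}(z)$ in a direct Wirtinger-derivative computation gives $\xi_{2-k}(h)(z)=M^{2k-4}\,\xi_{2-k}(g)(M^2z)$. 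In particular, writing $F=\xi_{2-k}(g)$, the constant term of $\xi_{2-k}(h)$ at $i\infty$ equals $M^{2k-4}$ times that of $F$.

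Next I would set up the cusp bookkeeping. Given $\gamma=\sm a&b\\c&d\esm\in\Gamma_0(4NM^2)$ with $\gamma(i\infty)=r$ and $c>0$, put $\gamma'=\sm a&M^2b\\c/M^2&d\esm\in\Gamma_0(4N)$, so that $\gamma'(i\infty)=M^2r$ and, by the relations already verified in the proof of Lemma \ref{Mtimes}, one has $\chi_{\theta}(\gamma)=\chi_{\theta}(\gamma')$ together with $\gamma'(M^2w)=M^2\gamma(w)$. The analytic heart of the argument is the scaling of the regularized $L$-value under $F\mapsto F(M^2\cdot)$: using the integral representation of Lemma \ref{Lseries} (2) and the change of variables $\tau\mapsto M^2\tau$ that underlies Lemma \ref{Mtimes}, one finds $L^{\reg}_r\big(F(M^2\cdot),s\big)=M^{-2s}L^{\reg}_{M^2r}(F,s)$, and in particular $L^{\reg}_r\big(F(M^2\cdot),k-1\big)=M^{-(2k-2)}L^{\reg}_{M^2r}(F,k-1)$. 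Substituting these identities, together with $\xi_{2-k}(h)=M^{2k-4}F(M^2\cdot)$ and the constant-term scaling, into the Theorem \ref{mocklimit} formula for $Q_h(r)$ (at level $4NM^2$, cusp $\gamma$) and for $Q_g(M^2r)$ (at level $4N$, cusp $\gamma'$, with $c'=c/M^2$) reduces the claim to collecting the resulting powers of $M$.

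The main obstacle I anticipate is exactly this final bookkeeping: one must combine the factor $M^{2k-4}$ produced by $\xi_{2-k}(h)=M^{2k-4}F(M^2\cdot)$ (which already absorbs the prefactor $\tfrac1{M^2}$) with the change-of-variable factor $M^{-(2k-2)}$ from the scaling of the regularized $L$-value, and separately check that the two correction terms agree after the substitution $r\mapsto M^2r$, $c\mapsto c/M^2$, where the constant-term factor plays against the difference between $(2c)^{1-k}$ and $(2c/M^2)^{1-k}$; assembling these contributions yields the constant multiple relating $Q_h(r)$ to $Q_g(M^2r)$ asserted in the statement. As an independent cross-check I would redo the computation straight from the limit definition of $Q$: using $\gamma'(M^2w)=M^2\gamma(w)$ and the reparametrization $x\mapsto x'=M^2x$, $t\mapsto t'=M^2t$, the quantity $g^+\big(\gamma'(x'+\tfrac{1}{(c')^2t'}i)\big)$ is identified with $M^2\,h^+\big(\gamma(x+\tfrac{1}{c^2t}i)\big)$, after which the two correction terms cancel against one another and the limit defining $Q_h(r)$ collapses to the corresponding limit for $Q_g(M^2r)$, giving the relation directly.
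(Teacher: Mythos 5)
Your two structural identities are individually correct, but they do not assemble to the constant asserted in the statement, and the ``final bookkeeping'' you defer is precisely where the argument fails. The chain rule indeed gives $\xi_{2-k}(h)(z)=M^{2k-4}\,\xi_{2-k}(g)(M^2z)$ (test on $g=y^{k-1}$: one has $\xi_{2-k}(y^{k-1})=k-1$, while $h(z)=\frac{1}{M^2}(M^2y)^{k-1}=M^{2k-4}y^{k-1}$), and the change of variables underlying Lemma \ref{Mtimes} does give $L^{\reg}_r\bigl(F(M^2\cdot),s\bigr)=M^{-2s}L^{\reg}_{M^2r}(F,s)$. But at $s=k-1$ these combine to $M^{2k-4}\cdot M^{-(2k-2)}=M^{-2}$, and the correction terms scale the same way: $b_h(0)=M^{2k-4}\,b_g(0)$ while $(2c)^{1-k}=M^{2-2k}(2c')^{1-k}$ for $c'=c/M^2$, again a net factor $M^{-2}$. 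Your cross-check, carried out honestly, confirms this rather than the statement: from $\gamma'(M^2w)=M^2\gamma(w)$ one gets $g^+\bigl(\gamma'(x'+\frac{1}{(c')^2t'}i)\bigr)=M^2\,h^+\bigl(\gamma(x+\frac{1}{c^2t}i)\bigr)$, and each correction term of $Q_g(M^2r)$ is likewise $M^2$ times the corresponding correction term of $Q_h(r)$ (they match termwise; they do not cancel). So both of your routes prove $Q_h(r)=\frac{1}{M^2}\,Q_g(M^2r)$, which agrees with the asserted $\frac{1}{M^{2k-2}}\,Q_g(M^2r)$ only when $k=2$, whereas here $k\in\frac12+\ZZ$ (in the paper's application, $k=\frac32$).

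For comparison, the paper's proof works one step earlier than your main route, at the level of the regularized Eichler integral representation (\ref{mockquantum2}) combined with Lemma \ref{Mtimes} — your cross-check is essentially that argument with the limit evaluated — and it reaches the stated constant only because it uses the identity $\xi_{2-k}(h)(z)=\xi_{2-k}(g)(M^2z)$ \emph{without} the factor $M^{2k-4}$, which contradicts your (correct) chain rule; equivalently, the stated constant would be correct for the normalization $h(z)=M^{-(2k-2)}g(M^2z)$. The defect in your proposal, then, is that you assert the bookkeeping closes when in fact it does not: your ingredients prove the lemma with constant $\frac{1}{M^2}$ and thereby expose an inconsistency with the statement as printed. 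Had you carried out the arithmetic you flagged as the main obstacle, you would have been forced to resolve this discrepancy explicitly. (The difference is harmless for the paper's only use of the lemma, in the proof of Theorem \ref{determine}, where only the vanishing $Q_h(r)=0$ is needed.)
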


\begin{proof} [\bf Proof of Lemma \ref{Heckeoperator}]
Note that by (\ref{mockquantum2}), $Q_{h}(r)$ is equal to
\begin{eqnarray} \label{Hecke1}
&&\lim_{t\to 0} \biggl[ \frac{-1}{(-2i)^{k-1}} \overline{\int^{\reg}_{[i\infty, \gamma(i\infty)]} \xi_{2-k}(h)(\tau)\left( \tau- \overline{\gamma\left( x+\frac{1}{c^2t}i\right)}\right)^{k-2}d\tau}\\
\nonumber &&-\frac{\chi^{-2k}_{\theta}(\gamma)\overline{b(0)}}{2^{k-1}(k-1)}\left( c\left(x+\frac{1}{c^2t}i\right)+d\right)^{2-k} \left(\frac{1}{c^2t} + \frac{1}{c} \right)^{k-1}\biggr],
\end{eqnarray}
where $\gamma = \sm a&b\\c&d\esm\in \Gamma_0(4NM^2)$ such that $r = \gamma(i\infty)$, and $c>0$ and $b(0)$ is the constant term of $\xi_{2-k}(h)$.
Since we have
\[
\xi_{2-k}(h)(z) = \xi_{2-k}(g)(M^2z),
\]
by Lemma \ref{Mtimes}, (\ref{Hecke1}) is equal to
\begin{eqnarray} \label{Hecke2}
\lim_{t\to 0} &&\biggl[ \frac{-1}{(-2i)^{k-1}} \overline{\int^{\reg}_{[i\infty, \gamma(i\infty)]} \xi_{2-k}(g)(M^2 \tau)\left( \tau- \overline{\gamma\left( x+\frac{1}{c^2t}i\right)}\right)^{k-2}d\tau}\\
\nonumber &&-\frac{\chi^{-2k}_{\theta}(\gamma)\overline{b(0)}}{2^{k-1}(k-1)}\left( c\left(x+\frac{1}{c^2t}i\right)+d\right)^{2-k} \left(\frac{1}{c^2t} + \frac{1}{c} \right)^{k-1}\biggr]\\
\nonumber =\lim_{t\to 0} &&\biggl[ \frac{-1}{M^{2k-2}(-2i)^{k-1}} \overline{\int^{\reg}_{[i\infty, \gamma'(i\infty)]} \xi_{2-k}(g)(\tau)\left( \tau- \overline{\gamma'\left( M^2 x+\frac{1}{(c')^2(M^2t)}i\right)}\right)^{k-2}d\tau}\\
\nonumber &&-\frac{\chi^{-2k}_{\theta}(\gamma)\overline{b(0)}}{M^{2k-2}2^{k-1}(k-1)}\left( c'\left(M^2x+\frac{1}{(c')^2(M^2t)}i\right)+d\right)^{2-k} \left(\frac{1}{(c')^2(M^2t)} + \frac{1}{c'} \right)^{k-1}\biggr],
\end{eqnarray}
where $\gamma' = \sm a& M^2b\\ c/M^2 & d\esm\in \Gamma_0(4NM^2)$ and $c' = c/M^2$.
Since $\gamma'(i\infty) = M^2r$, we see that (\ref{Hecke2}) is equal to $\frac{1}{M^{2k-2}} Q_g(M^2r)$.
\end{proof}

We are ready to prove Theorem \ref{determine}.

\begin{proof} [\bf Proof of Theorem \ref{determine}]
For $N\in\NN$, let
\[
C_N = \{ r\in\QQ\ |\ \text{$r$ is equivalent to $i\infty$ under the action of $\Gamma_0(4N)$}\}.
\]
Since $Q_{\hat{H}(f)}(r) = Q_{\hat{H}(g)}(r)$ for all $r\in C_N$, we have $Q_{\hat{H}(f-g)}(r) = 0$ for all $r\in C_N$.
We consider
\[
h(z) := \frac{1}{M^2} \hat{H}(f-g)(M^2z).
\]
Then, by Lemma \ref{Heckeoperator}, we have
\[
Q_h(r) = \frac{1}{M}Q_{\hat{H}(f-g)}(M^2r) =0
\]
for $r\in C_{NM^2}$.
Note that we have $Q_{h-\hat{H}(f-g)}(r) = 0$ for all $r\in C_{NM^2}$ and the constant term of $\xi_{2-k}(h-\hat{H}(f-g))$ vanishes.
If $\gamma'\in \Gamma_0(4NM^2)$, then, by Corollary \ref{limitperiod}, we obtain
\[
\lim_{t\to\infty}
\psi(h-\hat{H}(f-g))_{\gamma'}(\gamma(x+it)) = (Q_{h-\hat{H}(f-g)} - Q_{h-\hat{H}(f-g)}|_{2-k}\gamma')(r) = 0
\]
for $r\neq \gamma'^{-1}(i\infty)$.
Note that by Lemma \ref{periodfunction} we have
\[
\psi(h-\hat{H}(f-g))_{\gamma'}(z) = \frac{1}{(-2 i)^{k-1}} \overline{\int^{\reg}_{[\gamma'^{-1}(i\infty), i\infty]} \xi_{2-k}(h-\hat{H}(f-g))(\tau)(\tau-\overline{z})^{k-2}d\tau}
\]
for $z\in\HH$.
We denote $\gamma'$ by $\sm a'&b'\\c'&d'\esm$.
This regularized integral is well defined for $z = x+iy$ such that $y>-y_1$ and $\frac{y}{|c'z+d'|^2} > -\frac{y_1}{|c'z_1+d'|^2}$ for a fixed complex number $z_1 = x_1 + iy_1 \in \HH$ by Lemmas \ref{generalpoint} and \ref{welldefinedforcusp}.
Moreover, we can see that it is holomorphic at $z$ with  $y>-y_1$ and $\frac{y}{|c'z+d'|^2} > -\frac{y_1}{|c'z_1+d'|^2}$ by using the same argument as in the proof of Lemma \ref{coboundary}.
By the identity theorem, we have  $\psi(h-\hat{H}(f-g))_{\gamma'} = 0$ for all $\gamma'\in \Gamma_0(4NM^2)$.
This means that $(h-\hat{H}(f-g))^-$ is a harmonic weak Maass form in $H_{2-k}(\Gamma_0(4NM^2))$.
By Theorem \ref{nonexistence}, we have $(h-\hat{H}(f-g))^- = 0$, and hence $\xi_{2-k}(h-\hat{H}(f-g))=0$.
This implies that $\xi_{2-k}\hat{H}(f-g)$ does not have a principal part since the order of the pole of $\xi_{2-k}(h)$ is $M^2$ times the order of the pole of $\xi_{2-k}\hat{H}(f-g)$ in the Fourier expansion at the cusp $i\infty$.

Note that $f,g \in M^!_{0}(\Gamma_0^*(N))$ and $\Gamma_0^*(N)$ has only one cusp $i\infty$.
If $f-g$ has a Fourier expansion as in (\ref{Fourierinfinite}), then, by (\ref{shadowW}), we have
\[
\sum_{n<0}{a(dn)} = 0
\]
for every $d>0$ since $W(f-g) = -\frac12 \xi_{2-k}\hat{H}(f-g)$ does not have a principal part in the Fourier expansion at the cusp $i\infty$.
This implies that  the weakly holomorphic modular function
 $f-g$ does not have a pole at $i\infty$.
Therefore,  $f-g$ is a holomorphic function on a compact Riemann surface, so it is a constant function.
By assumption, the constant terms are zero in the Fourier expansions of $f$ and $g$ at the cusp $i\infty$.
Therefore, $f-g$ has a vanishing constant term, and hence $f-g = 0$.
This implies that
$\mathrm{Tr}_d(f) = \mathrm{Tr}_{d}(g)$  for each integer $d$.
\end{proof}

\section*{Acknowledgements}
The authors appreciate Roelof Bruggeman for his helpful comments which improved the exposition of this paper a lot.

 
\end{document}